\RequirePackage{fix-cm}
\documentclass{svjour3}                     
\smartqed  
\usepackage{graphicx}

\usepackage{layout,yfonts, graphicx, amsmath, amssymb,euscript, enumerate, enumitem, bbold, bbm, mathrsfs,mathtools}
\usepackage{color}
\usepackage{xurl}

\numberwithin{equation}{section}

\DeclareMathAlphabet{\mathpzc}{OT1}{pzc}{m}{it}

\DeclareMathOperator{\Lp}{L}
\DeclareMathOperator{\trans}{T}
 
\DeclareMathOperator{\loc}{loc}
\DeclareMathOperator{\hol}{C}

\DeclareMathOperator{\sob}{W}

\DeclareMathOperator{\expo}{e}

\newcommand{\tmn}{\mathpzc{n}}
\newcommand{\tmp}{\mathpzc{p}}

\newcommand{\blue}{\textcolor[rgb]{0.00,0.00,1.00}}
\newcommand{\E}{\mathbbm{E}}

\newcommand{\set}{\mathcal{O}}

\newcommand{\R}{\mathbbm{R}}

\newcommand{\uno}{\mathbbm{1}}
\newcommand{\der}{\mathrm{d}}
\newcommand{\F}{\mathbbm{F}}
\newcommand{\Pro}{\mathbbm{P}}
\newcommand{\BE}{\begin{equation}}
	
	\newcommand{\EE}{\end{equation}}
\newcommand {\BA}{\begin{align}}
	\newcommand{\EA}{\end{align}}
\newcommand{\eqdef}{\raisebox{0.4pt}{\ensuremath{:}}\hspace*{-1mm}=}

\DeclareMathOperator{\Cof}{Cof}

\newcommand{\parcial}[2]{\frac{\partial #1}{\partial #2}}

\newtheorem{prop}{Proposition}[section]{\bfseries}{\itshape}
\spnewtheorem{theor}[prop]{Theorem}{\bfseries}{\itshape}
\newtheorem{corol}[prop]{Corollary}{\bfseries}{\itshape}
\newtheorem{rem}[prop]{Remark}{\bfseries}{\itshape}
\newtheorem{lema}[prop]{Lemma}{\bfseries}{\itshape}

\journalname{AMOP}
\allowdisplaybreaks

\begin{document}

\title{Optimal extraction   with an impact on  {diffusion-jump} pricing
}


\author{Johanna Garz\'on       \and
        Jhonatan S. Mora Rodr\'iguez \and
        Harold A. Moreno-Franco
}


\institute{J. Garz\'on \at
              Departamento de Matem\'aticas, Facultad de Ciencias, Universidad Nacional de Colombia, Ave Cra 30 No. 45-3, Bogot\'a, Colombia \\
              \email{mjgarzonm@unal.edu.co}           
           \and
           J.S. Mora Rodr\'iguez \at
              Departamento de Matem\'aticas, Facultad de Ciencias, Universidad Nacional de Colombia, Ave Cra 30 No. 45-3, Bogot\'a, Colombia\\
              \email{jsmoraro@unal.edu.co}
             \and
             H. A. Moreno-Franco \at
             Department of Statistics and Data Analysis, Laboratory of Stochastic Analysis and its Applications,  National Research University Higher School of Economics, 11 Pokrovsky Bulvar, Moscow, Federation of  Russia.\\
             \email{hmoreno@hse.ru}
}

\date{Received: date / Accepted: date}

\maketitle

\begin{abstract}
We study an optimal extraction problem where the agent's actions in the spot market exert an additive proportional  negative impact on the commodity price.  The commodity price dynamics,  prior to any activity by the agent, are evolved by a drifted Brownian motion with jumps.  The agent's primary aim is to identify an optimal extraction strategy that maximizes their expected net profits.
\keywords{Optimal execution \and  commodities \and    diffusion-jump process \and   HJB equation }
\subclass{93E20 \and 91B70 \and  60J75}
\end{abstract}

\section{Introduction}

In recent years, certain commodities in the spot market have encountered negative prices,  leading to significant challenges for trading sectors. A notable example is the historic occurrence of negative oil prices for the US oil benchmark, West Texas Intermediate (WTI). On April $20^{\text{th}}$ 2020,  the price dropped dramatically from $\$17.85$ at the beginning of the trading day to a staggering $-\$37.63$ at the close, marking the largest one-day drop for US crude  {oil} in history.  As the expiration date for the May contract approached, traders faced the challenge of acquiring physical barrels of oil. This difficulty was compounded by storage facilities reaching their maximum capacity. Concurrently,  the June contract experienced increased trading volumes, rising to \$21 and creating a super-contango futures market. The large spread between the two contracts caused traders to refrain from rolling, holding, or accepting delivery. Consequently, the May contracts were sold off, pushing  {the price} down  even further and increasing bearish market sentiment. For more detailed information, refer to \cite{GRI2020}.

Another case where the commodity price attained negative  values and producers were forced to sell it for nothing, or less than nothing, is the natural gas of Alberta (Canada) on October $5^{\text{th}}$ and $9^{\text{th}}$, and September $25^{\text{th}}$, 2017.  This unfortunate scenario was repeated  on May $4^{\text{th}}$, 2018.  For more details, see \cite{F2017,US2018}.

Following the dip in the US benchmark oil price below zero, it has become standard practice for trading parties to include clauses in their contracts that necessitate negotiations to modify the agreement terms in case of price crashes or other market disruptions, including regulatory changes. For more information, refer to \cite{HD2021}. These clauses come in two primary types:
\begin{itemize}
	\item The first establishes a zero-price floor, meaning the seller will not be required to pay the buyer if the price is negative. Alternatively, the seller may offer a predetermined lump sum price floor to provide nominal consideration to the buyer.
	
	\item The second type of clause stipulates that the seller must pay the buyer to accept the commodities if the price is negative.
\end{itemize}
Additionally, on October $7^{\text{th}}$, 2020, the International Swaps and Derivatives Association (ISDA) established similar model clauses, which the parties could integrate into their commodity trading documentation; see \cite{I2020,S2020}.

The behaviour of commodity prices in the spot market has led several authors to propose alternative stochastic price models, diverging from the log-price model, to better capture the negative aspects of commodity prices.  For instance, the Ornstein-Uhlenbeck process has been applied to model the price of various commodities. Chaiyapo and Phewchean \cite{CP2017} applied this process to model the Thai commodity market, while Trasierra and Carrasco \cite{TC2016} used it to model gold spot prices for evaluating a mine investment project using the real options approach in Peru. Additionally, the Ornstein–Uhlenbeck process has also been employed in the  {electric power} sector, with Lucia and Schwartz \cite{LS2002} utilizing it to model electricity prices. While the literature justifies the use of stochastic processes with jumps to model commodity prices  (see, for instance,  \cite{BG2020,HLY2012}), as far as we are aware, there is currently no research that has implemented alternative models, apart from the log-price, using stochastic processes with jumps.

Therefore, considering the various real-life scenarios described earlier and the limited information about alternative price models of commodities that involve stochastic processes with jumps beyond the log-price model, the primary aim of the paper is to analyse an optimal extraction problem with an impact on the price, where  the dynamics of the commodity price, before any agent activity, follow  a drifted Brownian motion with jumps, as shown in \eqref{eq1}. In this context, an agent in the spot market seeks to sell a substantial initial quantity of a commodity at any time, with decisions made having an additive proportional negative impact on the commodity price. Furthermore, the agent's decisions align with the first clause mentioned earlier, where there is no obligation to sell when the commodity price is negative or zero. Additionally,  any action taken by the agent does not affect the commodity price until it reaches either  negative or zero values. The   main goal of the agent is to find an optimal extraction rule that maximizes their expected net profits, as illustrated in \eqref{fV}.  

Since 1931, when Hotelling first addressed the topic of optimal extraction (without considering any impact on the price) in his seminal paper \cite{H1931}, many authors have delved into this area. In his work, he introduced a deterministic price model to explore an optimal extraction problem. Building upon similar assumptions regarding commodity prices, subsequent research on optimal extraction was conducted by Hanson \cite{H1980}, Lin and Wagner \cite{LW2007}, Pindyck \cite{P1978}, as well as Solow and Wan \cite{SW1976}. Furthermore, Pindyck \cite{P1980}, Ferrari and Yang \cite{FY2018}, Pemy \cite{P2022}, and Zhang et. al. \cite{ZZZ2023} have also investigated variations of the optimal extraction problem. Specifically, Pindyck \cite{P1980} modeled commodity prices as a geometric Brownian motion, Ferrari and Yang \cite{FY2018} utilized a Brownian motion with regime switching, Pemy \cite{P2022} assumed the log-price of the commodity follows an Ornstein-Uhlenbeck process with jumps and regime switching, and Zhang et al. \cite{ZZZ2023} considered in their model that the price follows a geometric Brownian motion with regime switching.

More recently, Ferrari and Kock \cite{FK2021} have investigated an optimal extraction problem with an additive proportional negative impact, considering the dynamics of the commodity price are governed by an Ornstein-Uhlenbeck process prior to any agent activity. Similarly,  Kock addressed  this problem in his  PhD thesis \cite{K2020}, taking  the underlying process as  a standard Brownian motion with drift. In a similar vein, literature addressing this topic explores scenarios where the agents' decisions exert a multiplicative influence on prices; for instance, refer to \cite{AZ2017,GZ2015,HMP2019}.

We would like to highlight that our problem falls within the framework of singular stochastic control. Specifically, the first component of our two-dimensional controlled process is characterized by a controlled Brownian motion that includes both drift and jumps, effectively modelling the commodity price at time $t$. The second component represents the quantity of commodities held by the agent at that same moment. Both components are influenced by the agent's decisions at time $t$, which are governed by a singular stochastic control strategy.

For those are interested in related literature, we recommend consulting \cite{KM2018,M2018,Z1992}. However, due to the unique aspects of our problem, we explicitly derive the optimal strategy and the value function. This contrasts with the previously mentioned literature, where the authors prove the existence and uniqueness of solutions to the Hamilton-Jacobi-Bellman (HJB) equation associated with  their  value function. They subsequently validate these solutions through a verification result indicating they coincide with the value function. Moreover, in their approaches, the existence of an optimal strategy is not always explicitly discussed.

To find an optimal extraction strategy, we will first establish the dynamic variational inequality that governs the value function for our problem, as shown in \eqref{eqHJB}.  Subsequently, by a verification theorem (see  Proposition \ref{teo1}), we will derive necessary conditions on the solution  to the HJB equation,  to determine the optimal extraction strategy.   Our analysis  will then focus on implementing straightforward barrier strategies, where  selling/extraction decisions are based on  comparing the remaining number of commodities with a certain proportion of the difference between a fixed level ${b^{\star}}$ and the commodity price. By solving the HJB equation associated with our problem, and employing  `smooth fit' arguments,  the optimal extraction strategy will be explicitly defined throughout a barrier strategy. This strategy can be outlined as follows: if the commodity price is below a level $b^{\star}$, it is optimal not to sell any commodity. Moreover, if the commodity price exceeds the level $b^{\star}$, it is optimal either to sell off  all available commodities or vend a portion of them, resulting in a reduction in the commodity price.

The remainder of this document is structured as follows. In Section \ref{SectionFP}, we outline the optimal extraction problem. In Section \ref{sec3}, we deduce the HJB equation \eqref{eqHJB} linked with the value function defined in \eqref{fV}. Next, in Subsection \ref{subsectionVT}, we establish a verification theorem; refer to Proposition \ref{PFV}. Moving on to Section \ref{S4}, we introduce the barrier strategies and explicitly determine their expected net profits, as shown in \eqref{v1.1}. The latter satisfies the HJB equation, as shown in Theorem \ref{teo1}, which is detailed and proven in Section \ref{S5}. {Section \ref{stop1} explores the connection to an optimal stopping problem. A sensitivity analysis is conducted in Section \ref{comp1}. Section \ref{S8} summarizes the main contributions
	of our paper and discusses directions for further work.}   Lastly, the justification of some key outcomes overlooked in the text and other supplementary results are provided in Appendix.

\section{Formulation of the problem}\label{SectionFP}

We consider an agent in the spot market whose sole objective is to sell  $y>0$ initial large units of a commodity  at any time $t\geq0$. Each selling interaction of the agent  in the market will be reflected as an additive proportional negative impact on the commodity's spot  price.  When the agent refrains from participating in  transactions within  the market, the price of the commodity $X^{0}=\{X^{0}_{t}:t\geq0\}$ is modelled by the following drifted Brownian motion with  jumps 
\begin{equation}\label{eq1}
	X_{t}^{0}=x+\mu t+\sigma W_t-S^{\tmn}_{t}+S^{\tmp}_{t} \quad \text{for}\ t\ge 0,
\end{equation}
where $x\in\R$ is the initial  commodity price, $\mu\in\R$ and $\sigma>0$. Here $W=\{W_{t}:t\geq0\}$ is a Brownian motion, and $S^{u}=\{S^{u}_{t}:t\geq0\}$, with $u\in\{\tmp,\tmn\}$, is given by   $S_{t}^{u}\eqdef\sum_{i=1}^{N_{t}^{u}}Z_{i}^{u}\quad\text{for}\ t\geq0$,
where $N^{\tmn}=\{N^{\tmn}_t: t\ge 0\}$, $N^{\tmp}=\{N^{\tmp}_t: t\ge 0\}$  are independent Poisson processes  with rates  $\lambda_{\tmn}$ and $\lambda_{\tmp}$, respectively.  Here, $\{Z^{\tmn}_{i}\}_{i\geq 1}$, $\{Z^{\tmp}_{i}\}_{i\geq 1}$ are sequences of i.i.d. non-negative random variables which are also independent of $N^{u}$, with $u\in\{\tmn,\tmp\}$.   In order that \eqref{eq1} is well-defined, consider that  $W,\ S^{\tmn},\, S^{\tmp}$ are defined on a filtered  probability space $(\Omega, \mathcal{F}, \mathbbm{F}, \Pro)$, where $\mathbbm{F}=\{\mathcal{F}_{t}\}_{t\geq 0}$ is the right-continuous complete filtration generated by $(W,S^{\tmn},S^{\tmp})$, and the processes $W$, $S^{\tmn}$ and $S^{\tmp}$ are independent.  

Considering  $\mathcal{T}^{u}\eqdef\{T^{u}_{i}\}_{i\geq0}$, with $u\in\{\tmn,\tmp\}$,  as the  arrival times sequence of $N^{u}$, we have that $Z^{u}_{k}$ indicates the jump in the commodity price occurring at time $T^{u}_{k}$ due to extreme shocks like economic, financial, and geopolitical events; for further details, refer to  \cite{BG2020}.  

To capture the asymmetric leptokurtic characteristics observed in commodity pricing, throughout the paper, it will be assumed  that  $Z^{u}_{k}$  follows a  hyper-exponential distribution $P_{u}$, i.e.,
\begin{equation}\label{cond3.0}
	\frac{{ \der}P_{u}(z)}{{ \der}z}=\sum_{k=1}^{m_{u}}\omega_{k}^{u}\beta_{k}^{u}\expo^{-\beta_{k}^{u}z},  \text{ for } z>0.
\end{equation}
Here $m_{u}$ is a positive integer,  $\beta_{k}^{u}>0$ and $\omega_{k}^{u}>0$ for $1\leq k\leq m_{u}$,  satisfying
\begin{equation}\label{cond3}
	0<\beta_{1}^{u}<\beta_{2}^{u}<\cdots<\beta_{m_u}^{u}\quad \text{and}\quad\sum_{k=1}^{m_u}\omega_{k}^{u}=1.
\end{equation}
This kind of assumption has been employed to model jumps in the log-price of assets; see for example \cite{CK2011,RZ2007}.

As previously mentioned in this section, the agent in the spot market possesses a substantial initial inventory $y>0$, which they intend to sell at any given time $t$. Consequently, the accumulated quantity of extracted and sold commodity by time $t$ is represented by an increasing c\`adl\`ag process $\xi=\{\xi_t:t\geq 0\}$ that is $\mathbbm{F}$-adapted, satisfying $\xi_{t}\leq y$ a.s. for $t\geq0$, and $\xi_{0}=0$. Subsequently, the agent's inventory of commodity at time $t$ is described by
\begin{equation}\label{e1}
	Y_t=y-\xi_t\geq 0.
\end{equation}

Since the agent's decisions to extract and sell impact the commodity price negatively, if at time $t>0$ the agent opts to sell a small quantity $\varepsilon>0$, the price of the commodity decreases by a factor $\alpha>0$, expressed as $\Delta X_{t}=X_{t}-X_{t-}=-\alpha \varepsilon-\Delta S^{\tmn}_{t}+\Delta S^{\tmp}_{t}$.
Furthermore, when a significant extraction and sale of commodity $\Delta \xi_t$ occurs, this action can be perceived as a large number $N$  of individual extractions and sales of small size $\varepsilon=\Delta\xi_t/N$ carried out simultaneously. In such instances, the commodity price at time $t$ follows
\begin{equation}\label{e2}
	X_t=X_{t-}-\alpha N\varepsilon-\Delta S^{\tmn}_{t}+\Delta S^{\tmp}_{t}.
\end{equation} 

Based on  the aforementioned considerations, the spot price of the commodity is determined by
\begin{equation}\label{e3}
	X_{t}=x+\mu t+\sigma W_t-S^{\tmn}_{t}+S^{\tmp}_{t}-\alpha \int_{0}^{t}\der \xi_{s}\quad \text{for}\ t\geq0.
\end{equation}

Since there is no obligation to extract and sell when the commodity price is either negative or zero, and any action taken by  the  agent   does not affect the commodity price until it reaches negative values, throughout of the paper, we will consider strategies $\xi$ that satisfy the following
\begin{equation}\label{cont.1}
	\begin{cases}
		\xi\ \text{is adapted to the filtration}\  \F;\\
		\xi_{0}=0\ \text{and}\ \xi_{t}\ \text{is non-decreasing and}\\
		 \text{is right continuous with left hand limits,}\ t\geq0;\\
		\xi_{t}\leq y\ \text{a.s.}\ t\geq0;\ \der \xi_{t}\equiv0\   \text{ if}\ X_{t}\leq0;\\ \text{and }\ \Delta\xi_{t}\eqdef\xi_{t}-\xi_{t-}\leq \frac{X_{t-}}{\alpha}\ \text{if}\ X_{t-}>0.
	\end{cases}
\end{equation}
It will be stated that $\xi$ is  admissible if it satisfies \eqref{cont.1}. The collection of all admissible strategies is denoted by $\mathcal{A}(y)$. 

If $\xi\in\mathcal{A}(y)$, notice that $\xi_{t}=\int_{0}^{t}\der\xi^{c}_{s}+\sum_{0\leq s\leq t}\Delta\xi_{s}$, where $\xi^{c}$ represents the continuous part of $\xi$. Then,   considering $\rho>0$ and $c>0$ as  the discount rate   and the transaction cost, respectively, we get that the  profit for extracting and selling at time $t$, with respect $\der\xi^{c}_{t}$, is given by $\expo^{-\rho t}[X_{t}-c]\der \xi^{c}_{t}$. However, if a large commodity selling occurs at $t$, i.e.,  $\Delta\xi_{t}\neq0$, it gives that  the profit at time $t$ is
\begin{align*}
	&\lim_{N\uparrow\infty}\sum\limits_{j=0}^{N-1}\expo^{-\rho t}(X_{t-}-\Delta S^{\tmn}_{t}+\Delta S^{\tmp}_{t}-c-j\alpha\varepsilon)\varepsilon\\
	&\qquad\qquad=\expo^{-\rho t}\int_{0}^{\Delta\xi_t}(X_{t-}-c-\Delta S^{\tmn}_{t}+\Delta S^{\tmn}_{t}-\alpha u) \der u\notag \\
	&\qquad\qquad=\expo^{-\rho t}\bigg \{[X_{t-}-c-\Delta S^{\tmn}_{t}+\Delta S^{\tmp}_{t}]\Delta\xi_t-\alpha\frac{[\Delta \xi_t]^{2}}{2}\bigg\}.
\end{align*}
{Based on the previously discussion, the total expected profit linked to the strategy $\xi\in\mathcal{A}(y)$ is expressed as}
\begin{align}\label{ENPV}
	\mathcal{J}(x,y,\xi)&\eqdef\E^{x,y}\bigg[\int_{0}^{\tau}\expo^{-\rho t}(X_{t}-c)\der \xi_{t}^{c}\notag\\
	&\quad+\sum\limits_{ 0\leq t\leq \tau}\expo^{-\rho t}\left[\left(X_{t{-}}-\Delta S^{\tmn}_{t}+\Delta S^{\tmp}_{t}-c \right)\Delta \xi_t-\alpha\frac{(\Delta\xi_t)^{2}}{2}\right]\bigg]\notag\\
	&=\E^{x,y}\bigg[\int_{0}^{\tau}\expo^{-\rho t}\left(X_{t}-c\right)\der \xi_{t}+
	\frac{\alpha}{2}\sum_{0\leq t\leq \tau}\expo^{-\rho t} [\Delta\xi_t]^{2}\bigg],
\end{align}
with $\tau=\inf\{t>0:Y_{t}<0\}$ {and $\E^{x,y}$ denotes the  expectation operator conditioned on the initial values $X_{0}=x$ and $Y_{0}=y$. Throughout this paper, the law of $(X,Y)$, when $X_{0}=x\in\R$ and $Y_{0}=y\geq0$, will be denoted by $\Pro^{x,y}$. For simplicity, we use $\Pro\equiv\Pro^{0,0}$ and $\E\equiv\E^{0,0}$ when $X_{0}=Y_{0}=0$.}

Given an initial inventory $y>0$, the  goal of the agent is to find a strategy $\xi^{\star}\in\mathcal{A}(y)$  (if one exists) such that 
\begin{equation}\label{fV}
	\mathcal{J}(x,y,\xi^{\star})=V(x,y):=\sup\limits_{\xi\in\mathcal{A}(y)}\mathcal{J}(x,y,\xi)\quad \text{for}\ x\geq0.
\end{equation}

\section{Preliminary results and HJB equation}
\label{sec3}

In this section, we will deduce the Hamilton-Jacobi-Bellman  equation (HJB) corresponding to the value function $V$ defined in  \eqref{fV}, and we will give a verification theorem. First, we will establish some properties of $V$.

\begin{prop}\label{PFV}
	The value function $V$ given by \eqref{fV} satisfies the following properties:
	\begin{enumerate}[label=(\arabic*),ref=\arabic*]
		\item  $V(x,0)=0$ for  $x\in\R$.
		\item $V(x,y)$ is increasing in  $y\geq 0$ for  $x\in\R$. 
		\item There exists a positive constant $K$ such that  
		\begin{equation}\label{p1V}
			0\leq V(x,y)\leq K y(1+y)(1+|x|) \quad \text{for}\ (x,y)\in \R\times \R^{+}_{*},
		\end{equation}
		where $\R^{+}_{*}\eqdef(0,\infty)$.
	\end{enumerate}
\end{prop}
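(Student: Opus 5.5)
For item (1), observe that when $y=0$ the constraint $0\le\xi_t\le y$ in \eqref{cont.1} together with monotonicity and $\xi_0=0$ forces $\xi\equiv0$. Thus $\mathcal{A}(0)=\{0\}$, both terms in \eqref{ENPV} vanish, and $V(x,0)=0$.

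For item (2), take $0\le y_1<y_2$ and any $\xi\in\mathcal{A}(y_1)$. The same process $\xi$ also satisfies every condition in \eqref{cont.1} with $y_2$ in place of $y_1$, because the bound $\xi_t\le y_1\le y_2$ is the only condition that depends on $y$. The controlled price $X$ in \eqref{e3} does not depend on $y$, so $\mathcal{J}(x,y_1,\xi)=\mathcal{J}(x,y_2,\xi)$. Taking suprema gives $V(x,y_1)\le V(x,y_2)$.

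For item (3), the lower bound $V\ge0$ follows from the admissible choice $\xi\equiv0$, which yields $\mathcal{J}=0$. For the upper bound, fix $\xi\in\mathcal{A}(y)$ and bound the integrand pathwise. Since $\xi$ is nondecreasing, $X_t\le X^0_t$ by \eqref{e3}, so $X_t-c\le |X^0_t|$. The sum term satisfies $\sum_{t}\expo^{-\rho t}(\Delta\xi_t)^2\le (\sum_t\Delta\xi_t)^2\le y^2$. Hence
\begin{equation*}
\mathcal{J}(x,y,\xi)\le \E^{x,y}\Big[\int_0^\infty \expo^{-\rho t}|X^0_t|\,\der\xi_t\Big]+\frac{\alpha}{2}y^2 .
\end{equation*}
To handle the integral, I bound $\expo^{-\rho t}|X^0_t|\le M:=\sup_{t\ge0}\expo^{-\rho t}|X^0_t|$, which gives $\int_0^\infty\expo^{-\rho t}|X^0_t|\,\der\xi_t\le yM$. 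It then remains to show $\E[M]\le K'(1+|x|)$. Writing $|X^0_t|\le |x|+|\mu|t+\sigma|W_t|+S^{\tmn}_t+S^{\tmp}_t$, each piece can be estimated separately:
\begin{itemize}
\item $\sup_t \expo^{-\rho t}|\mu| t<\infty$ is deterministic.
\item For the Brownian part, $\E\sup_t \expo^{-\rho t}|W_t|\le\sum_{n\ge0}\expo^{-\rho n}\E\sup_{s\le n+1}|W_s|\le C\sum_n \expo^{-\rho n}\sqrt{n+1}<\infty$, using Doob's inequality on each interval $[n,n+1]$.
\item For the jump parts, $S^u$ is nondecreasing, so $\sup_t\expo^{-\rho t}S^u_t\le\sum_n \expo^{-\rho n}S^u_{n+1}$. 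Its expectation is $\sum_n\expo^{-\rho n}(n+1)\lambda_u\E Z^u_1<\infty$, which is finite because the hyperexponential law \eqref{cond3.0} has finite mean $\sum_k\omega^u_k/\beta^u_k$.
\end{itemize}
Combining these gives $V(x,y)\le K'y(1+|x|)+\frac{\alpha}{2}y^2\le Ky(1+y)(1+|x|)$. The expectation of the supremum is the only step needing care, and the dyadic-block summation above is the approach I would use; the other steps are immediate from the definitions.
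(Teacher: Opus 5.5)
Your proof is correct. For items (1) and (2), the lower bound, and the reduction of the upper bound to $\E^{x}\bigl[\sup_{t\geq0}\expo^{-\rho t}|X^{0}_{t}|\bigr]\leq C(1+|x|)$, it follows the paper's route. The genuine difference is how you prove this maximal estimate.

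The paper integrates $\expo^{-\rho t}X^{0}_{t}$ by parts. It then controls three pieces: the discounted Brownian integral with Burkholder--Davis--Gundy (its quadratic variation is bounded by $1/(2\rho)$), the discounted jump mass $\int_{0}^{\infty}\expo^{-\rho s}\der S^{u}_{s}$ with the compensator of the Poisson random measure, and a residual term $\rho\int_{0}^{\infty}\expo^{-\rho t}\E^{x}[|X^{0}_{t}|]\der t$, whose linear growth in $|x|$ it asserts without proof.

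Your blocking over $[n,n+1]$, with Doob's $L^{2}$ inequality on each block and the monotonicity of $S^{u}$, is more elementary. It needs no semimartingale integration by parts, no random-measure calculus, and no auxiliary bound on that residual integral. It also extends directly to the second-moment bound the paper invokes later in \eqref{e9.1}. The paper's route, in exchange, gives clean time-uniform constants such as $C_{1}/\sqrt{2\rho}$ and $\lambda_{u}\E[Z^{u}_{1}]/\rho$.

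A smaller difference: the paper bounds $|\mathcal{J}|$ two-sidedly via $|X_{t}|\leq|X^{0}_{t}|+\alpha y$, which shows $\mathcal{J}(x,y,\xi)$ is finite for every admissible $\xi$; this fact is reused in \eqref{e8.1}. Your one-sided bound via $X_{t}\leq X^{0}_{t}$ suffices for the statement, and the same estimate gives the other side if needed.

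In (2), state explicitly that $\tau=\infty$ for every admissible strategy. That is what makes $\mathcal{J}$ independent of $y$, not only the fact that $X$ does not depend on $y$.
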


\begin{proof}
	\begin{enumerate}[label=(\arabic*),ref=\arabic*]
		\item When $y=0$, it  is the scenario where the agent does not have to sell anything. In this case we have  $\xi\equiv0$ and $\mathcal{A}(0)=\{\xi\}$. Consequently, $V(x,0)=0$ for $x\in\R$.
		\item Let $0\leq y_1\leq y_2$.  By \eqref{cont.1}, it is easy to check that $\mathcal{A}(y_1)\subset\mathcal{A}(y_2)$. Hence,  $V(x,y_1)\leq V(x,y_2)$ for  $x\in\R$.
		\item Since for each $x\in\R$ fixed, the function $y\mapsto V(x,y)$ is increasing on $[0, \infty)$, it is  immediate that   $V(x,y)\ge 0$ for $(x,y)\in \R\times \R^{+}_{*}$. Now, consider  $x\in\R$, $y>0$ and  $\xi\in\mathcal{A}(y)$. Using \eqref{e2}--\eqref{e3}, it gives that 
		\begin{align}
			|\mathcal{J}(x,y,\xi)|
			&= \bigg|\E^{x,y}\bigg[\int_{0}^{\tau}\expo^{-\rho t}\left(X_{t}-c\right)\der \xi_{t}+
			\frac{\alpha}{2}\sum_{0\leq t\leq\tau}\expo^{-\rho t} [\Delta\xi_t]^{2}\bigg]\bigg| \notag\\
			&\leq[c+\alpha y]\E^{x,y}\bigg[\int_{0}^{\infty}\expo^{-\rho t}\der \xi _t\bigg]\notag\\
			&\quad+\frac{\alpha}{2}\E^{x,y}\Bigg[\sum_{t\geq0}\expo^{-\rho t}[\Delta\xi_t]^{2}\Bigg]+\E^{x,y}\bigg[\int_{0}^{\infty}\expo^{-\rho t}|X^{0}_{t}|\der \xi_t\bigg].\notag
		\end{align}
		Observe that $\int_{0}^{\infty}\der \xi_t\leq y$, due to  \eqref{e1}.  It implies that 
		\begin{equation}\label{pVd1}
			|\mathcal{J}(x,y,\xi)|\leq  y\bigg\{c+\E^{x}\bigg[\sup_{t\geq0}\{\expo^{-\rho t}|X^{0}_{t}|\}\bigg]\bigg\}+ \frac{3}{2}\alpha y^{2}.
		\end{equation}
		On the other hand, applying integration by parts on $\{\expo^{-\rho t}X_{t}^{0}:t\geq0\}$ (see \cite[Corollary 2, p. 68]{P2005}) and taking into account \eqref{eq1}, we have that 
		\begin{align*}
			|\expo^{-\rho t}X_{t}^{0}|&\leq |x|+|\mu|+\rho \int_{0}^{t}\expo^{-\rho s}|X_{s}^{0}|\der s\notag\\
			&\quad+\sigma\bigg|\int_{0}^{t}\expo^{-\rho s}\der W_s\bigg|+\int_{0}^{t}\expo^{-\rho s}\der S^{\tmn}_{s}+\int_{0}^{t}\expo^{-\rho s}\der S^{\tmp}_{s}.
		\end{align*}
		Then,
		\begin{align}\label{e6}
			&\E^{x}\left[\sup\limits_{t\geq0} \expo^{-\rho t}|X_{t}^{0}|\right]
			\leq |x|+|\mu|+\rho \int_{0}^{\infty}\expo^{-\rho t}\E^{x}\left[|X_{t}^{0}|\right]\der t\notag\\
			&+\E\bigg[\int_{0}^{\infty}\expo^{-\rho s}\der S^{\tmn}_{s}\bigg]+\E\bigg[\int_{0}^{\infty}\expo^{-\rho s}\der S^{\tmp}_{s}\bigg]+\sigma \E\bigg[\sup\limits_{t\geq0}\bigg|\int_{0}^{t}\expo^{-\rho s}\der W_s\bigg|\bigg].
		\end{align}
		Considering $\widetilde{N}^{u}$, with $u\in\{\tmn,\tmp\}$, as a random Poisson measure on  the space  
		\begin{equation}\label{e6.0}
			([0, \infty) \times [0,\infty), \mathcal{B}([0, \infty)) \times \mathcal{B}([0,\infty)), \der t \times \Pi_{u}(\der z)),
		\end{equation}
		where $\mathcal{B}([0,\infty))$ is the Borel sigma algebra on $[0,\infty)$ and $\Pi_{u}(\der z)\eqdef\lambda_{u}\der P_{u}(z)$, we get that  
		$\int_{0}^{t}\expo^{-\rho s}\der S^{u}_{s}=\int_{[0,t]}\int_{\{z\geq0\}}\expo^{-\rho s}z\widetilde{N}^{u}(\der s\times\der z).$
		By Theorem 2.7 in \cite{K2014}, we have that 
		\begin{align}\label{e4}
			\E\bigg[\int_{0}^{t}\expo^{-\rho s}\der S^{u}_{s}\bigg]&=\E\bigg[\int_{[0,t]}\int_{\{z\geq0\}}\expo^{-\rho s}z\widetilde{N}^{u}(\der s\times\der z)\bigg]\notag\\
			&=\frac{\lambda_{u}}{\rho}[1-\expo^{-\rho t}]\E[Z^{u}_{1}].
		\end{align}	
		Meanwhile, by Burholder-Davis-Gundy's inequality; see \cite[Theorem 3.28, p. 166]{KS1991}, it is well known that there exists a positive constant $C_{1}$ independent of time such that
		\begin{equation}\label{e5}
			\E\bigg[\sup\limits_{0\leq p\leq t}\bigg|\int_{0}^{p}\expo^{-\rho s}\der W_s\bigg|\bigg]\leq \frac{C_{1}}{[2\rho]^{1/2}}[1-\expo^{-2\rho t}]^{1/2}.
		\end{equation}
		Then, letting $t\rightarrow\infty$ in \eqref{e4}--\eqref{e5}, and by monotone convergence theorem, it follows that 
		\begin{equation}\label{e7}
			\E\bigg[\int_{0}^{\infty}\expo^{-\rho s}\der S^{u}_{s}\bigg]=\frac{\lambda_{u}}{\rho}\E[Z^{u}_{1}]\ \text{and}\  	\E\bigg[\sup\limits_{0\leq t}\bigg|\int_{0}^{t}\expo^{-\rho s}\der W_s\bigg|\bigg]\leq \frac{C_{1}}{[2\rho]^{1/2}}.
		\end{equation}	
		Using \eqref{e7} in \eqref{e6}, we get that  there  exists a constant $C_2>0$ such that
		\begin{equation}\label{pVd4}
			\E^{x}\bigg[\sup\limits_{t\geq0} \expo^{-\rho t}|X_{t}^{0}|\bigg]\leq C_2(1+|x|),
		\end{equation}
		due to $\int_{0}^{\infty}\expo^{-\rho t}\E^{x}\left[|X_{t}^{0}|\right]\der t\leq C_{3}[1+|x|]$ for some positive constant $C_{3}$. Finally, applying    \eqref{pVd4}  in \eqref{pVd1},    it is easy to check that \eqref{p1V} holds.\qed
	\end{enumerate}
\end{proof}

\subsection{Derivation of the HJB equation}
We will now derive the HJB equation related to the value function $V$. To begin, we introduce Lemma \ref{Exp1} to aid in this derivation and to establish the main result of this section, which is the verification theorem; see Proposition \ref{TV}. The proof of Lemma \ref{Exp1} is available in Appendix  \ref{A}.
\begin{lema}\label{Exp1}
	Let $f$ be a real valued function such that 	$\hol^{2,1}(\R\times\R^{*}_{+})$, which satisfies 
	\begin{align}\label{l1}
		0\leq f(x,y)\leq K y(1+y)(1+|x|) \quad \text{for}\ (x,y)\in \R\times \R^{+}_{*},
	\end{align}
	for some $K>0$. Then, for each $\xi\in\mathcal{A}(y)$,
	\begin{align}
		\label{ETV10.1}
		&\E^{x,y}[\expo^{-\rho t}f\left(X_{t},Y_{t}\right)-f(x,y) ]\notag\\
		&\qquad=\E^{x,y}\bigg[\int_{0}^{t}\expo^{-\rho s}\mathcal{D}f(X_s,Y_s)\der s-\sum_{0\leq s\leq t}\expo^{-\rho s}   \mathcal{I}[s;X,Y,f]\notag\\
		&\quad\qquad-\int_{0}^{t}\expo^{-\rho s}\left[\alpha \frac{\partial}{\partial x}f(X_{s},Y_s)+\frac{\partial}{\partial y}f(X_{s},Y_s)\right]\der \xi_s^{c}\bigg], 
	\end{align}
	where 
	\begin{align}
		\mathcal{I}[t;X,Y,f]&\eqdef 
		f(X_{s-}-\Delta S^{\tmn}_{s}+\Delta S^{\tmp}_{s}-\alpha \Delta \xi_s,Y_{s-}-\Delta\xi_s)\notag\\
		&\quad-f(X_{s-}-\Delta S^{\tmn}_{s}+\Delta S^{\tmp}_{s},Y_{s-})\notag\\
		&=\int_{0}^{\Delta \xi_t}(\alpha \frac{\partial}{\partial x}f(X_{t-}-\Delta S^{\tmn}_{t}+\Delta S^{\tmp}_{t}-\alpha u,Y_{t-}-u) \notag \\
		&\quad+f_{y}(X_{t-}-\Delta S^{\tmn}_{t}+\Delta S^{\tmp}_{t}-\alpha u,Y_{t-}-u))\der u,\label{g1}\\
		\mathcal{D}f(x,y)&\eqdef\frac{\sigma^{2}}{2}\frac{\partial^{2}}{\partial x^{2}}f(x,y)+\mu \frac{\partial}{\partial x}f(x,y)-(\rho+\lambda_{\tmn}+\lambda_{\tmp})f(x,y)\notag\\
		&\quad+\lambda_{\tmn}\int_{0}^{\infty}f(x-z,y)\der P_{\tmn}(z)+\lambda_{\tmp}\int_{0}^{\infty}f(x+z,y)\der P_{\tmp}(z).\label{generador}
	\end{align}
\end{lema}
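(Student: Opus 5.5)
The plan is to apply the Itô formula for general semimartingales (Meyer--Itô, e.g.\ \cite[Theorem 32, Ch.~II]{P2005}) to the process $\expo^{-\rho t}f(X_t,Y_t)$. Here $X$ is given by \eqref{e3} and $Y$ by \eqref{e1}, so both are semimartingales. The continuous local martingale part of $X$ is $\sigma W$ and $Y$ has no continuous martingale part. Hence $[X,X]^c_t=\sigma^2 t$, and only the second derivative in $x$ enters. Since $f$ is only $\hol^{2,1}$ on $\R\times\R^{+}_{*}$, I would first work on the event where $Y$ stays positive, or localize with $\tau_n=\inf\{t: |X_t|\geq n \text{ or } Y_t\leq 1/n\}\wedge n$, and handle the boundary $y=0$ at the end. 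I also expect to use a mollification of $f$ in $y$ near $0$, using $f(x,0^+)=0$, which follows from \eqref{l1}.

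Applying the formula up to $t\wedge\tau_n$ gives
\[
\expo^{-\rho t}f(X_t,Y_t)-f(x,y)=\int_0^t \expo^{-\rho s}\Big(-\rho f+\mu f_x+\tfrac{\sigma^2}{2}f_{xx}\Big)(X_{s-},Y_{s-})\der s+\sigma\int_0^t \expo^{-\rho s}f_x\der W_s-\int_0^t\expo^{-\rho s}(\alpha f_x+f_y)\der\xi^c_s+\sum_{s\leq t}\expo^{-\rho s}\Delta f(X_s,Y_s).
\]
At a jump time, $\Delta X_s$ combines the Poisson jumps with $-\alpha\Delta\xi_s$. Common jumps of $N^{\tmn}$ and $N^{\tmp}$ occur with probability zero, and we may assume the control jumps at most after the Poisson jump, as in the ordering of \eqref{g1}. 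I would then split $\Delta f$ telescopically into two pieces. The first is $f(X_{s-}-\Delta S^{\tmn}_s+\Delta S^{\tmp}_s,Y_{s-})-f(X_{s-},Y_{s-})$, which is the Poisson part. The second is the control part, which equals $-\mathcal{I}$ up to the sign convention in \eqref{g1}. The fundamental theorem of calculus along the segment $u\mapsto(\cdot-\alpha u,Y_{s-}-u)$ gives the integral representation in \eqref{g1}.

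Next I would compensate the Poisson part. I write it as an integral against the Poisson random measures $\widetilde N^{u}$ of \eqref{e6.0} and subtract the compensators $\lambda_{\tmn}\int(f(x-z,y)-f(x,y))\der P_{\tmn}(z)\der s$ and the analogous $\tmp$ term. Combining this with the drift term produces exactly $\mathcal{D}f$ in \eqref{generador}. What remains are two local martingales: the stochastic integral against $W$ and the compensated jump integral.

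The main obstacle is taking expectations, that is, showing these local martingales are true martingales and passing to the limit $n\to\infty$. On $[0,\tau_n]$ everything is bounded, so the stopped expectations vanish. To remove the localization I would use the growth bound \eqref{l1} together with $Y_t\le y$. This gives $|f(X_t,Y_t)|\le Ky(1+y)(1+|X_t|)$, and $|X_t|\le |X^0_t|+\alpha y$. The estimate \eqref{pVd4}, together with $\E^x[\sup_{s\le t}|X^0_s|]<\infty$ (hyper-exponential jumps have all moments), then gives uniform integrability of $\expo^{-\rho(t\wedge\tau_n)}f(X_{t\wedge\tau_n},Y_{t\wedge\tau_n})$, so dominated convergence applies to the left-hand side. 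For the right-hand side, the $\der s$, $\der\xi^c$ and jump-sum terms need integrable majorants. For the $\der s$ and $\der\xi^c$ terms I would try to bound $f_x$ and $f_y$ linearly, or else split them into positive and negative parts and use monotone convergence on each. If derivative bounds are not available from \eqref{l1} alone, I would add mild growth assumptions on the derivatives, since this is the step where the lemma's hypotheses are genuinely tested.
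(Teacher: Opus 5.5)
Your proposal is essentially the paper's proof: It\^o's formula for $\expo^{-\rho t}f(X_t,Y_t)$, a telescoping split of each jump at the intermediate point $(X_{s-}-\Delta S^{\tmn}_s+\Delta S^{\tmp}_s,Y_{s-})$ into a Poisson part and the control part $\mathcal{I}$ (this is an algebraic identity given \eqref{e2}, so no ordering assumption is needed), compensation of the Poisson part to produce $\mathcal{D}f$, and then expectations. At the step you flag, the paper merely asserts that the Brownian and compensated jump integrals are square-integrable martingales ``since $f$ fulfils \eqref{l1}''; that is fine for the jump integrals, whose integrands involve only values of $f$, but not for $\int_0^t\sigma\expo^{-\rho s}f_x(X_s,Y_s)\,\der W_s$ or for the $f_{xx},f_x,f_y$ terms --- e.g.\ $f(x,y)=y(1+y)\sqrt{1+x^{2}}\,\bigl[1+\tfrac{1}{2}\sin(\expo^{x^{2}})\bigr]$ satisfies \eqref{l1}, yet with $\xi\equiv0$ one has $\E^{x,y}\int_0^t|f_{xx}(X_s,y)|\,\der s=\infty$ for $t>1/(4\sigma^{2})$ --- so your extra derivative-growth hypothesis is genuinely needed, and it costs nothing in the application, since \eqref{vx1W}, \eqref{vxx1W} and \eqref{vy1W} show that the derivatives of $v_{b^{\star}}$ grow at most linearly.
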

Let us assume that the value function   $V$ is in $ \hol^{2,1}(\R\times \R_{*}^{+})$.  The agent faces two choices:  extract or wait. In the extraction scenario, the agent initially refrains from extracting for a short period $\Delta t$ and then proceeds according to the optimal extraction strategy (if one exists). This interim action, being suboptimal, is captured by an inequality as follows
\begin{equation}\label{EDH1}
	V(x,y)\geq \E^{x}\left[\expo^{-\rho\Delta t}V\left(X_{\Delta t},y\right)\right]\ \text{ for all } (x,y)\in \R\times \R_{*}^{+}.
\end{equation}
Since there is no selling on the interval time $[0,\Delta t]$ we have that $\xi_{t}\equiv0\in\mathcal{A}(y)$, $Y_{t}=y$ and $X_{t}=X^{0}_{t}$ for $t\in[0,\Delta t]$. Then, by Lemma \ref{Exp1}, we obtain  
\begin{align}
	\E^{x}\left[\expo^{-\rho\Delta t}V\left(X_{\Delta t},y\right)\right]-V(x,y)&=\E^{x}\left[\int_{0}^{\Delta t }\expo^{-\rho s}\mathcal{D}V(X_{s},y)\der s\right].
	\label{EDH5} 
\end{align}
Substituting \eqref{EDH5}  in \eqref{EDH1}, dividing by $\Delta t$ and then  letting $\Delta t\to 0 $, we have that 
\begin{equation}
	\label{eqDV}
	\mathcal{D}V(x,y)\leq 0.  
\end{equation}

In the second possibility,  we assume  that the agent decides to  extract a quantity of commodity $\varepsilon > 0$, sell  it in the market and then continue with the optimal extraction (assuming it exists). This action is associated with the inequality,
\begin{equation}\label{EDH6}
	V(x,y)\geq V(x-\alpha \varepsilon,y-\varepsilon)+(x-c)\varepsilon-\frac{1}{2}\alpha\varepsilon^2.
\end{equation}
Subtracting $V(x,y)$ from both sides of \eqref{EDH6}, dividing by $\varepsilon$, and then letting $\varepsilon \to 0$, we obtain
\begin{equation}\label{EDH7}
	-\alpha \frac{\partial}{\partial x}V(x,y)-\frac{\partial}{\partial y}V(x,y)+x-c\leq 0.
\end{equation}
Due to the  Markovian nature  of the problem,  it is expected that one of the  two actions will be optimal, and either  \eqref{eqDV} or \eqref{EDH7} will need to hold with equality. So, it  suggests that $V$ can be associated with a suitable solution $v$ of the HJB equation
\begin{equation}\label{eqHJB}
	\max\bigg\{\mathcal{D}v(x,y)	,-\alpha \frac{\partial}{\partial x}V(x,y)-\frac{\partial}{\partial y}V(x,y)+x-c\bigg\}=0
\end{equation}
for $(x,y)\in\R\times\R^{+}_{*}$, under the initial condition  $v(x,0)=0$ and the growth condition \eqref{p1V}.
\subsection{Verification Theorem}
\label{subsectionVT}
By the heuristic discussion  before, we divide $\R\times\R^{+}_{*}$ into two distinct sets,  called the \textit{no-selling } region and the \textit{selling}  region. These sets are defined, respectively, as follows
\begin{align}
	&\mathbb{W}\eqdef\bigg\{(x,y)\in\R\times\R^{+}_{*}: \mathcal{D}V(x,y)=0\notag\\
	&\hspace{4cm} \text{and  } -\alpha \frac{\partial}{\partial x}V(x,y)-\frac{\partial}{\partial y}V(x,y)<  c-x \bigg\},\label{RE}\\
	&\mathbb{S}\eqdef\bigg\{(x,y)\in\R\times\R^{+}_{*}: \mathcal{D}V(x,y)\leq0 \notag\\ 
	&\hspace{4cm} \text{and  } -\alpha \frac{\partial}{\partial x}V(x,y)-\frac{\partial}{\partial y}V(x,y)=  c-x \bigg\}.\label{RV}
\end{align}
We shall now introduce a verification theorem that establishes a connection between a smooth solution $v$ to the HJB equation \eqref{eqHJB} and the value function $V$. This theorem helps us in determining an optimal extraction strategy.   We  will denote by $\overline{\mathbb{W}}$ the topological closure of the set $\mathbb{W}$.

\begin{prop}[Verification theorem]\label{TV}
	Suppose that there exists a function $v:\R\times\R^{+}_{*}\longrightarrow \R$ such that $v\in \hol^{2,1}\left(\R\times\R^{+}_{*}\right)$ is the solution of the HJB equation \eqref{eqHJB} with boundary condition $v(x,0)=0$, increasing in $y$ and satisfies \eqref{l1}, then $v\geq V$  on $\R\times\R^{+}_{*}$. Furthermore,  suppose that for any initial value $(x,y)\in \R\times\R^{+}_{*}$, there exists $\xi^{\star}\in\mathcal{A}(y)$ such that  
	\begin{equation}\label{eo1}
		\left(X_t,Y_t\right)\in \overline{\mathbb{W}}\quad \text{for all}\ t\geq 0,\, \Pro\text{-c.s.},
	\end{equation}
	and
	\begin{equation}\label{eo2}
		\xi_{t}^{\star}=\int_{0}^{t}\mathbb{1}_{\{(X_t,Y_t)\in \mathcal{S}\}}\der \xi_{s}^{\star}\quad \text{for all}\ t\geq 0,\, \Pro\text{-c.s.}	
	\end{equation}
	Then  $v=V$ and $\mathcal{J}(x,y,\xi^{\star})=V(x,y)$ on $\R\times\R^{+}_{*}$, with $\mathcal{J}$ as in \eqref{ENPV}. Therefore, $\xi^{\star}$ is an optimal strategy.
\end{prop}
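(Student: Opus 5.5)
We prove the two claims of Proposition \ref{TV} separately. The first is $v\geq V$; the second is equality along $\xi^{\star}$. Both rest on Lemma \ref{Exp1} applied to $f=v$, which is allowed because $v\in\hol^{2,1}$ and satisfies \eqref{l1}.

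For the first claim, fix $(x,y)\in\R\times\R^{+}_{*}$ and an arbitrary $\xi\in\mathcal{A}(y)$, and apply \eqref{ETV10.1} on $[0,t]$.

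\emph{Diffusion term.} By \eqref{eqHJB} we have $\mathcal{D}v\leq0$, so $\int_0^t\expo^{-\rho s}\mathcal{D}v\,\der s\leq 0$.

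\emph{Continuous control term.} From \eqref{eqHJB}, $\alpha v_x+v_y\geq x-c$. Hence
\[
-\int_0^t \expo^{-\rho s}(\alpha v_x+v_y)\der\xi^c_s\leq-\int_0^t\expo^{-\rho s}(X_s-c)\der\xi^c_s .
\]

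\emph{Jump term.} We use the integral representation \eqref{g1} and the same inequality evaluated at the points $(X_{s-}-\Delta S^{\tmn}_s+\Delta S^{\tmp}_s-\alpha u,\,Y_{s-}-u)$. This gives
\[
\mathcal{I}[s;X,Y,v]\geq\int_0^{\Delta\xi_s}(\tilde X_{s}-\alpha u-c)\der u=(\tilde X_s-c)\Delta\xi_s-\tfrac{\alpha}{2}(\Delta\xi_s)^2,
\]
where $\tilde X_s=X_{s-}-\Delta S^{\tmn}_s+\Delta S^{\tmp}_s$. This is exactly the jump profit appearing in \eqref{ENPV}.

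Combining the three bounds and rearranging yields
\[
v(x,y)\geq \E^{x,y}\big[\expo^{-\rho t}v(X_t,Y_t)\big]+\E^{x,y}\big[\text{profit accrued on }[0,t]\big].
\]
Since $v\geq0$, we can drop $\expo^{-\rho t}v(X_t,Y_t)$ and let $t\to\infty$ to get $v\geq\mathcal{J}(x,y,\xi)$. The limit in the profit is justified by dominated convergence, using the integrability bounds of Proposition \ref{PFV}(3): there the integrand is dominated by $y\sup_t\expo^{-\rho t}|X^0_t|+\alpha y^2$, which has finite mean by \eqref{pVd4}. A localization (stopping times $\tau_n$ at which $|X|$ exceeds $n$) may be needed if Lemma \ref{Exp1} is only valid up to bounded times. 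Taking the supremum over $\xi$ gives $v\geq V$.

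For the second claim, we repeat the computation with $\xi^{\star}$, where every inequality becomes an equality.

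\emph{Diffusion term.} By \eqref{eo1} the process stays in $\overline{\mathbb{W}}$. By continuity of $\mathcal{D}v$, which follows from $v\in\hol^{2,1}$ and dominated convergence under the jump integrals, $\mathcal{D}v=0$ on $\overline{\mathbb{W}}$, so the $\der s$ term vanishes.

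\emph{Control terms.} By \eqref{eo2}, $\xi^{\star}$ only charges the selling set, where $\alpha v_x+v_y=x-c$. So the $\der\xi^{c}$ term matches the profit exactly. For the jump term we need the whole segment $\{(\tilde X_s-\alpha u,\,Y_{s-}-u):u\in[0,\Delta\xi^{\star}_s]\}$ to lie in the selling set. I would argue this from the structure of $\xi^{\star}$: a jump pushes the state along the direction $(-\alpha,-1)$ back to $\overline{\mathbb{W}}$ from the selling region, so the interior of the segment lies in $\mathbb{S}$, where equality holds.

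This yields $v(x,y)=\E^{x,y}[\expo^{-\rho t}v(X_t,Y_t)]+\E^{x,y}[\text{profit on }[0,t]]$. It remains to show the transversality condition $\E^{x,y}[\expo^{-\rho t}v(X_t,Y_t)]\to0$. By \eqref{l1} this quantity is at most $Ky(1+y)\,\E^{x,y}[\expo^{-\rho t}(1+|X_t|)]$. Since $|X_t|\leq|X^0_t|+\alpha y$, this tends to zero because $\expo^{-\rho t}\E^{x}|X^0_t|\to0$; the latter follows from the explicit moments of \eqref{eq1} (linear growth in $t$ and $\sqrt t$).

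Hence $v=\mathcal{J}(x,y,\xi^{\star})\leq V\leq v$, so $v=V$ and $\xi^{\star}$ is optimal.

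The main obstacle is the jump term in the equality part: verifying that the entire sale segment lies where $\alpha v_x+v_y=x-c$, since \eqref{eo2} literally only controls the post-jump point. I would handle this by reading \eqref{eo2} as requiring that $\der\xi^{\star}$ charges only $\mathbb{S}$ along the whole path of the instantaneous sale. Alternatively, I would restrict to strategies whose jump segments lie in $\mathbb{S}$, as barrier strategies do. The localization and limit interchange are routine given Proposition \ref{PFV}.
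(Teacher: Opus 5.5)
Your proposal is correct and follows essentially the paper's route: Lemma \ref{Exp1} applied to $v$ (with localization), the two HJB inequalities used termwise (via \eqref{g1} for the jumps of $\xi$), dominated convergence for the profit through \eqref{pVd4}, a growth-based transversality bound from \eqref{l1}, and equality along $\xi^{\star}$. Your deviations are refinements. Discarding $\expo^{-\rho t}v(X_t,Y_t)\geq 0$ in the inequality part is simpler than the paper's proof that this term vanishes. Your remark that the jump equality needs the whole sale segment to lie where $\alpha\parcial{v}{x}+\parcial{v}{y}=x-c$ addresses a point the paper passes over: it does not follow from \eqref{eo1}--\eqref{eo2} alone, so building it into \eqref{eo2}, as you propose, is the right fix, and it holds for $\xi^{b^{\star}}$ since the segment stays in $\{x\geq b^{\star}\}$.
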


\begin{proof}
	Let  $\xi\in\mathcal{A}(y)$. Taking $\tau_{N}\eqdef\inf\{s\geq 0: |X_{s}|>N\}\wedge\tau $, with $N$ a positive integer large enough and recalling that $\tau=\inf\{t\geq0:Y_{t}<0\}$, by Lemma \ref{Exp1}, we get that
	\begin{align}
		\label{e8.0}
		v(x,y) 
		&=\E^{x,y}[\expo^{-\rho [\tau_{N}\wedge t]}v\left(X_{\tau_{N}\wedge t},Y_{\tau_{N}\wedge t}\right)]\notag\\
		&\quad-\E^{x,y}\bigg[\int_{0}^{\tau_{N}\wedge t}\expo^{-\rho s}\mathcal{D}v(X_s,Y_s)\der s-\sum_{0\leq s\leq \tau_{N}\wedge t}\expo^{-\rho s}   \mathcal{I}[s;X,Y,v]\notag\\
		&\quad-\int_{0}^{\tau_{N}\wedge t}\expo^{-\rho s}\left(\alpha \frac{\partial}{\partial x}V(X_{s},Y_s)+\frac{\partial}{\partial y}V(X_{s},Y_s)\right)\der \xi_s^{c}\bigg].
	\end{align}
	It is known that  $\mathcal{D}v(x,y)\leq0$	and $-\alpha \frac{\partial}{\partial x}V(x,y)-\frac{\partial}{\partial y}V(x,y)+x-c\leq0$ for $(x,y)\in\R\times\R^{*}_{+}$, due to \eqref{eqHJB}. From here and using \eqref{g1}, it follows that 
	\begin{align}\label{e8}
		v(x,y)&\geq
		\E^{x,y}[\expo^{-\rho [\tau_{N}\wedge t]}v\left(X_{\tau_{N}\wedge t},Y_{\tau_{N}\wedge t}\right)] +\E^{x,y}\bigg[\int_{0}^{\tau_{N}\wedge t}\expo^{-\rho s}\left(X_{s}-c\right)\der \xi_s^{c}\notag\\
		&\quad+\sum_{0\leq s\leq \tau_{N}\wedge t}\expo^{-\rho s} \bigg\{[X_{s-}-\Delta S^{\tmn}_{s}+\Delta S^{\tmp}_{s}-c]\Delta \xi_{s}-\frac{\alpha}{2} [\Delta\xi_{s}]^{2}\bigg\}\bigg]\notag\\
		&=\E^{x,y}[\expo^{-\rho [\tau_{N}\wedge t]}v\left(X_{\tau_{N}\wedge t},Y_{\tau_{N}\wedge t}\right)]\notag\\
		&\quad+\E^{x,y}\bigg[\int_{0}^{\tau_{N}\wedge t}\expo^{-\rho s}\left(X_{s}-c\right)\der \xi_s+\frac{\alpha}{2}\sum_{0\leq s\leq \tau_{N}\wedge t}\expo^{-\rho s} [\Delta\xi_{s}]^{2}\bigg\}\bigg].
	\end{align}
	On the other hand,  recalling that $\int_{0}^{\infty}\der \xi_t\leq y$, it gives that
	\begin{multline}\label{e8.1}
		\bigg|\int_{0}^{\tau_{N}\wedge t}\expo^{-\rho s}\left(X_{s}-c\right)\der \xi_s+\frac{\alpha}{2}\sum_{0\leq s\leq \tau_{N}\wedge t}\expo^{-\rho s} [\Delta\xi_{s}]^{2}\bigg|\\
		\leq y\bigg[c+\sup_{t\geq0}\{\expo^{-\rho t}|X^{0}_{t}|\}\bigg]+ \frac{3}{2}\alpha y^{2},
	\end{multline}	
	where the  term on the right side of \eqref{e8.1} is $\Pro^{x}$-integrable, because of \eqref{pVd4}.  Then, by the dominated convergence theorem and since $\tau_{N}\xrightarrow[N\rightarrow\infty]{}\tau$ $\Pro^{y}$-a.s., it gives that 
	\begin{align}\label{e8.2}
		\lim_{t,N\rightarrow\infty}\E^{x,y}\bigg[\int_{0}^{\tau_{N}\wedge t}\expo^{-\rho s}\left(X_{s}-c\right)\der \xi_s+\frac{\alpha}{2}\sum_{0\leq s\leq \tau_{N}\wedge t}\expo^{-\rho s} [\Delta\xi_{s}]^{2}\bigg]=\mathcal{J}(x,y,\xi).
	\end{align}
	Meanwhile, since $0\leq Y\leq y$  and taking into account \eqref{e3} and \eqref{l1}, we have that for some $K>0$ independent of $t$ and $N$, it yields that
	\begin{align}\label{e9.0}
		0&\leq \expo^{-\rho [\tau_{N}\wedge t]}v\left(X_{\tau_{N}\wedge t},Y_{\tau_{N}\wedge t}\right)\notag\\
		&\leq K y[1+y]\{[1+\alpha y]\expo^{-\rho[ \tau_{N}\wedge t]}+\expo^{-\rho[ \tau_{N}\wedge t]}|X^{0}_{\tau_{N}\wedge t}|\}.
	\end{align}
	Notice that the process $\{[1+\alpha y]\expo^{-\rho[ \tau_{N}\wedge t]}+\expo^{-\rho[ \tau_{N}\wedge t]}|X^{0}_{\tau_{N}\wedge t}|:t\geq0, N\geq0\}$ is $\Pro^{x,y}$-integrable, since by H\"older's inequality, we have that 
	\begin{multline}\label{e9.1}
		\E^{x,y}[[1+\alpha y]\expo^{-\rho[ \tau_{N}\wedge t]}+\expo^{-\rho[ \tau_{N}\wedge t]}|X^{0}_{\tau_{N}\wedge t}|]\\
		\leq [1+\alpha y]+\E^{x}\bigg[\sup_{0\leq t}\Big\{\expo^{-\rho t}|X^{0}_{t}|^{2}\Big\}\bigg]^{1/2}<\infty.
	\end{multline}
	To verify the truth of \eqref{e9.1}, observe that through the application of integration by parts and Itô's formula on $\expo^{-\rho t}[X^{0}_{t}]^{2}$, and then proceeding in a similar manner as in \eqref{e6}--\eqref{pVd4}, it can be verified that $\E^{x}[\sup_{0\leq t}\{\expo^{-\rho t}|X^{0}_{t}|^{2}\}] \leq C_{1}[1+x^{2}]$, for a certain positive constant $C_{1}$ independent of $t$ and $N$. 
	
	On the event $\{\tau<\infty\}$, by the seen before, invoking the dominated convergence theorem and considering 
	\begin{align*}
	\lim_{t,n\rightarrow\infty}\expo^{-\rho t}v((X_{\tau_{N}\wedge t},Y_{\tau_{N}\wedge t}))\uno_{\{\tau<\infty\}}
	=\expo^{-\rho\tau}v(X_{\tau},0)\uno_{\{\tau<\infty\}}=0,
	\end{align*}
	we get that  
	\begin{equation}\label{e9}
		\lim_{t,N\rightarrow\infty}\E^{x,y}[\expo^{-\rho [\tau_{N}\wedge t]}v\left(X_{\tau_{N}\wedge t},Y_{\tau_{N}\wedge t}\right)\uno_{\{\tau<\infty\}}]=0.
	\end{equation}
	On the event $\{\tau=\infty\}$, invoking the dominated convergence theorem again, it follows that 
	\begin{align}\label{e9.2}
		\lim_{n\rightarrow\infty}\E^{x,y}[\expo^{-\rho [\tau_{N}\wedge t]}v\left(X_{\tau_{N}\wedge t},Y_{\tau_{N}\wedge t}\right)\uno_{\{\tau=\infty\}}]=\E^{x,y}[\expo^{-\rho  t}v\left(X_{t},Y_{ t}\right)\uno_{\{\tau=\infty\}}].
	\end{align}
	Proceeding in a similar manner that in \eqref{e9.0}, we obtain that 
	\begin{align}\label{e9.3}
		0&\leq \E^{x,y}[\expo^{-\rho  t}v\left(X_{t},Y_{ t}\right)\uno_{\{\tau=\infty\}}]\notag\\
		&\leq K y[1+y]\expo^{-\rho t}\{1+\alpha y+ |x|+\mu t +\E[|W_{t}|]\}\notag\\
		&=K y[1+y]\expo^{-\rho t}\bigg\{ 1+\alpha y + |x|+\mu t +\bigg[\frac{2t}{\pi}\bigg]^{1/2}\bigg\}.
	\end{align}
	Letting $t\rightarrow\infty$ in \eqref{e9.2} and considering \eqref{e9.3}, it implies that 
	\begin{align}\label{e9.4}
		\lim_{t,n\rightarrow\infty}\E^{x,y}[\expo^{-\rho [\tau_{N}\wedge t]}v\left(X_{\tau_{N}\wedge t},Y_{\tau_{N}\wedge t}\right)\uno_{\{\tau=\infty\}}]=0.
	\end{align}
	Thus, letting $t,n\rightarrow\infty$ in \eqref{e8} a considering \eqref{e8.2}, \eqref{e9} and  \eqref{e9.4}, we get that $v(x,y)\geq \mathcal{J}(x,y,\xi)$, for all $xi\in\mathcal{A}(y)$.
	From here, we conclude that $v(x,y)\geq V(x,y)$ for  $(x,y)\in\R\times\R_{*}^{+}$. Now, let $\xi^{\star}$ be in $\mathcal{A}(y)$ satisfying \eqref{eo1} and \eqref{eo2}. Observe that \eqref{e8.0} holds for this case. Additionally, since  $\left(X_t,Y_t\right)\in \overline{\mathbb{W}} \text{ for all } t\geq 0$, $\Pro\text{-c.s.}$, and $\xi^{\star}$ acts when $(X_{t},Y_{t})\in  \mathbb{S}$, by definition of  $\mathbb{W}$ and $\mathbb{S}$ (see \eqref{RE} and \eqref{RV}, respectively), it gives that
	\begin{align}\label{e11}
		v(x,y)&=\E^{x,y}[\expo^{-\rho [\tau_{N}\wedge t]}v\left(X_{\tau_{N}\wedge t},Y_{\tau_{N}\wedge t}\right)]\notag\\
		&\quad+\E^{x,y}\bigg[\int_{0}^{\tau_{N}\wedge t}\expo^{-\rho s}\left(X_{s}-c\right)\der\xi^{\star}_s+\frac{\alpha}{2}\sum_{0\leq s\leq \tau_{N}\wedge t}\expo^{-\rho s} [\Delta\xi^{\star}_{s}]^{2}\bigg].
	\end{align}
	Letting $t\rightarrow\infty$ and $N\rightarrow\infty$ in \eqref{e11}, and arguing in a similar way as in \eqref{e8.1}--\eqref{e9.4}, it follows that $\mathcal{J}(x,y,\xi^{\star})=v(x,y)\geq V(x,y)$ for  $(x,y)\in\R\times\R_{*}^{+}$. Therefore, $\mathcal{J}(x,y,\xi^{\star})= V(x,y)$ for  $(x,y)\in\R\times\R_{*}^{+}$.\qed
\end{proof}

\section{Optimal extraction strategy  and construction of  a  solution to \eqref{eqHJB} }\label{S4}
In order to find an optimal extraction strategy for the problem outlined in \eqref{fV}, our initial focus will   be on the barrier strategies, which are defined as follows.  For a fixed $b>0$, the control process $\xi^{b}\eqdef\{\xi^{b}_{t}:t\geq0\}$ is considered as 
\begin{equation}\label{c1}
	\xi^{b}_{t}\eqdef Y^{b}_{t-}\wedge \frac{1}{\alpha}\sup_{0\leq s\leq t}[X^{0}_{s}-b]^{+}.
\end{equation}
It is worth noting that $\xi^{b}\in\mathcal{A}(y)$. Then, the controlled process $(X^{b},Y^{b})\eqdef\{(X^{b}_{t},Y^{b}_{t}):t\geq0\}$ associated with the strategy $\xi^{b}$ is given by $X^{b}_{t}=X^{0}_{t}-\alpha\int_{0}^{t}\der \xi^{b}_{t}$ and$Y^{b}_{t}=y-\xi^{b}_{t}$ for $t\geq0$. Hence, the objective of this section is to establish a constant $b^{\star}>0$ such that the expected net profit $v_{b^{\star}}(x,y)\eqdef\mathcal{J}(x,y,\xi^{b^{\star}})$ aligns with the value function $V$ given in \eqref{fV} by solving \eqref{eqHJB}. To accomplish this objective, assuming the existence of $b^{\star}>0$, to be determined later, and taking into account \eqref{c1}, let us consider  the following scenarios.
\begin{enumerate}
	\item[(i)] When $x\geq \alpha y+b^{\star}$, indicating that the  commodity  price is  high enough  to sell  available units,  $v_{b^{\star}}$ must follow the  expression for $(x,y)\in\mathcal{S}_{2}\eqdef\{(x,y):x\geq \alpha y+b^{\star}\ \text{and}\ y\geq0\}$,
	\begin{equation}\label{cond1}
		v_{b^{\star}}(x,y)=[x-c]y-\alpha\frac{y^{2}}{2}.
	\end{equation}
	
	\item[(ii)] When $b^{\star}\leq x<\alpha y+b^{\star}$, the decision is  to sell  $[x-b^{\star}]/\alpha$ units of commodities at time $0$. In this case,    $v_{b^{\star}}$ should satisfy
	\begin{align}\label{cond2}
		v_{b^{\star}}(x,y)&=[x-c]\frac{[x-b^{\star}]}{\alpha}-\frac{[x-b^{\star}]^{2}}{2\alpha}+v_{b^{\star}}(b^{\star},y-\mathbb{Y}_{b^{\star}}(x))\notag\\
		&=\frac{[x-c]^{2}-[c-b^{\star}]^{2}}{2\alpha}+v_{b^{\star}}(b^{\star},y-\mathbb{Y}_{b^{\star}}(x)) 
	\end{align}
	for $(x,y)\in\mathcal{S}_{1}\eqdef\{(x,y):b^{\star}\leq x<\alpha y+b^{\star}\ \text{and}\ y  > 0\}$. Here $\mathbb{Y}_{b^{\star}}(x)\eqdef(x-b^{\star})/\alpha$.
		\item[(iii)] When $x<b^{\star}$, indicating that the commodity price is too low to sell any units,  the decision is to wait. Over this condition we get that  $v_{b^{\star}}$ should satisfy 
		\begin{equation}\label{cond2.1}
			\mathcal{D}v_{b^{\star}}(x,y)=0\quad \text{ for}\ (x,y)\in\mathcal{W}\eqdef\{(x,y): x<b^{\star},\ y\geq0\}. 
		\end{equation}
	\end{enumerate}

	By taking first derivates w.r.t. $x$ and $y$ in \eqref{cond1}--\eqref{cond2},  observe that  
	\begin{equation}\label{v2}
		\alpha\parcial{}{x}v_{b^{\star}}(x,y)+\parcial{}{y}v_{b^{\star}}(x,y)=x-c\quad\text{for}\ (x,y)\in \mathcal{S}\eqdef\mathcal{S}_{1}\cup\mathcal{S}_{2}.
	\end{equation}	
	
	We will now derive an explicit solution of $v_{b^{\star}}$. To ensure a concise solution, it is essential for $v_{b^{\star}}$ to maintain smoothness at the boundary of each respective region mentioned above. Before presenting Proposition \ref{pro1},  let us introduce some essential parameters  (which depend on the parameters given in Section  \ref{SectionFP}). Taking
	\begin{equation}\label{eqaux}
		p(r)\eqdef\frac{[\sigma r]^{2}}{2}+\mu r-(\rho+\lambda_{\tmn}+\lambda_{\tmp})+\lambda_{\tmp}\sum_{i=1}^{m_{\tmp}}\frac{\omega_{i}^{\tmp}\beta_{i}^{\tmp}}{\beta_{i}^{\tmp}-r}+\lambda_{\tmn}\sum_{i=1}^{m_{\tmn}}\frac{\omega_{i}^{\tmn}\beta_{i}^{\tmn}}{r+\beta_{i}^{\tmn}}=0,
	\end{equation}
	by Theorem 3.1 in \cite{CK2011}, it is known  that \eqref{eqaux}   have at most $m_{\tmp}+1$ positive roots and  $m_{\tmn}+1$ negative roots. Additionally, denoting the positive and negative roots of \eqref{eqaux} by $r_{i}$ (with $i\in\{0,\dots, m_{\tmp} \}$) and $r^{\tmn}_{j}$ (with $j\in\{0,\dots, m_{\tmn} \}$) respectively,     they satisfy  the following monotonicity
	\begin{multline}\label{des}
		r^{\tmn}_{m_{\tmn}}<-\beta^{\tmn}_{m_{\tmn}}<\cdots<-\beta^{\tmn}_{2}<r^{\tmn}_{1}<-\beta^{\tmn}_{1}<r^{\tmn}_{0}<0\\
		<r_0<\beta_1^{\tmp}<r_1<\beta_2^{\tmp}<r_2<\beta_3^{\tmp}<\cdots<\beta_{m_{\tmp}}^{\tmp}<r_{m_{\tmp}}.
	\end{multline}
	Let $A_{n}$, with $n\in\mathcal{N}_{\tmp}\eqdef\{1,\dots,m_{\tmp}\}$, be a $(n+1)\times (n+1)$-matrix  such that 
	\begin{equation}\label{ma1}
		A_{n}=\begin{pmatrix}
			I_{n+1}\\
			B_{n}
		\end{pmatrix}
	\end{equation}
	where $B_n=(b_{ij})$ is a matrix of size $n\times (n+1)$ where $b_{ij}=\frac{\beta^{\tmp}_i}{\beta^{\tmp}_i-r_{j-1}}$, with $(i,j)\in\{1,\dots,n\}\times\{1,\dots,n+1\}$, and $I_{n+1}$ is a row with all elements equal to 1 of size $n+1$. The operators $\det[\,\cdot\,]$ and $\Cof_{i,j}[\,\cdot\,]$  indicate respectively the determinant and  the $(i,j)$ cofactor of any $n\times n$ matrix. 
	
	\begin{prop}\label{pro1}
		The function $v_{b^{\star}}$ given by 
		\begin{align}\label{v1.1}
			&v_{b^{\star}}(x,y)\notag\\
			&=\begin{cases}
				\sum_{j=1}^{m_{\tmp}+1}\frac{K_{j-1}}{\alpha r_{j-1}}[1-\expo^{-\alpha r_{j-1}y}]\expo^{r_{j-1}[x-b^{\star}]} &\text{if}\  x<b^{\star}\ \text{and }\ y\geq 0,\\
				\vspace{-0.45cm}&\\
				\sum_{j=1}^{m_{\tmp}+1}\frac{K_{j-1}}{\alpha r_{j-1}}\big[1-\expo^{-\alpha r_{j-1}[y-[x-b^{\star}]/\alpha]}\big]&\\
				\hspace{1.9cm}+\frac{1}{2\alpha}\big[[x-c]^{2}-[b^{\star}-c]^{2}\big] &\text{if}\ b^{\star}\leq x<\alpha y+b^{\star}\ \text{and}\ y>0,\\
				\vspace{-0.45cm}&\\
				[x-c]y-\frac{\alpha}{2}y^{2}&\text{if}\ x\geq \alpha y+b^{\star}\  \text{and}\ y\geq 0,
			\end{cases}
		\end{align}
		is a solution to \eqref{cond1}--\eqref{cond2.1}, where  $b^{\star}$ and $K_{j-1}$ are  such that
		\begin{align}
			b^{\star}&=  {c+\sum_{i=1}^{m_{\tmp}+1}\frac{1}{r_{i-1}}-\sum_{i=2}^{m_{\tmp}+1}\frac{1}{\beta_{i-1}^{\tmp}}>c},\label{b}\\
			K_{j-1}&=\frac{\mathcal{R}^{m_{\tmp}}\Cof_{1,j}[A_{m_{\tmp}}]}{[r_{j-1}]^{2}\det [A_{m_{\tmp}}]}>0\quad\text{for}\ j\in\{1,\dots,m_{\tmp}+1\},\label{K2}
		\end{align}
		with 
		\begin{align}\label{Cof3}
			\mathcal{R}^{n}\eqdef \frac{\bar{\vartheta}^{n}}{\underbar{$\vartheta$}^{n}},\quad \underbar{$\vartheta$}^{n}\eqdef\prod_{k=2}^{n+1}\beta^{\tmp}_{k-1}\quad\text{and}\quad\bar{\vartheta}^{n}\eqdef\prod_{k=1}^{n+1}r_{k-1}.
		\end{align}
	\end{prop}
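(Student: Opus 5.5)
I would verify the proposed formula region by region, and then derive the constants from smooth-fit conditions. On $\mathcal{S}_{2}$, \eqref{cond1} holds by definition. On $\mathcal{S}_{1}$, putting $x=b^{\star}$ and $y' = y-\mathbb{Y}_{b^{\star}}(x)$ into the first line of \eqref{v1.1} (extended by continuity to $x=b^{\star}$) gives $v_{b^{\star}}(b^{\star},y')=\sum_j \frac{K_{j-1}}{\alpha r_{j-1}}[1-\expo^{-\alpha r_{j-1}y'}]$. So the second line is exactly \eqref{cond2}. Continuity across $x=\alpha y+b^{\star}$ then amounts to checking $\frac{1}{2\alpha}([x-c]^2-[b^{\star}-c]^2)=[x-c]y-\frac{\alpha}{2}y^2$ when $y=(x-b^{\star})/\alpha$. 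This is an algebraic identity.

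The substantive step is \eqref{cond2.1} on $\mathcal{W}$. There the ansatz is a combination of $\expo^{r x}$ with $r>0$, since boundedness as $x\to-\infty$ rules out the negative roots. For fixed $y$ I would compute $\mathcal{D}$ applied to $\expo^{r_{j-1}(x-b^{\star})}$ using \eqref{generador}. The negative-jump integral $\int_0^\infty v(x-z,y)\der P_{\tmn}(z)$ stays in $\{x<b^{\star}\}$ and yields $\sum_i\omega_i^{\tmn}\beta_i^{\tmn}/(r+\beta_i^{\tmn})$, so that part is harmless. The positive-jump integral $\int_0^\infty v(x+z,y)\der P_{\tmp}(z)$, however, crosses into $\mathcal{S}_1\cup\mathcal{S}_2$. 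I would split it at $z=b^{\star}-x$. On $[0,b^{\star}-x)$ the exponential piece gives the term $\omega_i^{\tmp}\beta_i^{\tmp}/(\beta_i^{\tmp}-r)$ of $p(r)$ together with a remainder proportional to $\expo^{-\beta_i^{\tmp}(b^{\star}-x)}$. On $[b^{\star}-x,\infty)$, the memoryless form of each exponential kernel makes the contribution equal to $\expo^{-\beta_i^{\tmp}(b^{\star}-x)}\int_0^\infty \beta_i^{\tmp}\expo^{-\beta_i^{\tmp}u}v_{b^{\star}}(b^{\star}+u,y)\der u$. Since each $r_{j-1}$ is a root of $p$, as in \eqref{eqaux}, $\mathcal{D}v_{b^{\star}}$ on $\mathcal{W}$ reduces to $\sum_{i=1}^{m_{\tmp}}\lambda_{\tmp}\omega_i^{\tmp}\expo^{-\beta_i^{\tmp}(b^{\star}-x)}\,\Phi_i(y)$. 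Here $\Phi_i(y)$ is the difference between the $\beta_i^{\tmp}$-Laplace transform of $u\mapsto v_{b^{\star}}(b^{\star}+u,y)$ on the selling region and $\sum_j\frac{\beta_i^{\tmp}}{\beta_i^{\tmp}-r_{j-1}}v_j(y)$, where $v_j(y)=\frac{K_{j-1}}{\alpha r_{j-1}}(1-\expo^{-\alpha r_{j-1}y})$. The exponentials in $x$ are linearly independent, so the requirement is $\Phi_i(y)\equiv 0$ for each $i\in\mathcal{N}_{\tmp}$ and all $y\ge0$.

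To make this tractable, I would impose the smooth-fit condition at $x=b^{\star}$: $\partial_x$ must be continuous, or equivalently \eqref{v2} must hold at $x=b^{\star}^-$. Differentiating $\Phi_i$ in $y$ and integrating by parts in $u$ using \eqref{v2} should reduce the conditions, uniformly in $y$, to a linear system in the $K_{j-1}$. The system consists of one equation $\sum_j K_{j-1}r_{j-1}\cdot(\text{const})=b^{\star}-c$ type normalization, coming from smooth fit, plus the $m_{\tmp}$ equations $\sum_j b_{ij}r_{j-1}^2K_{j-1}=\text{const}_i$. This is exactly the matrix $A_{m_{\tmp}}$ of \eqref{ma1}, acting on the vector $(r_{j-1}^2K_{j-1})_j$ with right-hand side $(\mathcal{R}^{m_{\tmp}},0,\dots,0)^{\trans}$. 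Cramer's rule then yields \eqref{K2}. A second-derivative fit condition, also needed for $\hol^{2,1}$, determines $b^{\star}$. Evaluating it with the identity $\sum_j \Cof_{1,j}[A]/(r_{j-1}\det A)$, computed via a Cauchy-type determinant, should give \eqref{b}.

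I expect the main obstacle to be the determinant computations. These are showing that $A_{m_{\tmp}}$ is a Cauchy-like matrix whose determinant and cofactors factor explicitly, and deriving from \eqref{des} the signs needed for $K_{j-1}>0$ and $b^{\star}>c$. I would first write $\frac{\beta_i}{\beta_i-r_{j}}$ as a Cauchy matrix after scaling rows, use the classical Cauchy determinant formula, and then read off the signs from the interlacing $r_0<\beta_1^{\tmp}<r_1<\cdots$. The sum $\sum_j 1/r_{j-1}-\sum_i1/\beta_i^{\tmp}$ is positive by the same interlacing, pairing $1/r_{i}$ with $1/\beta_{i}^{\tmp}$, which gives $b^{\star}>c$.
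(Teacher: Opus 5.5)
Your reduction on $\mathcal{W}$ is correct, and the $y$-differentiation idea works more simply than you suggest. With $\beta=\beta^{\tmp}_i$, integrating by parts uses only \eqref{v2} on $\mathcal{S}$ and continuity at $x=b^{\star}$, not smooth fit. It gives
\[
\Phi_i'(y)+\alpha\beta\,\Phi_i(y)=b^{\star}-c+\frac{1}{\beta}-\sum_{j=1}^{m_{\tmp}+1}\frac{\beta K_{j-1}}{\beta-r_{j-1}},\qquad \Phi_i(0)=0 ,
\]
so $\mathcal{D}v_{b^{\star}}=0$ on $\mathcal{W}$ is exactly \eqref{cond3K}. The gap is in what you do next. \eqref{cond3K} is an inhomogeneous system in $K$ whose right-hand side contains $b^{\star}$. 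It is not the system you state, namely $\sum_j\frac{\beta^{\tmp}_i}{\beta^{\tmp}_i-r_{j-1}}\,r_{j-1}^2K_{j-1}=0$ together with $\sum_j r_{j-1}^2K_{j-1}=\mathcal{R}^{m_{\tmp}}$. That normalization is not a smooth-fit condition. It is $u''(b^{\star}-)$ for $u=\alpha\partial_xv+\partial_yv$, which differs from $u''(b^{\star}+)=0$, and you have effectively read it off from \eqref{K2}. Passing between the two forms uses $\frac{\beta r^2}{\beta-r}=\frac{\beta^3}{\beta-r}-\beta^2-\beta r$. That step requires both $\sum_jK_{j-1}=b^{\star}-c$ and $\sum_j r_{j-1}K_{j-1}=1$ for the explicit constants, i.e.\ \eqref{cond1K} and \eqref{cond2K}.

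This is where your treatment of $b^{\star}$ breaks down. With $K$ given by \eqref{K2}, which does not involve $b^{\star}$, the second-order fit $\sum_j r_{j-1}K_{j-1}=1$ contains no $b^{\star}$ at all. Evaluating it through $\sum_j\Cof_{1,j}[A_{m_{\tmp}}]/r_{j-1}=\det[A_{m_{\tmp}}]/\mathcal{R}^{m_{\tmp}}$ only checks the normalization and cannot produce \eqref{b}. The threshold enters only through $\sum_jK_{j-1}=b^{\star}-c$, so \eqref{b} is equivalent to a different identity:
\[
\frac{\mathcal{R}^{m_{\tmp}}}{\det[A_{m_{\tmp}}]}\sum_{j=1}^{m_{\tmp}+1}\frac{\Cof_{1,j}[A_{m_{\tmp}}]}{r_{j-1}^{2}}=\sum_{j=1}^{m_{\tmp}+1}\frac{1}{r_{j-1}}-\sum_{i=1}^{m_{\tmp}}\frac{1}{\beta^{\tmp}_{i}} .
\]
This is the substantive identity, and your plan does not contain it. The paper gets it by solving $A_{m_{\tmp}}K=C$ with Cramer's rule and proving \eqref{Cof5} by induction. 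In your Cauchy framework it is a confluent Cauchy determinant. Rescale rows $2,\dots,m_{\tmp}+1$ by $1/\beta^{\tmp}_i$ and replace the first row by $1/(x_0-r_{j-1})$. Then differentiate in $x_0$ at $x_0=0$ and take the logarithmic derivative of $\prod_i(\beta^{\tmp}_i-x_0)/\prod_j(x_0-r_{j-1})$. Without this identity, neither \eqref{b} nor \eqref{cond3K} is established, and so neither is $\mathcal{D}v_{b^{\star}}=0$ on $\mathcal{W}$.

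A smaller slip: to get $b^{\star}>c$, pair $1/r_{i-1}$ with $1/\beta^{\tmp}_i$ (since $r_{i-1}<\beta^{\tmp}_i$) and keep the leftover $1/r_{m_{\tmp}}$. In the paper's indexing, pairing $1/r_i$ with $1/\beta^{\tmp}_i$ gives negative differences.
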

	\begin{rem}
		The sequence $\{K_{j-1}\}_{j=1}^{m_{\tmp}+1}$ exhibits the following properties
		\begin{align}
			&\sum_{j=1}^{m_{\tmp}+1}K_{j-1}=b^{\star}-c,\label{cond1K}\\
			&\sum_{j=1}^{m_{\tmp}+1}r_{j-1 }K_{j-1}=1,\label{cond2K}\\
			&\sum_{j=1}^{m_{\tmp}+1}\frac{\beta_{k-1}^{\tmp}}{\beta_{k-1}^{\tmp}-r_{j-1}}K_{j-1}=b^{\star}-c+\frac{1}{\beta_{k-1}^{\tmp}},\label{cond3K}
		\end{align}
		where $k\in\{2,\dots,m_{\tmp}+1\}$.  Further elucidation on these properties can be found in Appendix \ref{B}.
	\end{rem}
	
	\section{Verification of optimality}
	\label{S5}
	In this section, we will verify that $v_{b^{\star}}$ given by \eqref{v1.1} satisfies the HJB equation \eqref{eqHJB}. This will demonstrate that the value function $V$ as defined in \eqref{fV} aligns with $v_{b^{\star}}$, and the strategy $\xi^{b^{\star}}$ given in \eqref{c1} (when $b=b^{\star}$) represents an optimal extraction strategy.
	\begin{theor}\label{teo1}
		Let  $v_{b^{\star}}$ be given by \eqref{v1.1}, then
		\begin{enumerate}[label=(\arabic*),ref=\arabic*]
			\item The function $v_{b^{\star}}$ belongs to $\hol^{2,1}\left(\R\times\R^{+}\right)$,  {is convex and increasing in $x$}, is increasing in $y$, and satisfies \eqref{l1}. Furthermore, $v_{b^{\star}}(x,0)=0$ for all $x\in\R$.
			\item The function $v_{b^{\star}}$ is   a solution to the  HJB equation \eqref{eqHJB}. Therefore,   the value function $V$, given by  \eqref{fV}, coincides with $v_{b^{\star}}$ and    $\xi^{b^{\star}}$, defined in \eqref{c1} when $b=b^{\star}$, is an optimal extraction strategy.
		\end{enumerate}
	\end{theor}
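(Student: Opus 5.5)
We will verify the two parts of the theorem directly from the explicit formula \eqref{v1.1}, using the identities \eqref{cond1K}--\eqref{cond3K} and the sign information $K_{j-1}>0$, $r_{j-1}>0$ and $b^{\star}>c$ from Proposition \ref{pro1}. We then invoke the verification theorem, Proposition \ref{TV}.

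\textbf{Part (1).} Regularity. Inside each of the three regions $\mathcal{W}$, $\mathcal{S}_1$ and $\mathcal{S}_2$ the function is smooth. It is therefore enough to check continuity of $v_{b^{\star}}$, $\partial_x v_{b^{\star}}$, $\partial_{xx} v_{b^{\star}}$ and $\partial_y v_{b^{\star}}$ across the lines $x=b^{\star}$ and $x=\alpha y+b^{\star}$.
\begin{itemize}
\item On $x=\alpha y+b^{\star}$ the exponential terms in the middle branch vanish. The remaining quadratic coincides with $[x-c]y-\frac{\alpha}{2}y^2$, and the first derivatives agree since both equal the $\mathcal{S}_2$ formulas there.
\item On $x=b^{\star}$, continuity of the value is immediate. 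Continuity of $\partial_x$ reduces to $\sum_j K_{j-1}=b^{\star}-c$, which is \eqref{cond1K}.
\item Continuity of $\partial_{xx}$ reduces to $\sum_j r_{j-1}K_{j-1}=1$, which is \eqref{cond2K}. This is exactly the smooth-fit input.
\end{itemize}
The condition $v_{b^{\star}}(x,0)=0$ is read off directly. Monotonicity in $y$ holds because $\partial_y v_{b^{\star}}=\sum_j K_{j-1}e^{-\alpha r_{j-1}(\cdot)}e^{r_{j-1}(x-b^{\star})}\ge0$ on $\mathcal{W}\cup\mathcal{S}_1$, and $\partial_y v_{b^{\star}}=x-c-\alpha y\ge b^{\star}-c>0$ on $\mathcal{S}_2$. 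Monotonicity and convexity in $x$ follow from the same kind of sign checks:
\begin{itemize}
\item on $\mathcal{W}$, every derivative is a positive combination of exponentials;
\item on $\mathcal{S}_1$, $\partial_{xx}v_{b^{\star}}=\frac1\alpha\big(1-\sum_j r_{j-1}K_{j-1}e^{-r_{j-1}(\alpha y-x+b^{\star})}\big)\ge0$ by \eqref{cond2K};
\item on $\mathcal{S}_2$, $\partial_{xx}v_{b^{\star}}=0$.
\end{itemize}
For the growth bound \eqref{l1} we use $1-e^{-\alpha r y}\le \alpha r y$ and $e^{r(x-b^{\star})}\le1$ on $\mathcal{W}$. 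On $\mathcal{S}_1$ we use $|x-c|\le \alpha y+b^{\star}+c$.

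\textbf{Part (2).} The gradient part. By \eqref{v2}, the gradient expression $\alpha\partial_xv_{b^{\star}}+\partial_yv_{b^{\star}}=x-c$ holds on $\mathcal{S}$. On $\mathcal{W}$ we compute
\[
\alpha\partial_x v_{b^{\star}}+\partial_y v_{b^{\star}}=\sum_j K_{j-1}e^{r_{j-1}(x-b^{\star})}=:g(x).
\]
Then $g$ is convex and increasing with $g(b^{\star})=b^{\star}-c$ and $g'(b^{\star})=1$, so $g(x)\ge x-c$ for $x<b^{\star}$ by convexity (tangent line). This gives the second inequality of \eqref{eqHJB}.

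The generator part. We need $\mathcal{D}v_{b^{\star}}=0$ on $\mathcal{W}$ and $\mathcal{D}v_{b^{\star}}\le0$ on $\mathcal{S}$. On $\mathcal{W}$ the nonlocal terms $\int v(x+z,y)\,\der P_{\tmp}(z)$ reach into $\mathcal{S}$. We split each such integral at $z=b^{\star}-x$ and use the hyper-exponential density \eqref{cond3.0}. Each $e^{r_{j-1}x}$ term then produces $p(r_{j-1})=0$ plus boundary terms proportional to $e^{-\beta_k^{\tmp}(b^{\star}-x)}$. These boundary terms cancel precisely by \eqref{cond3K}, together with \eqref{cond1K} and \eqref{cond2K}. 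The $y$-dependence needs care: for $z$ large we pass into $\mathcal{S}_2$, and we expect a similar cancellation there. The downward jumps stay in $\mathcal{W}$ and are harmless.

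On $\mathcal{S}$ we would like to show that $x\mapsto\mathcal{D}v_{b^{\star}}(x,y)$ is nonincreasing and then use continuity at $x=b^{\star}$, where $\mathcal{D}v_{b^{\star}}=0$ by the $C^2$ fit. The plan is to write $\mathcal{D}v_{b^{\star}}(x,y)=\mathcal{D}v_{b^{\star}}(x,y)-\mathcal{D}v_{b^{\star}}(b^{\star},y')$ along the characteristic direction $(\alpha,1)$. On $\mathcal{S}_1$, $v_{b^{\star}}(x,y)=h(x)+v_{b^{\star}}(b^{\star},y-\mathbb{Y}_{b^{\star}}(x))$, which transfers the problem to $\mathcal{D}$ of a quadratic in $x$ minus a jump correction. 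The local part contributes $\frac{\sigma^2}{2\alpha}+\frac{\mu}{\alpha}(x-c)-\rho(\cdots)$, and its sign is controlled using $b^{\star}>c$ and the explicit form \eqref{b}.

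\textbf{Main obstacle.} This last inequality, $\mathcal{D}v_{b^{\star}}\le0$ on $\mathcal{S}_1\cup\mathcal{S}_2$, is where we expect the difficulty. The negative jumps pull the process from $\mathcal{S}$ back into $\mathcal{W}$ and create terms with no closed-form cancellation. We would first try to show $\partial_x\mathcal{D}v_{b^{\star}}\le0$ on $\mathcal{S}$, using convexity of $v_{b^{\star}}$ in $x$ and $\partial_{xx}v_{b^{\star}}\le 1/\alpha$ to bound the jump terms. If that fails, we would use the analogous argument in the $y$ direction on $\mathcal{S}_2$, where $\mathcal{D}v_{b^{\star}}$ is explicit.

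Conclusion. Once both parts hold, Proposition \ref{TV} applies with $\xi^{\star}=\xi^{b^{\star}}$. Indeed, $(X^{b^{\star}},Y^{b^{\star}})$ stays in $\overline{\mathcal{W}}$ because the barrier strategy reflects the process at $x=b^{\star}$, and it acts only on $\{x=b^{\star}\}\subset\mathcal{S}$ (plus the initial jump). Hence $v_{b^{\star}}=V$ and $\xi^{b^{\star}}$ is optimal.
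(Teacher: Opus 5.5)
Your Part (1), the gradient constraint on $\mathcal{W}$ and the closing appeal to Proposition \ref{TV} follow the paper. Your tangent-line argument for $g$ is exactly the paper's use of $\expo^{z}\geq 1+z$ together with \eqref{cond1K}--\eqref{cond2K}. Also, $\mathcal{D}v_{b^{\star}}=0$ on $\mathcal{W}$ need not be re-derived, since it is part of Proposition \ref{pro1}. The genuine gap is the step you flag yourself: $\mathcal{D}v_{b^{\star}}\leq 0$ on $\mathcal{S}_1\cup\mathcal{S}_2$ is never established. You list things you would try, but none is carried out, and this inequality is the real content of Part (2).

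The mechanism you do sketch would not work as stated. The local part $\frac{\sigma^{2}}{2\alpha}+\frac{\mu}{\alpha}(x-c)-\rho(\cdots)$ cannot be signed from $b^{\star}>c$ and \eqref{b} alone, since $\mu\in\R$ is arbitrary. The downward-jump terms are not an obstruction lacking closed form: they are explicit multiples of $\expo^{-\beta^{\tmn}_{k-1}(x-b^{\star})}$, and what is needed is the sign of their coefficients, not a cancellation.

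The paper computes $\mathcal{D}v_{b^{\star}}$ on $\mathcal{S}_1$ explicitly. The upward-jump contributions landing in $\mathcal{S}_2$ cancel by \eqref{cond3K}, so $\mathcal{D}v_{b^{\star}}(x,y)=H_1(x)$ is independent of $y$. It then uses Lemma \ref{ru}. Summing $K_{j-1}p(r_{j-1})=0$ and $K_{j-1}p(r_{j-1})/r_{j-1}=0$ over $j$ gives \eqref{equ1}--\eqref{equ2}, which eliminate $\mu$, and \eqref{equ3} gives $\Xi^{\tmn}_{k-1}>0$. This yields $H_1(b^{\star}+)=0$ and $H_1'(x)=-\frac{1}{\alpha}\big[\lambda_{\tmn}\sum_{k}\omega^{\tmn}_{k-1}\Xi^{\tmn}_{k-1}(1-\expo^{-\beta^{\tmn}_{k-1}(x-b^{\star})})+\rho(x-b^{\star})+\frac{\sigma^{2}}{2}\mathcal{R}^{m_{\tmp}}\big]<0$. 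An analogous explicit formula handles $\mathcal{S}_2$.

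Your characteristic idea can in fact be completed, but it needs an observation the proposal does not make. The function $u:=\alpha\partial_x v_{b^{\star}}+\partial_y v_{b^{\star}}$ depends on $x$ only: $u=g$ on $(-\infty,b^{\star})$ and $u(x)=x-c$ on $[b^{\star},\infty)$. Since $\mathcal{D}$ acts only in $x$ with constant coefficients, $(\alpha\partial_x+\partial_y)\mathcal{D}v_{b^{\star}}=\Gamma u$, with $\Gamma$ as in Section \ref{stop1}. The lines $x-\alpha y=\mathrm{const}$ are parallel to $\{x=b^{\star}+\alpha y\}$. So you integrate along them:
\begin{itemize}
\item for points of $\mathcal{S}_1$, from $(b^{\star},y-\mathbb{Y}_{b^{\star}}(x))$, where $\mathcal{D}v_{b^{\star}}=0$ by continuity of $\mathcal{D}v_{b^{\star}}$ across $x=b^{\star}$;
\item for points of $\mathcal{S}_2$, from $(x-\alpha y,0)$, where $v_{b^{\star}}\equiv 0$.
\end{itemize}
This reduces everything to $\Gamma u<0$ on $(b^{\star},\infty)$, which follows without Lemma \ref{ru}:
\begin{itemize}
\item $\Gamma u=0$ on $(-\infty,b^{\star})$.
\item Across $b^{\star}$ only the second-order term jumps, so $\Gamma u(b^{\star}+)=\frac{\sigma^{2}}{2}[u''(b^{\star}+)-u''(b^{\star}-)]=-\frac{\sigma^{2}}{2}\sum_{j}r_{j-1}^{2}K_{j-1}<0$.
\item For $x>b^{\star}$, $(\Gamma u)'(x)=-\rho-\lambda_{\tmn}\int_{0}^{\infty}[1-u'(x-z)]\der P_{\tmn}(z)<0$, because $u'\leq 1$ ($g'$ is increasing with $g'(b^{\star})=1$).
\end{itemize}
Without an argument of this kind, Part (2) remains unproved.

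A smaller point: $\xi^{b^{\star}}$ does not act only on $\{x=b^{\star}\}$ and at $t=0$. Every upward jump of $S^{\tmp}$ that carries the price above $b^{\star}$ triggers a lump sale. You should check that these sales run along segments inside $\mathcal{S}$, so that \eqref{eo2} and the equality case of the verification argument hold.
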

	
	\begin{proof}[Proof of Theorem \ref{teo1}. First part]
		We divide the proof in three steps. 
		
		\textit{Step 1.} First we shall prove that $v_{b^{\star}} \in \hol^{2,1}\left(\R\times\R^{+}\right)$. To prove that $v_{b^{\star}}$ is of class $\hol^2$, it is sufficient  to show this property only on  $\mathcal{A}\eqdef\{(b^{\star},y): y\geq 0\}\cup\{(\alpha y+b^{\star},y): y\geq 0\}$, because of $v_{b^{\star}}\in\hol^{2,1}((\R\times\R^{*})\setminus\mathcal{A})$. 
		
		\textit{Smooth fit at the variable $x$:}  Taking first derivative w.r.t. $x$ in \eqref{v1.1}, we have that 
		\begin{equation}\label{vx1W}
			\parcial{}{x}v_{b^{\star}}(x,y)=
			\begin{cases}
				\sum_{j=1}^{m_{\tmp}+1}\frac{K_{j-1}}{\alpha}[1-\expo^{-\alpha r_{j-1}\, y}]\expo^{r_{j-1}[x-b^{\star}]} &  \text{if } (x,y)\in\mathcal{W},  \\
				\vspace{-0.4cm}&\\
				-\sum_{j=1}^{m_{\tmp}+1}\frac{K_{j-1}}{\alpha}\expo^{r_{j-1}[x-b^{\star}-\alpha y]}+\frac{x-c}{\alpha} &  \text{if } (x,y)\in {\mathcal{S}_1}, \\
				y & \text{if } (x,y)\in {\mathcal{S}_2}. 
			\end{cases}
		\end{equation}
		From here and \eqref{cond1K}, we have that 
		\begin{align*}
			\parcial{}{x}v_{b^{\star}}(b^{\star}{+},y)&=-\sum_{j=1}^{m_{\tmp}+1}\frac{K_{j-1}}{\alpha}\expo^{-\alpha r_{j-1}\, y}+\frac{b^{\star}-c}{\alpha}\\
			&=\sum_{j=1}^{m_{\tmp}+1}\frac{K_{j-1}}{\alpha}[1-\expo^{-\alpha r_{j-1}\, y}]=\parcial{}{x}v_{b^{\star}}(b^{\star}{-},y).
		\end{align*}
		On the other hand,  
		\begin{align*}
			\parcial{}{x}v_{b^{\star}}([b^{\star}+\alpha y]{-},y)&=-\frac{1}{\alpha}\sum_{j=1}^{m_{\tmp}+1}K_i+\frac{1}{\alpha}(\alpha y+b^{\star}-c)\notag\\
			&=y=\parcial{}{x}v_{b^{\star}}([b^{\star}+\alpha y]{+},y).
		\end{align*}
		Hence, we have the continuity of  $\parcial{}{x}v_{b^{\star}}$ at $x=b^{\star}$. Now, calculating the second derivative w.r.t. $x$ and considering that \eqref{cond2K} is true, we see that
		\begin{align}\label{vxx1W}
			&\parcial{^{2}}{x^{2}}v_{b^{\star}}(x,y)\notag\\
			&\quad=\begin{cases}
				\sum_{j=1}^{m_{\tmp}+1}\frac{K_{j-1}\,r_{j-1}}{\alpha}[1-\expo^{-\alpha r_{j-1}\, y}]\expo^{r_{j-1}[x-b^{\star}]}, & \text{if } (x,y)\in\mathcal{W}, \\
				\vspace{-0.4cm}&\\
				\sum_{j=1}^{m_{\tmp}+1}\frac{K_{j-1}r_{j-1}}{\alpha}\Big[1-\expo^{r_{j-1}[x-b^{\star}-\alpha y]}\Big], &  \text{if } (x,y)\in {\mathcal{S}_1,}\\
				0& \text{if } (x,y)\in {\mathcal{S}_2}. 
			\end{cases}
		\end{align}
		Using \eqref{vxx1W}, it is straightforward to verify that $\parcial{^{2}}{x^{2}}v_{b^{\star}}(b^{\star}{+},y)=\parcial{^{2}}{x^{2}}v_{b^{\star}}(b^{\star}{-},y)$, and  $\parcial{^{2}}{x^{2}}v_{b^{\star}}([b^{\star}+\alpha y]{-},y)=0$. From this, it follows that $\parcial{^{2}}{x^{2}}v_{b^{\star}}$ is continuous at $x=b^{\star}$.  {Furthermore,  we have   $\parcial{}{x}v_{b^{\star}}(x,y)\geq0$ and $\parcial{^{2}}{x^{2}}v_{b^{\star}}(x,y)\geq0$ for $(x,y)\in\R\times\R^{+}$, which implies   that $v_{b^{\star}}$ is convex and increasing in $x$.}

		\textit{Smooth fit on $y$:} Now, we study the continuity of $\parcial{}{y}v_{b^{\star}}$. Calculating the first derivative w.r.t. $y$ in \eqref{v1.1}, 
		\begin{equation}\label{vy1W}
			\parcial{}{y}v_{b^{\star}}(x,y)=
			\begin{cases}
				\sum_{j=1}^{m_{\tmp}+1}K_{j-1}\expo^{r_{j-1}[x-b^{\star}-\alpha y]} & \text{if } (x,y)\in\mathcal{W}\cup{\mathcal{S}_1}, \\
				x-\alpha y-c& \text{if } (x,y)\in{\mathcal{S}_2},
			\end{cases}
		\end{equation}
		{ it follows easily that}  $\parcial{}{y}v_{b^{\star}}$ is continuous at $x=b^{\star}$. Meanwhile
		\begin{align*}
			\parcial{}{y}v_{b^{\star}}([b^{\star}+\alpha y]{-},y)&=\sum_{j=1}^{m_{\tmp}+1}K_{j-1} = b^{\star}-c =\parcial{}{y}v_{b^{\star}}([b^{\star}+\alpha y]{+},y).
		\end{align*}
		Therefore $\parcial{}{y} v_{b^{\star}} $ is continuous at $ x=b^{\star}+\alpha y$.
		
		\textit{Step 2.} As $K_i>0$ for all $i=0,\ldots,m_{\tmp}$, then $\parcial{}{y}v_{b^{\star}}
		(x,y)>0$ for all $(x,y)\in\mathcal{W}\cup {\mathcal{S}_1}$,due to \eqref{vy1W}. In the case that $(x,y)\in {\mathcal{S}_{2}}$, we get easily that    $\parcial{}{y}v_{b^{\star}}(x,y)=x-\alpha y-c\geq b^{\star}-c>0$. Thus, $v_{b^{\star}}$ is increasing on $y$.
		
		\textit{Step 3.} From \eqref{v1.1}, observe that if $(x,y)\in \mathcal{S}_2$ and  letting  $y\downarrow 0$, then $x\geq b^{\star}$ and $v_{b^{\star}}(x,0{+})=0$. Furthermore,    we have $v_{b^{\star}}(x,0)=0$ for  $x<b^{\star}$.
		Additionally, considering the monotonicity of $v_{b^{\star}}$ w.r.t. $y$, it readily follows that $v_{b^{\star}}(x,y)\geq0$ for $(x,y)\in\R\times\R^{+}_{*}$.
		
		We  shall  now establish  that $v_{b^{\star}}$ satisfies the   inequality on the right-hand side of \eqref{l1}. For  $(x,y)\in\mathcal{W}$, notice that  $\expo^{r_i(x-b^{\star})}<1$. Then, using that  
		\begin{equation}\label{v4}
			1+z\leq \expo^{z}\quad\text{for}\   z\in\R, 
		\end{equation}
		we obtain
		\begin{align*}
			v_{b^{\star}}(x,y)&\leq \sum_{j=1}^{m_{\tmp}+1}\frac{K_{j-1}}{\alpha r_{j-1}}(1-\expo^{-\alpha r_{j-1}\, y})\leq y\sum_{j=1}^{m_{\tmp}+1}K_{j-1}\leq \overline{K}_{1}\,y(1+y)(1+|x|),
		\end{align*}
		with $\overline{K}_{1}\eqdef b^{\star}-c$. If $(x,y)\in {\mathcal{S}_1 }$, by \eqref{v4}, we get that
		\begin{align*}
			v_{b^{\star}}(x,y)&\leq \sum_{j=1}^{m_\tmp+1}\frac{K_{j-1}}{\alpha}(\alpha y {-[x-b^{\star}]})+\frac{1}{2\alpha}\left[(x-c)^{2}-(b^{\star}-c)^{2}\right]\\
			&=   {y\overline{K}_{1}+\frac{1}{2\alpha}\left\{[x-c]^{2}-\big[2\overline{K}_{1}[x-c]+[\overline{K}_{1}]^{2}\big]\right\}}\\
			&= {y\overline{K}_{1}+\dfrac{1}{2\alpha}[x-b^{\star}]^{2}}\leq \overline{K}_{2}\,y(1+y)(1+|x|),
		\end{align*}
		with $\overline{K}_{2}\eqdef {\max\{\overline{K}_{1},\frac{\alpha}{2}\}}$. Finally, for $(x,y)\in {\mathcal{S}_2}$,  taking $\overline{K}_{3}\eqdef\max\{c,\frac{\alpha}{2}\}$,  it is easy to check that $v_{b^{\star}}(x,y)\leq \overline{K}_{3}y(1+y)(1+|x|)$.
		Therefore, by setting $K\eqdef\max\{\overline{K}_{1},\overline{K}_{2},\overline{K}_{3}\}$, we confirm  that $v_{b^{\star}}$ satisfies the   inequality on the right-hand side of \eqref{l1}.\qed
	\end{proof}
	
	To provide the proof of the second part of Theorem \ref{teo1}, we will introduce a crucial lemma that is essential for this purpose. The details of the proof for this lemma can be found in Appendix \ref{B}.
	\begin{lema}\label{ru}
		Let $r_i$ be the positive roots of \eqref{eqaux}, then
		\begin{align}
			&\frac{\sigma^{2}}{2}\mathcal{R}^{m_{\tmp}}+\mu-(\rho+\lambda_{\tmn})(b^{\star}-c)\notag\\
			&\quad+\lambda_{\tmp}\sum_{k=2}^{m_{\tmp}+1}\frac{\omega_{k-1}^{\tmp}}{\beta_{k-1}^{\tmp}}+\lambda_{\tmn}\sum_{k=2}^{m_{\tmn}+1}\sum_{j=1}^{m_{\tmp}+1}\frac{\omega_{k-1}^{\tmn}\,\beta_{k-1}^{\tmn}}{\beta_{k-1}^{\tmn}+r_{j-1}}K_{j-1}=0,\label{equ1}\\
			&\frac{\sigma^{2}}{2}-\rho\sum_{j=1}^{m_{\tmp}+1}\frac{K_{j-1}}{r_{j-1}}+\mu(b^{\star}-c)\notag\\
			&\quad-\lambda_{\tmn}\sum_{k=2}^{m_{\tmn}+1}\sum_{j=1}^{m_{\tmp}+1}\frac{\omega_{k-1}^{\tmn}}{\beta_{k-1}^{\tmn}+r_{j-1}}K_{j-1}=-\lambda_{\tmp}\sum_{k=2}^{m_{\tmp}+1}\sum_{j=1}^{m_{\tmp}+1}\frac{\omega_{k-1}^{\tmp}}{\beta_{k-1}^{\tmp}-r_{j-1}}K_{j-1},\label{equ2}\\
			&\Xi^{\tmn}_{k-1}\eqdef\sum_{j=1}^{m_{\tmp}+1}\frac{K_{j-1}\beta_{k-1}^{\tmn}}{\beta_{k-1}^{n}+r_{j-1}}-(b^{\star}-c)+\frac{1}{\beta_{k-1}^{\tmn}}=\sum_{j=1}^{m_{\tmp}+1}\frac{K_{j-1}[r_{j-1}]^{2}}{{\beta^{\tmn}_{k-1}}[\beta_{k-1}^{\tmn}+r_{j-1}]},\label{equ3}
		\end{align}
		where $k=2,\ldots,m_{\tmn}+1$.
	\end{lema}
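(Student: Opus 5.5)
This lemma consists of three identities, and my plan is to derive them all from the characteristic equation \eqref{eqaux} together with the linear relations \eqref{cond1K}--\eqref{cond3K}. Identities \eqref{equ1} and \eqref{equ2} are the Taylor-type conditions for $\mathcal{D}v_{b^{\star}}$ to vanish near the free boundary $x=b^{\star}$. Identity \eqref{equ3} is a purely algebraic statement for each $\beta^{\tmn}_{k-1}$.

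\textbf{Identity \eqref{equ3}.} I start with this one because it is elementary. Put $\beta=\beta^{\tmn}_{k-1}$. The key step is the partial-fraction identity
\[
\frac{\beta}{\beta+r}=1-\frac{r}{\beta}+\frac{r^{2}}{\beta(\beta+r)}.
\]
Multiplying by $K_{j-1}$ with $r=r_{j-1}$ and summing over $j$, I use \eqref{cond1K}, which gives $\sum_j K_{j-1}=b^{\star}-c$, and \eqref{cond2K}, which gives $\sum_j r_{j-1}K_{j-1}=1$. This yields
\[
\sum_j \frac{K_{j-1}\beta}{\beta+r_{j-1}}=b^{\star}-c-\frac1\beta+\sum_j\frac{K_{j-1}r_{j-1}^2}{\beta(\beta+r_{j-1})},
\]
which is exactly \eqref{equ3}.

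\textbf{Identities \eqref{equ1} and \eqref{equ2}.} For each positive root $r=r_{j-1}$, I multiply $p(r)=0$ by $K_{j-1}/r^{2}$ and sum over $j$, and separately multiply it by $K_{j-1}/r$ and sum. The Brownian and drift terms then produce $\tfrac{\sigma^2}{2}\sum_j K_{j-1}$, $\mu\sum_j K_{j-1}/r_{j-1}$, and similar sums.

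For the $\lambda_{\tmp}$ terms I again use partial fractions,
\[
\frac{\beta}{r^{a}(\beta-r)}=\frac{1}{r^{a}}+\frac{1}{r^{a-1}(\beta-r)},
\]
and then apply \eqref{cond3K}, which reduces $\sum_j \beta_{k-1}^{\tmp}K_{j-1}/(\beta_{k-1}^{\tmp}-r_{j-1})$ to $b^{\star}-c+1/\beta^{\tmp}_{k-1}$. This reduction is what generates the term $\lambda_{\tmp}\sum_k\omega^{\tmp}_{k-1}/\beta^{\tmp}_{k-1}$ in \eqref{equ1}. For the $\lambda_{\tmn}$ terms I use the analogous expansion with $\beta+r$, but leave the sums $\sum_j K_{j-1}\beta/(\beta+r_{j-1})$ unreduced, as they appear in the statement.

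To express $\sum_j K_{j-1}/r_{j-1}$ and $\sum_j K_{j-1}/r_{j-1}^{2}$ in closed form, I go back to the defining system $A_{m_{\tmp}}(r_{j-1}^2K_{j-1})_j=\mathcal{R}^{m_{\tmp}}e_1$ from \eqref{K2}. Cramer's rule, combined with a Vandermonde/Cauchy-determinant evaluation, gives $\sum_j r_{j-1}^{2}K_{j-1}=\mathcal{R}^{m_{\tmp}}$, which accounts for the $\frac{\sigma^2}{2}\mathcal{R}^{m_{\tmp}}$ term. Equivalently, I can view $K_{j-1}$ as the residues of the rational function
\[
F(s)=\frac{\mathcal{R}^{m_{\tmp}}\prod_{k}(\beta^{\tmp}_{k}-s)}{s\prod_j(r_{j-1}-s)}\cdot(\text{normalisation}),
\]
and read off the moment sums $\sum_j K_{j-1}r_{j-1}^{\ell}$ from the expansion of $F$ at $s=0$ and at $s=\infty$. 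I then collect terms with the $\lambda_{\tmn}$ coefficient $-(\rho+\lambda_{\tmn})$ and compare the result with \eqref{equ1}; \eqref{equ2} follows the same way at the next order.

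\textbf{Main obstacle.} I expect the hardest part to be the bookkeeping of the generalized moment sums of the $K_{j-1}$, in particular showing $\sum_j r_{j-1}^2K_{j-1}=\mathcal{R}^{m_{\tmp}}$ and obtaining $\sum_j K_{j-1}/r_{j-1}$ in a form compatible with \eqref{equ2}. My first attempt will use the residue representation of $F$. Summing the residues of $F(s)s^{\ell}$ over all poles and using the decay of $F$ at infinity turns each moment into an explicit product of the $\beta^{\tmp}$ and the $r$'s, which avoids manipulating the cofactor expressions directly.
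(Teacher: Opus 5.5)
Your derivation of \eqref{equ3} is correct and is exactly the paper's. Summing $K_{j-1}p(r_{j-1})/r_{j-1}=0$, splitting $\frac{\beta}{r(\beta\pm r)}=\frac{1}{r}\mp\frac{1}{\beta\pm r}$ and using \eqref{cond3} is also how the paper obtains \eqref{equ2}.

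The genuine gap is \eqref{equ1}: the weight $K_{j-1}/r_{j-1}^{2}$ you propose cannot produce it. Summing $K_{j-1}p(r_{j-1})/r_{j-1}^{2}=0$ gives an identity that begins with $\frac{\sigma^{2}}{2}(b^{\star}-c)+\mu\sum_{j}K_{j-1}/r_{j-1}-\rho\sum_{j}K_{j-1}/r_{j-1}^{2}$. In it, the quantities $\sum_{j}K_{j-1}/(\beta^{\tmn}_{k-1}+r_{j-1})$, which none of \eqref{cond1K}--\eqref{cond3K} reduces, carry the weights $\lambda_{\tmn}\omega^{\tmn}_{k-1}/\beta^{\tmn}_{k-1}$. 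In \eqref{equ1} the same quantities carry the weights $\lambda_{\tmn}\omega^{\tmn}_{k-1}\beta^{\tmn}_{k-1}$, and in \eqref{equ2} the weights $-\lambda_{\tmn}\omega^{\tmn}_{k-1}$. Evaluating $\sum_{j}K_{j-1}/r_{j-1}$ and $\sum_{j}K_{j-1}/r_{j-1}^{2}$ in closed form, which you call the main obstacle, does nothing to remove this mismatch. So ``comparing with \eqref{equ1}'' will not close: you have formed a different linear combination of the root conditions $p(r_{j-1})=0$ from the one \eqref{equ1} encodes. Your sketch is also internally inconsistent here. You say the Brownian term yields $\frac{\sigma^{2}}{2}\sum_{j}K_{j-1}$, yet you invoke $\sum_{j}r_{j-1}^{2}K_{j-1}=\mathcal{R}^{m_{\tmp}}$ to account for $\frac{\sigma^{2}}{2}\mathcal{R}^{m_{\tmp}}$. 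A sum $\sum_{j}r_{j-1}^{2}K_{j-1}$ only arises when the weight is $K_{j-1}$ itself.

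The missing step is to use that weight, $\sum_{j}K_{j-1}p(r_{j-1})=0$, which is what the paper does. This becomes \eqref{equ1} after the following substitutions:
$\sum_{j}r_{j-1}^{2}K_{j-1}=\mathcal{R}^{m_{\tmp}}$ in the $\sigma^{2}$-term;
\eqref{cond2K} in the $\mu$-term;
\eqref{cond1K} in the $(\rho+\lambda_{\tmn}+\lambda_{\tmp})$-term;
\eqref{cond3K} applied directly, with no partial fractions, to $\sum_{j}\beta^{\tmp}_{k-1}K_{j-1}/(\beta^{\tmp}_{k-1}-r_{j-1})$;
and \eqref{cond3} to cancel the resulting $\lambda_{\tmp}(b^{\star}-c)$.

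With the weights $K_{j-1}$ and $K_{j-1}/r_{j-1}$, your ``main obstacle'' disappears. The sum $\sum_{j}K_{j-1}/r_{j-1}$ is left unreduced in \eqref{equ2}, and $\sum_{j}K_{j-1}/r_{j-1}^{2}$ never occurs. The identity $\sum_{j}r_{j-1}^{2}K_{j-1}=\mathcal{R}^{m_{\tmp}}$ is just the first row of your own system $A_{m_{\tmp}}(r_{j-1}^{2}K_{j-1})_{j}=\mathcal{R}^{m_{\tmp}}e_{1}$, i.e.\ $\det[A_{m_{\tmp}}]=\sum_{j}\Cof_{1,j}[A_{m_{\tmp}}]$ by expansion along the row of ones. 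So no Cauchy/Vandermonde evaluation or residue calculus is needed. If you still use residues, note that your $F$ has residue $-r_{j-1}K_{j-1}$ at $s=r_{j-1}$; to get $K_{j-1}$ you need $s^{-2}$ instead of $s^{-1}$.
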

	
	\begin{proof}[Proof of Theorem \ref{teo1}. Second part]
		By Proposition \ref{pro1}, it is knows that $v_{b^{\star}}$ satisfies \eqref{cond2.1} and \eqref{v2} on $\mathcal{W}$ and $\mathcal{S}$, respectively. Then, to prove that the function $v_{b^{\star}}$ satisfies \eqref{eqHJB}, it is sufficient to check  the following.
		\begin{enumerate}
			\item[(i)] If $(x,y)\in\mathcal{W}$, then
			\begin{equation}\label{v5}
				Tv_{b^{\star}}(x,y)\eqdef-\alpha \parcial{}{x}v_{b^{\star}}(x,y)-\parcial{}{y}v_{b^{\star}}(x,y)+x-c\leq0.
			\end{equation}
			\item[(ii)] If $(x,y)\in\mathcal{S}$, then 
			\begin{equation}\label{v6}
				\mathcal{D}v_{b^{\star}}(x,y)\leq0. 
			\end{equation}
			Recall that $\mathcal{D}$ is defined as in \eqref{generador}.
		\end{enumerate}
		Let $(x,y)$ be in $\mathcal{W}$. Taking into account \eqref{v1.1}, \eqref{cond1K}, \eqref{cond2K}, \eqref{vx1W}, and \eqref{vy1W}, it follows that 
		\begin{align*}
			Tv_{b^{\star}}(x,y)
			&\leq -\sum_{j=1}^{m_\tmp+1}K_{j-1}[r_{j-1}[x-b^{\star}]+1]+x-c {=0}.
		\end{align*}
		Thus \eqref{v5} holds on $\mathcal{W}$.
		
		The proof of \eqref{v6} is divided in two parts, when $(x,y)\in \mathcal{S}_1$ and $(x,y)\in \mathcal{S}_2$.
		
		\textit{Case 1.}  If $(x,y)\in\mathcal{S}_1$, using \eqref{eqaux}, \eqref{v1.1}, \eqref{vx1W} and \eqref{vxx1W}, notice that 
		\begin{align}
			&\frac{\sigma^{2}}{2}\parcial{^{2}}{x^{2}}v_{b^{\star}}(x,y)+\mu\parcial{}{x}v_{b^{\star}}(x,y)-(\rho+\lambda_\tmn+\lambda_\tmp)v_{b^{\star}}(x,y)\notag\\
			&\qquad=\sum_{j=1}^{m_{\tmp}+1}\frac{K_{j-1}}{\alpha r_{j-1}}\expo^{r_{j-1}[x-b^{\star}-\alpha y]}\notag\\
			&\qquad\quad\times\bigg[\lambda_{\tmp}\sum_{k=2}^{m_{\tmp}+1}\frac{\omega_{k-1}^{\tmp}\beta_{k-1}^{\tmp}}{\beta_{k-1}^{\tmp}-r_{j-1}}+\lambda_{\tmn}\sum_{k=2}^{m_{\tmn}+1}\frac{\omega_{k-1}^{\tmn}\beta_{k-1}^{\tmn}}{r_{j-1}+\beta_{k-1}^{\tmn}}\bigg]\notag\\
			&\qquad\quad+\frac{1}{\alpha}\bigg\{\frac{\sigma^{2}}{2}+\mu[x-c]\notag\\
			&\qquad\quad-[\rho+\lambda_\tmn+\lambda_\tmp]\bigg[\sum_{j=1}^{m_{\tmp}+1}\frac{K_{j-1}}{ r_{j-1}}+\dfrac{1}{2}[[x-c]^{2}-[b^{\star}-c]^{2}]\bigg]  \bigg\}.\label{eq3}
		\end{align}
		Meanwhile, considering again \eqref{v1.1} and using \eqref{cond3.0}, \eqref{cond3}, \eqref{cond3K}, and \eqref{equ3}, it can be checked that 
		\begin{align}
			&\int_{0}^{\infty}v_{b^{\star}}(x-z,y)\der P_{\tmn}(z)\notag\\ 
			&\quad=-\dfrac{1}{\alpha}\sum_{k=2}^{m_{\tmn}+1}\dfrac{\omega^{\tmn}_{k-1}\,\Xi^{\tmn}_{k-1}}{\beta^{\tmn}_{k-1}}\expo^{-\beta_{k-1}^{\tmn}[x-b^{\star}]}+ \sum_{j=1}^{m_{\tmp}+1}\frac{K_{j-1}}{\alpha r_{j-1}} \notag\\
			&\quad\quad- \sum_{j=1}^{m_{\tmp}+1}\frac{K_{j-1}}{\alpha r_{j-1}}\expo^{-r_{j-1} [b^{\star}-x+\alpha  y]}\sum_{k=2}^{m_{\tmn}+1}\frac{\beta_{k-1}^{\tmn}\omega^{\tmn}_{k-1}}{\beta^{\tmn}_{k-1} +r_{j-1}}\notag\\
			&\quad\quad+\sum_{k=2}^{m_{\tmn}+1}\frac{\omega^{\tmn}_{k-1}}{ \alpha  [\beta^{\tmn}_{k-1} ]^{2}}-[x-c]\sum_{k=2}^{m_{\tmn}+1}\dfrac{\omega^{\tmn}_{k-1}}{\alpha  \beta^{\tmn}_{k-1}}+\dfrac{[x-c]^{2}-[b^{\star}-c]^{2}}{2\alpha },\\
			&\int_{0}^{\infty}v_{b^{\star}}(x+z,y)\der P_{\tmp}(z)\notag\\
			&\quad=\sum_{j=1}^{m_{\tmp}+1}\frac{K_{j-1}}{\alpha r_{j-1}}-\sum_{j=1}^{m_{\tmp}+1}\frac{K_{j-1}}{\alpha r_{j-1}}\expo^{r_{j-1} [x-[b^{\star} +\alpha y]]}\sum_{k=2}^{m_{\tmp}+1}\frac{\omega^{\tmp}_{k-1}\beta^{\tmp}_{k-1}}{\beta^{\tmp} _{k-1}-r_{j-1}}\notag\\
			&\quad\quad+\sum_{k=2}^{m_{\tmp}+1}\frac{\omega^{\tmp}_{k-1}}{\alpha  [\beta^{\tmp}_{k-1}]^{2}} + [x-c]\sum_{k=2}^{m_{\tmp}+1}\frac{\omega^{\tmp}_{k-1} }{\alpha  \beta^{\tmp}_{k-1}}+\frac{[x-c]^{2}-[b^{\star}- c]^{2}}{2 \alpha}.\label{eq5}
		\end{align}
		Using \eqref{generador} and taking into account \eqref{eq3}--\eqref{eq5}, we see that  $\mathcal{D}v_{b^{\star}}(x,y)=H_{1}(x)$ for $(x,y)\in\mathcal{S}_{1}$, where
		\begin{align*}
			&H_{1}(x)\eqdef\frac{1}{\alpha}\bigg\{\frac{\sigma^{2}}{2}+\lambda_{\tmn}\sum_{k=2}^{m_{\tmn}+1}\frac{\omega^{\tmn}_{k-1}}{[\beta^{\tmn}_{k-1}]^{2}}+\lambda_{\tmp}\sum_{k=2}^{m_{\tmp}+1}\frac{\omega^{\tmp}_{k-1}}{ [\beta^{\tmp}_{k-1}]^{2}}\notag\\
			&\quad+[x-c]\bigg[\mu-\lambda_{\tmn}\sum_{k=2}^{m_{\tmn}+1}\dfrac{\omega^{\tmn}_{k-1}}{ \beta^{\tmn}_{k-1}}+\lambda_{\tmp}\sum_{k=2}^{m_{\tmp}+1}\frac{\omega^{\tmp}_{k-1} }{ \beta^{\tmp}_{k-1}}\bigg]\notag\\
			&\quad-\rho\bigg[\sum_{j=1}^{m_{\tmp}+1}\frac{K_{j-1}}{ r_{j-1}}+\dfrac{1}{2}[[x-c]^{2}-[b^{\star}-c]^{2}]\bigg]-\lambda_{\tmn}\sum_{k=2}^{m_{\tmn}+1}\dfrac{\omega^{\tmn}_{k-1}\,\Xi^{\tmn}_{k-1}}{ {\beta^{\tmn}_{k-1}}}\expo^{-\beta_{k-1}^{\tmn}[x-b^{\star}]}\bigg\}.
		\end{align*}
		On the other hand, observe that
		\begin{align*}
			&H_1(b^{\star}+)
			=\frac{1}{\alpha}\bigg\{ \frac{\sigma^{2}}{2}+\mu(b^{\star}-c)-\rho\sum_{j=1}^{m_\tmp+1}\frac{K_{j-1}}{r_{j-1}}\notag\\
			&\quad+\lambda_\tmp\sum_{k=2}^{m_\tmp+1}\frac{\omega^{\tmp}_{k-1}}{\beta_{k-1}^{\tmp}}\bigg[\dfrac{1}{\beta_{k-1}^{\tmp}}+b^{\star}-c\bigg]-\lambda_\tmn\sum_{k=2}^{m_{\tmn}+1}\frac{\omega^{\tmn}_{k-1}}{\beta_{k-1}^{\tmn}}\bigg[\Xi^{\tmn}_{k-1}-\frac{1}{\beta_{k-1}^{\tmn}}+b^{\star}-c\bigg]\bigg\}.
		\end{align*}
		Considering \eqref{cond1K}, \eqref{cond2K} and  \eqref{equ3}, notice that 
		\begin{equation}\label{eq6}
			\Xi^{\tmn}_{k-1}-\frac{1}{\beta_{k-1}^{\tmn}}+b^{\star}-c=\sum_{j=1}^{m_{\tmp}+1}\frac{K_{j-1}\beta^{\tmn}_{k-1}}{\beta^{\tmn}_{k-1}+r_{j+1}}
		\end{equation}
		for $k\in\{2,\dots,m_{\tmp}+1\}$. Then, from here and \eqref{equ2}, it follows that 
		\begin{align*}
			H_{1}(b^{\star}+)
			=&-\dfrac{\lambda_{\tmp}}{\alpha}\sum_{k=2}^{m_\tmp+1}\frac{\omega_{k-1}^{\tmp}}{\beta_{k-1}^{\tmp}}\left[\sum_{j=1}^{m_\tmp+1}\frac{\beta_{k-1}^{\tmp}K_{j-1}}{\beta_{k-1}^{\tmp}-r_{j-1}}-[b^{\star}-c]-\frac{1}{\beta_{k-1}^{\tmp}}\right]=0,
		\end{align*}
		where the last equality holds due to \eqref{cond3K}. Additionally, by \eqref{equ1} and using again \eqref{eq6}, we get that
		\begin{align*}
			H^{\prime}_1(x)&=\frac{1}{\alpha}\Bigg[ \mu-\lambda_\tmn\sum_{k=2}^{m_\tmn+1}\frac{\omega^{\tmn}_{k-1}}{\beta_{k-1}^{\tmn}}+\lambda_\tmp\sum_{k=2}^{m_\tmp+1}\frac{\omega^{\tmp}_{k-1}}{\beta_{k-1}^{\tmp}}\notag\\
			&\quad	-\rho(x-c)
			+\lambda_\tmn\sum_{k=2}^{m_\tmn+1}\omega_{k-1}^{\tmn}\Xi^{\tmn}_{k-1}\expo^{-\beta_{k-1}^{\tmn}(x-b^{\star})}\Bigg]\\
			&=-\frac{1}{\alpha}\Bigg[\lambda_\tmn\sum_{k=2}^{m_\tmn+1}\omega_{k-1}^{\tmn}\Xi^{\tmn}_{k-1}[1-\expo^{-\beta_{k-1}^{\tmn}(x-b^{\star})}]+\rho(x-b^{\star})+\frac{\sigma^{2}}{2}\mathcal{R}^{m_{\tmp}}\Bigg].
		\end{align*}	
		From here and since $\mathcal{R}^{m_{\tmp}}>0$, $\Xi^{\tmn}_{k-1}>0$ for each $k\in\{1,\ldots,m_\tmn+1\}$ (due to \eqref{equ3}), and $x>b^{\star}$, it yields that $H'_{1}<0$ on $ [b^{\star},\infty)$. Therefore,  $H_1(x)$ is decreasing for $x>b^{\star}$ satisfying $H_1(b^{\star}+)=0$. Hence $\mathcal{D}v_{b^{\star}}(x,y)=H_{1}(x)<0$ for   $(x,y)\in\mathcal{S}_1$.
		
		\textit{Case 2.}
		If $(x,y)\in\mathcal{S}_2$, considering \eqref{v1.1}, notice that 
		\begin{multline}\label{eq7}
			\frac{\sigma^{2}}{2}\parcial{^{2}}{x^{2}}v_{b^{\star}}(x,y)+\mu\parcial{}{x}v_{b^{\star}}(x,y)-(\rho+\lambda_{\tmn}+\lambda_{\tmp})v_{b^{\star}}(x,y)\\
			=\mu y-[\rho+\lambda_{\tmn}+\lambda_{\tmp}]\left[[x-c]y-\frac{\alpha}{2}y^{2}\right].
		\end{multline}
		Meanwhile, considering again \eqref{v1.1} and using \eqref{cond3.0}, \eqref{cond3}, and \eqref{cond3K}, it can be checked that 
		\begin{align}
			\int_{0}^{\infty}v_{b^{\star}}(x-z,y)\der P_{\tmn}(z)&=[x-c]y-\dfrac{\alpha  y^{2}}{2}-y\sum_{k=2}^{m_{\tmn}+1}\frac{\omega^{\tmn}_{k-1}}{ \beta^{\tmn}_{k-1} }\notag\\
			&\quad\hspace{-0.5cm}-\dfrac{1}{\alpha}\sum_{k=2}^{m_{\tmn}+1}\frac{\omega^{\tmn}_{k-1}\Xi^{\tmn}_{k-1}}{\beta^{\tmn}_{k-1}}\expo^{-\beta_{k-1}^{\tmn}[x-b^{\star}]}[1-\expo^{-\beta_{k-1}^{\tmn}\alpha  y}],\\
			\int_{0}^{\infty}v_{b^{\star}}(x+\omega,y)\der P_{\tmp}(z)
			&=[x-c]y-\frac{\alpha}{2}y^{2}+y\sum_{k=2}^{m_{\tmp}+1}\frac{\omega_{k-1}^{\tmp}}{\beta_{k-1}^{\tmp}}.\label{eq8}
		\end{align}
		From \eqref{eq7}--\eqref{eq8}, we get that  $\mathcal{D}v_{b^{\star}}(x,y)=H_{2}(x,y)$ for $(x,y)\in\mathcal{S}_{2}$, with
		\begin{align*}
			H_2(x,y)&\eqdef\mu y-\rho\bigg[[x-c]y-\alpha\frac{y^{2}}{2}\bigg]+y\bigg[\lambda_\tmp\sum_{k=2}^{m_\tmp+1}\frac{\omega_{k-1}^{\tmp}}{\beta_{k-1}^{\tmp}}-\lambda_\tmn\sum_{k=2}^{m_\tmn+1}\frac{\omega_{k-1}^{\tmn}}{\beta_{k-1}^{\tmn}}\bigg]\\
			&\quad-\frac{\lambda_\tmn}{\alpha}\sum_{k=2}^{m_\tmn+1}\frac{\omega_{k-1}^{\tmn}\Xi^{\tmn}_{k-1}}{\beta_{k-1}^{\tmn}}\expo^{-\beta_k^{\tmn}[x-b^{\star}]}[1-\expo^{-\alpha \beta_{k-1}^{\tmn}y}]\notag\\
			&=-\rho y\bigg[x-b^{\star}-\alpha\frac{y}{2}\bigg]-y\mathcal{R}^{m_{\tmp}}\frac{\sigma}{2}\notag\\ &\quad-\lambda_\tmn\sum_{k=2}^{m_{\tmn}+1}\Xi^{\tmn}_{k-1}\omega_{k-1}^{\tmn}\bigg[y+\frac{[1-\expo^{-\alpha\beta_k^{\tmn}y}]\expo^{-\beta_k^{\tmn}[x-b^{\star}]}}{\alpha\beta_{k-1}^{\tmn}}\bigg],
		\end{align*}
		where the last equality is obtained due to  \eqref{equ1} and \eqref{equ3}. Therefore, since $x>b^{\star}+\alpha y$, we conclude  $\mathcal{D}v_{b^{\star}}(x,y)=H_2(x,y)<0$ for all $(x,y)\in\mathcal{S}_2$.
		
		Since $\xi^{b^{\star}}$ (as in \eqref{c1} when $b=b^{\star}$) satisfies the conditions \eqref{eo1} and \eqref{eo2}, and $v_{b^{\star}}$ is the solution of the HJB equation \eqref{eqHJB} which satisfies the hypotheses of Proposition \ref{TV}, we conclude that $v_{b^{\star}}(x,y)=V(x,y)$  for all $(x,y)\in\R\times\R^{+}_{*}$.\qed
	\end{proof}

	\section{A related optimal stopping problem}\label{stop1}
	
	In this section, we aim to explore the relationship between the directional derivative $u:=\alpha \frac{\partial}{\partial x}V+\frac{\partial}{\partial y}V$, with $V$ as in \eqref{v1.1}, and the stopping time problem given by
	\begin{equation}\label{st1}
		U(x)\eqdef\sup _{\tau \in\mathscr{S}} \mathbb{E}^{x}\big[\expo^{-\tau \rho}\left[X_\tau-c\right]\big], \quad x \in \R,
	\end{equation}
	 where $\mathscr{S}$ denotes the class of all $\F$-stopping times. By applying dynamic programming, we derive the variational inequality associated with \eqref{st1}, which takes the following form
	 \begin{equation}\label{EDu1}
	 	\max\left\{\Gamma w(x),x-c-w(x)\right\}=0\quad \text{for}\ x\in\R \blue{,}
	 \end{equation}	 
	 where 
	\begin{align*}
		\Gamma w(x)&\eqdef\frac{\sigma^{2}}{2}w''(x)+\mu w'(x)-(\rho+\lambda_{\tmn}+\lambda_{\tmp})w(x)\notag\\
		&\quad+\lambda_{\tmn}\int_{0}^{\infty}w(x-z)\der P_{\tmn}(z)+\lambda_{\tmp}\int_{0}^{\infty}w(x+z)\der P_{\tmp}(z).
	\end{align*}	 
	 Here, $w'$ and $w''$ represent the first and second derivatives  of $w$, respectively. 
	 
	 Before presenting the main result of this section,  Proposition \ref{st2}, notice that from \eqref{vx1W} and \eqref{vy1W}, it follows that
	 \begin{equation}\label{st1.1}
	 	\alpha \frac{\partial}{\partial x}V(x,y)+\frac{\partial}{\partial y}V_{y}(x,y)=
	 	\begin{cases}
	 	\sum_{j=1}^{m_{\tmp}+1}K_{j-1}\expo^{r_{j-1}[x-b^{\star}]} & \text{ if   } x<b^{\star},\\
	 	x-c & \text{ if } x\geq b^{\star},
	 	\end{cases}
	 	 \end{equation} 
	 with $b^{\star}$ as in \eqref{b}. This  shows  that the directional derivative $u$ depends solely on the variable $x$.
	 \begin{prop}\label{st2}
	 	The directional derivative $u$ is a $[\sob^{2,
	 	\infty}_{\loc}(\R)\cap\hol^{1}(\R)]$-solution to the variational inequality \eqref{EDu1}. It is both increasing and convex. Consequently, $u$ coincides with the value function $U$  defined in \eqref{st1} across $\R$. Furthermore, the stopping time 
	 	\begin{equation}\label{tau1}
	 		\tau(x)\eqdef\inf\{t\geq0:X_{t}\geq b^{\star}\}
	 	\end{equation}
	 	is optimal for the problem stated in \eqref{st1}.
	 \end{prop}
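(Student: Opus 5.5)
The plan is to work directly with the explicit formula \eqref{st1.1}: $u(x)=\sum_{j=1}^{m_{\tmp}+1}K_{j-1}\expo^{r_{j-1}[x-b^{\star}]}$ for $x<b^{\star}$ and $u(x)=x-c$ for $x\geq b^{\star}$. First I check regularity and shape. At $x=b^{\star}$, \eqref{cond1K} gives $u(b^{\star}-)=b^{\star}-c=u(b^{\star}+)$, and \eqref{cond2K} gives $u'(b^{\star}-)=\sum_j r_{j-1}K_{j-1}=1=u'(b^{\star}+)$, so $u\in\hol^{1}(\R)$. The second derivative is $\sum_j r_{j-1}^2K_{j-1}\expo^{r_{j-1}[x-b^{\star}]}$ on the left and $0$ on the right. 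It is bounded on compacts but generally jumps at $b^{\star}$, hence $u\in\sob^{2,\infty}_{\loc}(\R)$. Since $K_{j-1}>0$ (by \eqref{K2}) and $r_{j-1}>0$, both $u'$ and $u''$ are nonnegative a.e., so $u$ is increasing and convex.

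Next I verify \eqref{EDu1}, avoiding a fresh computation of the jump integrals. The key observation is that $\mathcal{D}$ acts only in $x$, with constant coefficients and $y$ entering as a parameter. Hence, for $v_{b^{\star}}\in\hol^{2,1}$ with enough smoothness off the boundary lines, $\Gamma u(x)=(\alpha\partial_x+\partial_y)\mathcal{D}v_{b^{\star}}(x,y)$ pointwise away from the boundaries. Here the growth bounds \eqref{l1} are used to differentiate under the jump integrals by dominated convergence.
\begin{itemize}
\item On $x<b^{\star}$, $\mathcal{D}v_{b^{\star}}\equiv0$ on the open set $\mathcal{W}$, so $\Gamma u(x)=0$.
\item On $x>b^{\star}$, choose $y$ large so that $(x,y)\in\mathcal{S}_1$. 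There $\mathcal{D}v_{b^{\star}}=H_1(x)$ from the second part of Theorem \ref{teo1}, so $\Gamma u(x)=\alpha H_1'(x)<0$.
\end{itemize}
As a fallback, if this commuting argument is judged too delicate, I would compute $\Gamma u$ directly using \eqref{eqaux}, \eqref{cond3K} and \eqref{equ1}--\eqref{equ3}, mirroring Case 1.

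For the obstacle part, $u=x-c$ on $[b^{\star},\infty)$. On $x<b^{\star}$, \eqref{cond1K}--\eqref{cond2K} give
\[
u(x)-(x-c)=\sum_j K_{j-1}\big[\expo^{r_{j-1}(x-b^{\star})}-1-r_{j-1}(x-b^{\star})\big]\geq0
\]
by \eqref{v4}. This is exactly the inequality $Tv_{b^{\star}}\le 0$ already shown on $\mathcal{W}$. Therefore $\max\{\Gamma u,\,x-c-u\}=0$ on $\R$ (a.e.\ at the single point $b^{\star}$).

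Finally comes the verification that $u=U$ and that $\tau(x)$ is optimal. Since $u\in\sob^{2,\infty}_{\loc}\cap\hol^1$, the Itô--Meyer formula (or mollification of $u$ plus a limit, as in Lemma \ref{Exp1}) applies to $\expo^{-\rho t}u(X^0_t)$. This gives, for any stopping time $\tau$ and localizing $\tau_N$,
\[
u(x)=\E^x\big[\expo^{-\rho(\tau\wedge\tau_N)}u(X^0_{\tau\wedge\tau_N})\big]-\E^x\Big[\int_0^{\tau\wedge\tau_N}\expo^{-\rho s}\Gamma u(X^0_s)\der s\Big].
\]
With $\Gamma u\le0$ and $u\geq x-c$ this yields $u(x)\geq\E^x[\expo^{-\rho(\tau\wedge\tau_N)}(X^0_{\tau\wedge\tau_N}-c)]$. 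I then let $N\to\infty$, using the linear growth $|u(x)|\le C(1+|x|)$ and the bound \eqref{pVd4} on $\E^x[\sup_t\expo^{-\rho t}|X^0_t|]$ for domination. On $\{\tau=\infty\}$ I interpret the payoff as $0$: $\expo^{-\rho t}u(X^0_t)\to0$ because $u$ is bounded on $(-\infty,b^{\star}]$ and $\expo^{-\rho t}X^0_t\to0$ a.s. This gives $u\geq U$. For $\tau(x)$, $\Gamma u(X^0_s)=0$ for $s<\tau(x)$ and $u(X^0_{\tau(x)})=X^0_{\tau(x)}-c$ (overshoot by upward jumps lands where $u=x-c$), so equality holds and $u=U$.

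I expect the main obstacle to be justifying the interchange $\Gamma u=(\alpha\partial_x+\partial_y)\mathcal{D}v_{b^{\star}}$, in particular differentiating the nonlocal integrals across the kink lines. Here I would lean on the explicit exponential form and \eqref{l1} for domination. The transversality on $\{\tau=\infty\}$ is a secondary point.
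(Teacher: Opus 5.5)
Your proposal is correct, but it computes $\Gamma u$ by a different route from the paper.

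\textbf{How the paper does it.} The paper verifies \eqref{EDu1} by computing $\Gamma u$ directly from the explicit form of $u$. It evaluates the jump integrals of $u$ on each side of $b^{\star}$ in \eqref{InuW} and \eqref{IpuW}; the latter needs \eqref{cond3K} to cancel the boundary terms. Since the $r_{j-1}$ are roots of \eqref{eqaux}, this gives $\Gamma u=0$ on $(-\infty,b^{\star})$. On $[b^{\star},\infty)$ the paper obtains the explicit expression \eqref{Ux}, shows $\Gamma u(b^{\star}+)=-\frac{\sigma^{2}}{2}\mathcal{R}^{m_{\tmp}}$ via \eqref{equ1}, and shows the derivative is negative.

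\textbf{How your route differs.} You use that $\mathcal{D}$ is translation-invariant in $x$ with $y$ a parameter. Hence $\Gamma u=(\alpha\partial_x+\partial_y)\mathcal{D}v_{b^{\star}}$ at interior points of each region. You then read off $\Gamma u=0$ from $\mathcal{D}v_{b^{\star}}\equiv0$ on $\mathcal{W}$. For $x>b^{\star}$ you take $y>(x-b^{\star})/\alpha$, so $(x,y)$ is interior to $\mathcal{S}_1$, and read off $\Gamma u=\alpha H_1'<0$ from Case 1 of Theorem \ref{teo1}. This checks out: the paper's first display of $\alpha H_1'(x)$ is exactly the right-hand side of \eqref{Ux}, once the bracket there is recognized as $\Xi^{\tmn}_{i-1}$.

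\textbf{What each buys.} Your route avoids recomputing the jump integrals of $u$. It also shows structurally that the obstacle problem for $u$ is the $(\alpha,1)$-directional derivative of the HJB equation. The paper's route is self-contained at the level of $u$ and makes the jump of $\Gamma u$ at $b^{\star}$ explicit.

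\textbf{The interchange is easier than you fear.} Only first derivatives of $v_{b^{\star}}$ enter the nonlocal terms. By \eqref{vx1W}--\eqref{vy1W}, $\partial_x v_{b^{\star}}$ and $\partial_y v_{b^{\star}}$ are continuous everywhere with at most linear growth. Note that this bound comes from those explicit formulas, not from \eqref{l1}, which controls only $v_{b^{\star}}$ itself. So dominated convergence against the hyper-exponential laws suffices. The kinks only affect higher derivatives in the local part, which you evaluate at interior points.

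\textbf{Verification step.} Here too your argument is somewhat different and arguably cleaner. You use localization, domination by $c+\sup_t\expo^{-\rho t}|X^0_t|$ via \eqref{pVd4}, and an explicit treatment of $\{\tau=\infty\}$. The paper instead uses a Fatou/supermartingale argument and leaves $\{\tau=\infty\}$ implicit.

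Two small remarks. Since $0<u'\le1$, $u$ is $1$-Lipschitz, so the Brownian and compensated-jump integrals are true martingales even without localization. And since $\sigma>0$, the occupation time of $\{b^{\star}\}$ is Lebesgue-null, which justifies ignoring the undefined value of $\Gamma u$ at $b^{\star}$.
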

	 \begin{rem}
	 	The stopping time $\tau(x )$ represents the optimal moment to extract an additional unit of the commodity. At $\tau(x)$  the underlying process satisfies the economic condition that the marginal expected optimal profit, $u$ equals the marginal instantaneous net profit from extraction, $x-c$.
	 \end{rem}
	 The Sobolev space $\mathrm{W}^{k, p}(\mathcal{O})$, with $\mathcal{O}\subseteq\R$ and $1\leq p<\infty$, is defined as the class of functions $f \in \Lp^p(\mathcal{O})$ that possess weak or distributional partial derivatives $f^{(m)}$ (see \cite[p. 22]{AF2003}) and have finite norm defined by $\|f\|_{\sob^{k, p}(\mathcal{O})}^p=\sum_{m=0}^k \left\| f^{(m)}\right\|_{\Lp^p(\mathcal{O})}^p=\sum_{m=0}^k \int_{\mathcal{O}}|f^{(m)}|^{p}\der x.$ Then, the space $\mathrm{W}_{\text {loc }}^{k, p}(\mathcal{O})$ consists of functions whose $\mathrm{W}^{k, p}$-norm is finite on any compact subset of $\mathcal{O}$. When $p=\infty$, the $\mathrm{W}^{k, \infty}$-norm is $||f||_{\sob^{k,\infty}(\set)}\eqdef\max_{m\in\{0,\dots,k\}}\{||f^{(m)}||_{\Lp^{\infty}(\set)}\}$. Moreover, the Sobolev and Lipschitz spaces are related. In particular, $\mathrm{W}_{\text {loc }}^{k, \infty}(\mathcal{O})=\mathrm{C}_{\text {loc }}^{k-1,1}(\mathcal{O})$ and $\mathrm{W}^{k, \infty}(\mathcal{O})=$ $\mathrm{C}^{k-1,1}(\overline{\mathcal{O}})$; for further details, see, e.g. \cite{AF2003}. 
	 
	 \begin{proof}[Proof of Proposition \ref{st2}]  
	 	We will begin by proving that the directional derivative $u$ as in \eqref{st1.1} is a $[\sob^{2,\infty}_{\loc}(\R)\cap\hol^{1}(\R)]$-solution to \eqref{EDu1}. Notice that  $u\in\hol^{1} (\R)$, due to \eqref{cond2K}. However,  $u\in\hol^{2} (\R\setminus\{b^{\star}\})$, because of $u''(b^{\star}-)=\sum_{j=0}^{m_{\tmp}+1}K_{j-1}r^{2}_{j-1}>0=u''(b^{\star}+)$.  Additionally, we have that $u,u',u''\in\Lp^{\infty}_{\loc}(\R)$, and that $u$ is an increasing and convex function on $\R$.
	 	
	 	Let us consider $x<b^{\star}$. We can express $\Gamma u$  as follows
	 		\begin{align}\label{DuW}
	 			\Gamma u(x)&=\frac{\sigma^{2}}{2}\sum_{j=1}^{m_{\tmp}+1}r_{j-1}^{2}K_{j-1}\expo^{r_{j-1}[x-b^{\star}]}\notag\\
	 			&\quad+\mu\sum_{j=1}^{m_{\tmp}+1}r_{j-1}K_{j-1}\expo^{r_{j-1}[x-b^{\star}]}-[\rho+\lambda_{\tmp}+\lambda_{\tmp}]\sum_{j=1}^{m_{\tmp}+1}K_{j-1}\expo^{r_{j-1}[x-b^{\star}]}\notag\\
	 			&\quad+\lambda_{\tmn}\int_{0}^{\infty}u(x-z)\der P_{\tmn}(z)+\lambda_{\tmp}\int_{0}^{\infty}u(x+z)\der P_{\tmp}(z).
	 		\end{align}
	 		On the other hand, using \eqref{cond3.0}, observe that
	 		\begin{align}
	 			\int_{0}^{\infty}u(x-z)\der P_{\tmn}(z)
	 			&=\sum_{j=1}^{m_\tmp+1}\sum_{i=2}^{m_\tmn+1}\frac{\omega_{i-1}^{\tmn}\beta_{i-1}^{\tmn}}{\beta_{i-1}^{\tmn}+r_{j-1}}K_{j-1}\expo^{r_{j-1}[x-b^{\star}]},\label{InuW}\\
	 			\int_{0}^{\infty}u(x+z)\der P_{\tmp}(z)&=\int_{0}^{b^{\star}-x}u(x+z)\der P_{\tmp}(z)+\int_{b^{\star}-x}^{\infty}u(x+z)\der P_{\tmp}(z)\notag\\
	 			&=\sum_{j=1}^{m_{\tmp}+1}\sum_{i=2}^{m_{\tmp}+1}\frac{\omega_{i-1}^{\tmp}\beta_{i-1}^{\tmp}}{\beta_{i-1}^{\tmp}-r_{j-1}}K_{j-1}\expo^{r_{j-1}[x-b^{\star}]}.\label{IpuW}
	 		\end{align}
	 		Applying \eqref{InuW}--\eqref{IpuW} in \eqref{DuW} and considering that $r_{j-1}$ satisfies  \eqref{eqaux} for each $j\in\{1,\dots,m_{\tmp}+1\}$, it gives that 
	 		\begin{align}\label{st5}
	 			\Gamma u(x)&=\sum_{j=1}^{m_{\tmp}+1}K_{j-1}\expo^{r_{j-1}[x-b^{\star}]}\Bigg[\frac{\sigma^{2}}{2}r_{j-1}^{2}+\mu r_{j-1}-[\rho+\lambda_{\tmp}+\lambda_{\tmp}]\notag\\
	 			&\quad+\lambda_n\sum_{i=2}^{m_{\tmn}+1}\frac{\omega_{i-1}^{\tmn}\beta_{i-1}^{\tmn}}{\beta_{i-1}^{\tmn}+r_{j-1}}+\lambda_{\tmp}\sum_{i=2}^{m_{\tmp}+1}\frac{\omega_{i-1}^{\tmp}\beta_{i-1}^{\tmp}}{\beta_{i-1}^{\tmp}-r_{j-1}}\Bigg]=0.
	 		\end{align}
	 		Meanwhile, as  discussed in the second part of the proof of Theorem \ref{teo1},  it is known that  $x-c-u(x)<0$.  
	 		
	 		Let us  now consider  $x\geq b^{\star}$. It has immediately that $x-c-u(x)=0$ due to  \eqref{v2}. We will prove that $\Gamma u(x)<0$.   Since 
	 		\begin{align*}
	 			\int_{0}^{\infty}u(x+z)\der P_{\tmp}(z)&=
	 			x-c+\sum_{i=2}^{m_{\tmp}+1}\frac{\omega_{i-1}^{\tmp}}{\beta_{i-1}^{\tmp}},\\
	 				\int_{0}^{\infty}u(x-z)\der P_{\tmn}(z)&=\int_{0}^{x-b^{\star}}u(x-z)\der P_{\tmn}(z)+\int_{x-b^{\star}}^{\infty}u(x-z)\der P_{\tmn}(z)\notag\\
	 				&=\sum_{i=2}^{m_\tmn+1}\omega_{i-1}^{\tmn}\expo^{-\beta_{i-1}^{\tmn}[x-b^{\star}]}\Bigg[\frac{1}{\beta_{i-1}^{\tmn}}-(b^{\star}-c)\notag\\
	 				&\quad+\sum_{j=1}^{m_{\tmp}+1}\frac{\beta_{i-1}^{\tmn}}{\beta_{i-1}^{\tmn}+r_{j-1}}K_{j-1}\Bigg]+x-c-\sum_{i=2}^{m_{\tmn}+1}\frac{\omega_{i-1}^{\tmn}}{\beta_{i-1}^{\tmn}},
	 		\end{align*}
	 		we get that 
	 		\begin{align}\label{Ux}
	 			\Gamma u(x)&=\mu-\rho[x-c]-\lambda_{\tmn}\sum_{i=2}^{m_{\tmn}+1}\frac{\omega_{i-1}^{\tmn}}{\beta_{i-1}^{\tmn}}+\lambda_{\tmp}\sum_{i=2}^{m_{\tmp}+1}\frac{\omega_{i-1}^{\tmp}}{\beta_{i-1}^{\tmp}}\notag\\
	 			&\quad+\lambda_{\tmn}\sum_{i=2}^{m_\tmn+1}\omega_{i-1}^{\tmn}\expo^{-\beta_{i-1}^{\tmn}(x-b^{\star})}\Bigg[\frac{1}{\beta_{i-1}^{\tmn}}-[b^{\star}-c]+\sum_{j=1}^{m_{\tmp}+1}\frac{\beta_{i-1}^{\tmn}}{\beta_{i-1}^{\tmn}+r_{j-1}}K_{j-1}\Bigg].	
	 		\end{align}
	 		Considering \eqref{equ1}, we observe that
	 		\begin{align*}
	 			\Gamma u(b^{\star}+)&=\mu-(\rho+\lambda_{\tmn})(b^{\star}-c)\notag\\
	 			&\quad+\lambda_{\tmp}\sum_{i=1}^{m_{\tmp}+1}\frac{\omega_{i-1}^{\tmp}}{\beta_{i-1}^{\tmp}}+\lambda_{\tmp}\sum_{i=2}^{m_\tmn+1}
	 			\sum_{j=1}^{m_{\tmp}+1}\frac{\omega_{i-1}^{\tmn}\beta_{i-1}^{\tmn}}{\beta_{i-1}^{\tmn}+r_{j-1}}K_{j-1}=-\frac{\sigma^{2}}{2}\mathcal{R}^{m_{\tmp}}<0.
	 		\end{align*}
	 		Additionally,  applying first derivative to \eqref{Ux} and taking into account \eqref{cond1K}--\eqref{cond2K} , it give that
	 		\begin{align*}
	 			[\Gamma u]^{\prime}(x)
	 			&=-\rho-\lambda_{\tmn}\sum_{i=2}^{m_{\tmn}+1}\omega_{i-1}^{\tmp}\expo^{-\beta_{i-1}^{\tmp}(x-b^{\star})}\Bigg[\sum_{j=1}^{m_{\tmp}+1}r_{j-1}K_{j-1}\notag\\
	 			&\quad-\beta_{i-1}^{\tmp}\sum_{j=1}^{m_{\tmp}+1}K_{j-1}+\sum_{j=1}^{m_{\tmp}+1}\frac{[\beta_{i-1}^{\tmn}]^{2}}{\beta_{i-1}^{\tmn}+r_{j-1}}K_{j-1}\Bigg]\\
	 			&=-\rho-\sum_{i=2}^{m_\tmn+1}\sum_{j=1}^{m_{\tmp}+1}\expo^{-\beta_{i-1}^{\tmn}(x-b^{\star})}\frac{\omega_{i-1}^{\tmn}r_{j-1}^{2}}{\beta_{i-1}^{\tmn}+r_{j-1}}K_{j-1}<0.
	 		\end{align*}
	 		This implies that $\Gamma u$ is a negative  and decreasing function on $[b^{\star},\infty)$. Since $\Gamma u(b^{\star}-)>\Gamma u(b^{\star}+)$, indicating  a discontinuity at $b^{\star}$, we conclude that $u$ as in \eqref{st1.1} is an increasing and convex function which is a $[W_{\loc}^{2,\infty}\left(\R\right)\cap\hol^{1}(\R)]$-solution to the variational inequality \eqref{EDu1}.
	 		
	 		To verify that $u$ coincides with $U$ on $\R$  and that $\tau^{\star}\eqdef\tau(x)$ as in \eqref{tau1} is an optimal stopping for the problem \eqref{st1}, it suffices  to verify the following two condition: (i) $u(x)=\E^{x}\big[\expo^{-\tau^{\star}\rho}[X_{\tau^{\star}}-c]\big]$, and   (ii) $u(x)\geq\E^{x}\big[\expo^{-\tau\rho}[X_{\tau}-c]\big]$  for any $\tau\in\mathscr{S}$.
	 		
	 		Applying integration by parts to $\expo^{-\rho t}u(X_{t})$ and Meyer-It\^o  formula to $u(X_{t})$  (due to the convexity of $u$, the continuity of $u''$ on $\R\setminus\{b^{\star}\}$ and the existence of  finite lateral limits of $u''$ at $b^{\star}$);  see Corollaries II.2, IV.1 and Theorem IV.70 \cite{P2005},  then	 
	 		\begin{align}\label{st3}
	 			u(x)&=\expo^{-\rho t}u(X_{t})-\int_{0}^{t}\expo^{-\rho s}\Gamma u(X_{s-})\der s-\widetilde{M}_{t},
	 		\end{align}
	 		where 
	 		\begin{align*}
	 			\widetilde{M}_{t}&\eqdef\int_{0}^{t}\sigma\expo^{-\rho s}u'(X_{s-})\der W_{s}\notag\\
	 			&\quad+ \int_{0}^{t}\int_{0}^{\infty}\expo^{-\rho s}\big(u(X_{s-}-z)-u(X_{s-})\big)(\widetilde{N}^{\tmn}(dz\times \der s)-\lambda_{\tmn} {\der P}_{\tmn}(z)\der s)\notag\\
	 			&\quad+ \int_{0}^{t}\int_{0}^{\infty}\expo^{-\rho s}\big(u(X_{s-}+z)-u(X_{s-})\big)(\widetilde{N}^{\tmp}(dz\times \der s)-\lambda_{\tmp} {\der P}_{\tmp}(z)\der s)
			\end{align*} 		
	 		is a zero-mean local martingale. Taking a localizing sequence $\{\tau_{n}\}_{n\geq0}\subset\mathscr{S}$, the reader may verify, using the explicit form of $u$ given in \eqref{st1.1} , that  $|\widetilde{M}_{t\wedge\tau_{n}\wedge\tau^{\star}}|\leq Z$ for all $n\geq0$ and $t\geq0$, for some integrable positive random variable $Z$.  Hence  $\{\widetilde{M}_{t\wedge\tau_{n}\wedge\tau^{\star}}:t\geq0\}$ is a zero $\Pro^{x}$-martingale. Taking expected value and replacing $t$ by  $t\wedge\tau_{n}\wedge\tau^{\star}$ in \eqref{st3}, we obtain 
	 		\begin{align*}
	 			u(x)&=\E^{x}[\expo^{-\rho [t\wedge\tau_{n}\wedge\tau^{\star}]}u(X_{t\wedge\tau_{n}\wedge\tau^{\star}})]-\E^{x}\Bigg[\int_{0}^{t\wedge\tau_{n}\wedge\tau^{\star}}\expo^{-\rho s}\Gamma u(X_{s-})\der s\Bigg].
	 		\end{align*}
	 		On the other hand, observe that $\lim_{t\rightarrow\infty}\lim_{n\rightarrow\infty}[t\wedge\tau_{n}\wedge\tau^{\star}]=\tau^{\star}$ $\Pro^{x}$-a.s.. Hence,  by dominated convergence theorem,   
	 		\begin{align}\label{st4}
	 			u(x)&=\E^{x}[\expo^{-\rho \tau^{\star}}u(X_{ \tau^{\star}})]-\E^{x}\Bigg[\int_{0}^{\tau^{\star}}\expo^{-\rho s}\Gamma u(X_{s-})\der s\Bigg].
	 		\end{align}
	 		If $x\geq b^{*}$, then Item (i) holds immediately. Otherwise, if $x<b^{\star}$, notice that $\Gamma u(X_{s-})=0$ for $s\in[0,\tau^{\star}]$ according to \eqref{st5}. Moreover, by the definition of $\tau^{*}$, it follows that $X_{\tau^{*}}\geq b^{\star}$. Thus, from these  and \eqref{st4}, we conclude that $u(x)=\E^{x}\big[\expo^{-\tau^{\star}\rho}[X_{\tau^{\star}}-c]\big]$ for any $x\in\R$. On the other hand, using \eqref{st3} and the fact that $\expo^{-\rho t}u(X_{t})-\int_{0}^{t}\expo^{-\rho s}\Gamma u(X_{s-})\der s$ is non-negative for $s\in[0,\tau^{\star}]$, we obtain $\E^{x}[\widetilde{M}_{t}]\geq-u(x)$.  Consequently,  by applying Fatou's Lemma, it follows that $\widetilde{M}_{t}$ is a supermartingale with $\E[\widetilde{M}_{0}]=0$. Hence $\E^{x}[\widetilde{M}_{\tau}] \leq0$ for any $\tau\in\mathscr{S}$. By setting  $t=\tau$ in \eqref{st3}, with $\tau\in\mathscr{S}$,  and then taking  expectation, we obtain $u(x)\geq\E^{x}[\expo^{-\rho \tau}u(X_{\tau})]\geq\E^{x}\big[\expo^{-\rho\tau}[X_{\tau}-c]\big]$ as indicated in \eqref{EDu1}. Therefore, by combining Items (i) and (ii), we conclude that $u=U$ on $\R$ and the stopping time $\tau^{*}=\tau(x)$ as in \eqref{tau1} is optimal for the problem \eqref{st1}.
	 		\end{proof}
	 
	 \section{Comparative statics analysis}\label{comp1}
	 
	 In this section, we study the sensitivity of the parameters $\mu$, $\sigma$, $\lambda_{\tmn}$, and $\lambda_{\tmp}$ on the value function $v_{b^{\star}}$ and the optimal threshold $b^{\star}$ as given by \eqref{v1.1} and \eqref{b}, respectively. To this end, we first present some properties of the roots of \eqref{eqaux}, which depend on these parameters.
	 
	 The following lemma is stated without proof, since it can be  verified by applying implicit differentiation to \eqref{eqaux} with respect to $\mu$, $\sigma$, $\lambda_{\tmn}$, respectively, and $\lambda_{\tmp}$, along with the use of \eqref{cond3}.
	 \begin{lema}\label{lemaraices}
	 	If $r$ is a root of \eqref{eqaux}, then it satisfies the following properties.
	 	\begin{enumerate}
	 		\item[(1)] The mapping $\sigma\mapsto r$, with $\sigma>0$, is increasing if $r$ is a negative root, and decreasing if $r$ is a positive root.
	 		
	 		\item[(2)] The mapping $\mu\mapsto r$, with $\mu\in\R$, is decreasing.
	 		
	 		\item[(3)] The mapping $\lambda_{\tmp}\mapsto r$, with $\lambda_{\tmp}>0$, is decreasing if $r$ is a negative root. If $r $ is a positive root of \eqref{eqaux}, the mapping $\lambda_{\tmp}\mapsto r$ is decreasing  when $r<\tilde{x}_{0}$, and increasing when  $r>\tilde{x}_{0}$, where $\tilde{x}_{0}>0$ is the unique point where 
	 		\begin{equation*}
	 			\frac{\rho}{x}+\lambda_{\tmn}\sum_{i=1}^{m_{\tmn}}\frac{\omega^{\tmn}_{i}}{\beta^{\tmn}_{i}+x}=\frac{\sigma x}{2}+\mu.
	 		\end{equation*}
	 		 \item[(4)] The mapping $\lambda_{\tmn}\mapsto r$, with $\lambda_{\tmn}>0$, is increasing if $r$ is a positive root. If $r $ is a  negative root of \eqref{eqaux}, the mapping $\lambda_{\tmn}\mapsto r$ is increasing  when $r>\tilde{x}_{1}$, and decreasing when  $r<\tilde{x}_{1}$, where $\tilde{x}_{1}<0$ is the unique point where 
	 		 \begin{equation*}
	 		 	\frac{\rho}{x}-\lambda_{\tmp}\sum_{i=1}^{m_{\tmp}}\frac{\omega^{\tmp}_{i}}{\beta^{\tmp}_{i}-x}=\frac{\sigma x}{2}+\mu.
	 		 \end{equation*}
	 	\end{enumerate}
	 \end{lema}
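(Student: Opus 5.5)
The approach is implicit differentiation. Write $p(r)=p(r;\theta)$ for the left-hand side of \eqref{eqaux}, regarded also as a function of a parameter $\theta\in\{\sigma,\mu,\lambda_{\tmp},\lambda_{\tmn}\}$. At a root $r=r(\theta)$ with $\partial_r p(r)\neq 0$, the implicit function theorem gives
\begin{equation*}
\frac{\der r}{\der\theta}=-\frac{\partial_\theta p(r;\theta)}{\partial_r p(r;\theta)} .
\end{equation*}
So the proof reduces to two tasks: determine the sign of $\partial_r p$ at each root, then determine the sign of $\partial_\theta p$ for each of the four parameters.

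\emph{Step 1 (sign of $\partial_r p$; the main obstacle).} By \eqref{cond3}, $p(0)=-\rho<0$. The map $p$ is smooth away from the poles $r=\beta^{\tmp}_i$ and $r=-\beta^{\tmn}_i$. I would check its limits at the ends of each interval in \eqref{des}.
\begin{itemize}
\item On $(0,\beta^{\tmp}_1)$, $p$ goes from $-\rho$ to $+\infty$.
\item On each $(\beta^{\tmp}_k,\beta^{\tmp}_{k+1})$, $p$ goes from $-\infty$ to $+\infty$.
\item On $(\beta^{\tmp}_{m_{\tmp}},\infty)$, $p$ also goes from $-\infty$ to $+\infty$, because of the quadratic term.
\end{itemize}
Theorem 3.1 of \cite{CK2011} allows at most $m_{\tmp}+1$ positive roots, and there are $m_{\tmp}+1$ such intervals, so each interval contains exactly one root and it is simple. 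Since $p$ crosses from negative to positive there, $\partial_r p(r_i)>0$. For the negative side the argument is symmetric: on each interval $p$ goes from $+\infty$ at the left end to $-\infty$ (or to $-\rho$ at $0^-$). Hence $\partial_r p(r^{\tmn}_j)<0$. If simplicity of the roots were not fully covered by the cited count, I would instead argue via the convexity of $p$ on each interval, since $p''>0$ there.

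\emph{Step 2 (sign of $\partial_\theta p$).} The parameter derivatives are $\partial_\sigma p=\sigma r^2>0$ and $\partial_\mu p=r$. Combined with Step 1, these give (1) and (2) immediately. For example, for $\mu$ we get $\der r/\der\mu=-r/\partial_rp$, which is negative both for $r>0$ (where $\partial_rp>0$) and for $r<0$ (where $\partial_rp<0$).

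For $\lambda_{\tmp}$, using $\sum_i\omega^{\tmp}_i=1$,
\begin{equation*}
\partial_{\lambda_{\tmp}}p=\sum_i\frac{\omega^{\tmp}_i r}{\beta^{\tmp}_i-r}.
\end{equation*}
This is negative when $r<0$, which gives decreasing monotonicity for the negative roots. For a positive root I would use $p(r)=0$ to eliminate this sum:
\begin{equation*}
\lambda_{\tmp}\partial_{\lambda_{\tmp}}p=-\frac{\sigma^2r^2}{2}-\mu r+\rho+\lambda_{\tmn}\sum_i\frac{\omega^{\tmn}_i r}{\beta^{\tmn}_i+r}.
\end{equation*}
Dividing by $r>0$, the sign of $\partial_{\lambda_{\tmp}}p$ equals the sign of
\begin{equation*}
g(r)=\frac{\rho}{r}+\lambda_{\tmn}\sum_i\frac{\omega^{\tmn}_i}{\beta^{\tmn}_i+r}-\frac{\sigma^2 r}{2}-\mu .
\end{equation*}
The function $g$ is strictly decreasing on $(0,\infty)$, from $+\infty$ to $-\infty$, so it has a unique zero $\tilde x_0$. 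Hence $\der r/\der\lambda_{\tmp}<0$ for $r<\tilde x_0$ and $>0$ for $r>\tilde x_0$, which is (3) (with the statement's normalization of the $\sigma$-term).

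Item (4) is the mirror image. We have $\partial_{\lambda_{\tmn}}p=-\sum_i\omega^{\tmn}_i r/(\beta^{\tmn}_i+r)$, which is negative for $r>0$; this gives increasing monotonicity for the positive roots. For negative roots, dividing by $r<0$ flips the sign, and the analogous function on $(-\infty,0)$ is monotone with a unique zero $\tilde x_1$. This yields the stated dichotomy.
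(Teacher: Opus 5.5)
Your proposal is correct and follows the route the paper intends: the paper omits the proof, saying only that it follows from implicit differentiation of \eqref{eqaux} together with \eqref{cond3}. Your sign analysis of $\partial_r p$ at the roots and the elimination of the jump sums via $p(r)=0$ supply the omitted details, and you rightly obtain $\sigma^{2}x/2$ where the statement's defining equations for $\tilde{x}_0$ and $\tilde{x}_1$ have the typo $\sigma x/2$. The one thing to drop is the convexity fallback: $p''\to-\infty$ as $r\downarrow\beta^{\tmp}_k$ and as $r\uparrow-\beta^{\tmn}_k$, so $p''>0$ is guaranteed only on $(-\beta^{\tmn}_1,\beta^{\tmp}_1)$. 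Instead, get simplicity of the roots by clearing denominators, which gives a polynomial of degree $m_{\tmp}+m_{\tmn}+2$ that already has that many distinct real roots by your sign-change count.
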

	 \begin{rem}\label{rem1}
	 	Since \eqref{eqaux} has $m_{\tmp}+1$ positive roots, it immediately follows from \eqref{des} and  Item (3) of Lemma \ref{lemaraices} that there exists a $k_{1}\in\{0,\dots,m_{\tmp}-1\}$ (independent of $\lambda_{\tmp}$) such  that  the mapping $\lambda_{\tmp}\mapsto r_{i}$ is decreasing on $(0,\infty)$ if $i\in\{0,\dots,k_{1}\}$, and is increasing on $(0,\infty)$ if $i\in\{k_{1}+1,\dots,m_{\tmp}\}$. Similarly, by \eqref{des} and Item (4) of Lemma \ref{lemaraices}, for the $m_{\tmn}+1$ negative roots, there exists a $k_{2}\in\{0,\dots,m_{\tmn}-1\}$ (independent of $\lambda_{\tmn}$) such the mapping $\lambda_{\tmn}\mapsto r^{\tmn}_{i}$ is increasing on $(0,\infty)$ if $i\in\{0,\dots,k_{2}\}$, and is decreasing on $(0,\infty)$ if $i\in\{k_{2}+1,\dots,m_{\tmn}\}$.
 	 \end{rem}
	 Taking derivatives of \eqref{b} with respect to $\mu$, $\sigma$, and $\lambda_{\tmn}$, respectively, and applying Lemma \ref{lemaraices} to these derivatives, the following corollary follows immediately; the proof is therefore omitted.
	 \begin{corol}
	 	Let $b^{\star}$ be as in \eqref{b}. Then, the following hold.
	 	\begin{enumerate}\label{b1}
	 		\item[(1)] The mappings $\sigma\mapsto b^{\star}$, with $\sigma>0$, and $\mu\mapsto b^{\star}$, with $\mu\in\R$, are increasing.	 		
	 		
	 		\item[(2)]  The mapping $\lambda_{\tmn}\mapsto b^{\star}$, with $\lambda_{\tmn}>0$, is decreasing.
	 		
	 	\end{enumerate}
	 \end{corol}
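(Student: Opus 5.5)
The proof will be by direct differentiation of the explicit formula \eqref{b},
\begin{equation*}
b^{\star}=c+\sum_{i=1}^{m_{\tmp}+1}\frac{1}{r_{i-1}}-\sum_{i=2}^{m_{\tmp}+1}\frac{1}{\beta^{\tmp}_{i-1}}.
\end{equation*}
The constants $c$ and $\beta^{\tmp}_{i}$ do not depend on $\mu$, $\sigma$ or $\lambda_{\tmn}$. Hence for any parameter $\theta\in\{\mu,\sigma,\lambda_{\tmn}\}$ we get
\begin{equation*}
\frac{\partial b^{\star}}{\partial\theta}=-\sum_{i=1}^{m_{\tmp}+1}\frac{1}{r_{i-1}^{2}}\,\frac{\partial r_{i-1}}{\partial\theta}.
\end{equation*}
So the sign of $\partial b^{\star}/\partial\theta$ is the opposite of the common sign of the derivatives of the positive roots $r_0,\dots,r_{m_{\tmp}}$, provided these derivatives all share a sign.

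First I will justify that each positive root $r_{i}$ depends smoothly on the parameter. By \eqref{des}, each $r_i$ is a simple root of $p$ lying in a fixed interval $(\beta^{\tmp}_{i},\beta^{\tmp}_{i+1})$, or $(0,\beta^{\tmp}_1)$, or $(\beta^{\tmp}_{m_{\tmp}},\infty)$. On that interval $p$ is smooth and jointly smooth in the parameters. To get simplicity I will check that $p'(r_i)\neq0$. For instance, on each interval between consecutive poles $p$ is convex in $r$, because the terms $\lambda_{\tmp}\omega\beta/(\beta-r)$ and $\lambda_{\tmn}\omega\beta/(r+\beta)$ are convex there for $r>0$ and the quadratic part is convex. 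Together with the interlacing in \eqref{des}, this shows there is exactly one crossing per interval, and the crossing is transversal. The implicit function theorem then gives differentiability.

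Next I will invoke Lemma \ref{lemaraices}. By item (1), every positive root is decreasing in $\sigma$, so $\partial r_{i-1}/\partial\sigma\le0$ and hence $\partial b^{\star}/\partial\sigma\ge0$. By item (2), every root is decreasing in $\mu$, so likewise $\partial b^{\star}/\partial\mu\ge0$. By item (4), every positive root is increasing in $\lambda_{\tmn}$, so $\partial b^{\star}/\partial\lambda_{\tmn}\le0$. To obtain strict monotonicity I will write the implicit derivative $\partial r/\partial\theta=-(\partial p/\partial\theta)/p'(r)$ and note that the numerator does not vanish at positive roots:
\begin{itemize}
\item $\partial_{\sigma}p=\sigma r^{2}>0$;
\item $\partial_{\mu}p=r>0$;
\item $\partial_{\lambda_{\tmn}}p=-1+\sum_i\omega^{\tmn}_i\beta^{\tmn}_i/(r+\beta^{\tmn}_i)<0$ for $r>0$.
\end{itemize}
Combined with $p'(r_i)\neq0$, each derivative is strictly signed.

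The main obstacle is this sign bookkeeping for $p'(r_i)$. Its sign alternates over the intervals in \eqref{des}, and this alternation must be consistent with Lemma \ref{lemaraices} asserting a uniform direction for all positive roots. Since the lemma is stated earlier, I will take it as given and use the implicit-function computation only for strictness and differentiability.
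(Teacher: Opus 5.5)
Your main line is the paper's own one-line argument, and that deduction is correct: differentiate \eqref{b} to get $\partial_\theta b^{\star}=-\sum_{i}r_{i-1}^{-2}\,\partial_\theta r_{i-1}$, then use Lemma~\ref{lemaraices} to give every $\partial_\theta r_{i-1}$ the same sign. The supporting claims you add, however, contain two errors.

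First, $p$ is \emph{not} convex between consecutive poles. For $r>\beta^{\tmp}_k$ the term $\beta^{\tmp}_k/(\beta^{\tmp}_k-r)$ has second derivative $2\beta^{\tmp}_k/(\beta^{\tmp}_k-r)^{3}<0$. Also, a convex function cannot tend to $-\infty$ at a finite endpoint, yet $p$ does so as $r\downarrow\beta^{\tmp}_k$. Convexity holds only on $(0,\beta^{\tmp}_1)$. Simplicity of the roots should come from degree counting instead. The function $Q(r):=p(r)\prod_{i}(\beta^{\tmp}_i-r)\prod_{j}(r+\beta^{\tmn}_j)$ is a polynomial of degree $m_{\tmp}+m_{\tmn}+2$. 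The sign changes of $p$ across each interval in \eqref{des} (using $p(0)=-\rho$ and the limits at the poles and at $\pm\infty$) give $Q$ a root in each of these $m_{\tmp}+m_{\tmn}+2$ intervals. So each root is the only one in its interval and is simple, which gives $p'(r_i)\neq0$.

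Second, the sign of $p'$ does \emph{not} alternate over the positive roots. We have $p(0)=-\rho$, $p\to+\infty$ as $r\uparrow\beta^{\tmp}_k$, $p\to-\infty$ as $r\downarrow\beta^{\tmp}_k$, and $p\to+\infty$ as $r\to\infty$. So on every positive interval $p$ passes from negative to positive values, and $p'(r_i)>0$ for all $i\in\{0,\dots,m_{\tmp}\}$.

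As you stated it, the alternation claim was not a harmless bookkeeping worry. Your own (correct) computations give $\partial_\sigma p=\sigma r^{2}>0$, $\partial_\mu p=r>0$ and $\partial_{\lambda_{\tmn}}p=-\sum_i\omega^{\tmn}_i r/(r+\beta^{\tmn}_i)<0$ at positive roots. An alternating $p'(r_i)$ would then force $\partial_\theta r_i$ to alternate in sign, contradicting Lemma~\ref{lemaraices}. ``Taking the lemma as given'' therefore could not reconcile it; it would have made your proposal internally inconsistent.

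Once $p'(r_i)>0$ is established correctly, your implicit-function formula $\partial_\theta r_i=-\partial_\theta p(r_i)/p'(r_i)$ gives directly $\partial_\sigma r_i<0$, $\partial_\mu r_i<0$ and $\partial_{\lambda_{\tmn}}r_i>0$. Hence $\partial_\sigma b^{\star}>0$, $\partial_\mu b^{\star}>0$ and $\partial_{\lambda_{\tmn}}b^{\star}<0$ strictly, with no appeal to Lemma~\ref{lemaraices} at all. This corrected version is somewhat stronger than the paper's route. It supplies the differentiability of the roots in the parameters, which the paper takes for granted. It also proves the positive-root parts of items (1), (2) and (4) of Lemma~\ref{lemaraices}, which the paper states without proof.
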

	 \begin{rem}
	 	Considering Remark \ref{rem1}, observe that \eqref{b} can be rewritten as $b^{*}=c-\sum_{i=2}^{m_{\tmp}+1}\frac{1}{\beta_{i-1}^{\tmp}}+\sum_{i=1}^{k_1+1}\frac{1}{r_{i-1}}+\sum_{i=k_{1}+2}^{m_{\tmp}+1}\frac{1}{r_{i-1}}$, where the last two sums are increasing and decreasing, respectively. The conjecture is that the mapping $\lambda_{\tmp}\mapsto b^{*}$, with $\lambda_{\tmp}>0$, is increasing as numerical examples support this claim. However, due to the dependency of $k_{1}$ on the parameters  $\mu$, $\sigma$,  $\lambda_{n}$, $\beta^{u}_{i}$, and $\omega^{u}_{i}$ with $u\in\{\tmn,\tmp\}$, it is not straightforward to rigorously verify this conjecture.
	 \end{rem}
	 
	The following result establishes some monotonicity properties of the  value  function $V$ with respect to  the parameters $\mu$, $\sigma$, $\lambda_{\tmn}$, and $\lambda_{\tmp}$.
	 \begin{prop}
	 	The value function $V$ given in \eqref{fV} is increasing with respect to $\mu$, $\sigma$ and $\lambda_{\tmn}$, and is decreasing with respect to $\lambda_{\tmp}$.
	 \end{prop}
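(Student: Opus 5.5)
The plan is to reduce everything to the one-dimensional function $u=\alpha\frac{\partial}{\partial x}V+\frac{\partial}{\partial y}V$ of Section~\ref{stop1}, and then study how $u$ depends on the parameters.

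\emph{Reduction to $u$.} By Theorem~\ref{teo1}, $V=v_{b^{\star}}$ is $\hol^{2,1}$ and satisfies $V(x,0)=0$. By \eqref{st1.1}, $\alpha\frac{\partial}{\partial x}V+\frac{\partial}{\partial y}V=u(x)$ depends only on $x$. Along the characteristic $s\mapsto(x-\alpha(y-s),s)$, $s\in[0,y]$, the derivative of $V$ is therefore $u(x-\alpha y+\alpha s)$. Integrating from $s=0$ gives
\begin{equation*}
V(x,y)=\int_{0}^{y}u(x-\alpha y+\alpha s)\,\der s .
\end{equation*}
Hence any monotonicity of $u$ pointwise in a parameter, for fixed $x$, transfers directly to $V$ for every $(x,y)$. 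By Proposition~\ref{st2}, $u=U$, the value of the optimal stopping problem \eqref{st1}. Moreover $u(x)=x-c$ for $x\ge b^{\star}$, $u(x)=\sum_{j}K_{j-1}\expo^{r_{j-1}[x-b^{\star}]}$ for $x<b^{\star}$, and $u\ge x-c$ everywhere.

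\emph{Parameters $\mu$ and $\sigma$.} For $\mu$ I would use a pathwise argument. Since $X^{0}_t$ in \eqref{eq1} is pathwise increasing in $\mu$, the reward $\expo^{-\rho\tau}[X_\tau-c]$ is larger for every stopping time. Taking the supremum in \eqref{st1} shows $U$, hence $u$ and $V$, is increasing in $\mu$.

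For $\sigma<\sigma'$ I would compare generators. Write $\Gamma_{\sigma}$ for $\Gamma$ with volatility $\sigma$ and $u_{\sigma}$ for the corresponding $u$. Then $\Gamma_{\sigma'}u_{\sigma}=\Gamma_{\sigma}u_{\sigma}+\tfrac12(\sigma'^2-\sigma^2)u_{\sigma}''$, and $u_\sigma''\ge0$ because $u$ is convex by Proposition~\ref{st2}. On $\{x<b^{\star}_\sigma\}$ we have $\Gamma_\sigma u_\sigma=0$, so $\Gamma_{\sigma'}u_\sigma\ge0$ there. Next, apply the Meyer--It\^o expansion \eqref{st3} to $u_\sigma$ along the $\sigma'$-process $X'$, up to $\tau'=\inf\{t: X'_t\ge b^{\star}_\sigma\}$, with the same localisation as in the proof of Proposition~\ref{st2}. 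This gives
\begin{equation*}
u_\sigma(x)\le\E^{x}\big[\expo^{-\rho\tau'}u_\sigma(X'_{\tau'})\big]=\E^{x}\big[\expo^{-\rho\tau'}(X'_{\tau'}-c)\big]\le U_{\sigma'}(x).
\end{equation*}
The middle equality holds because $u_\sigma(z)=z-c$ for $z\ge b^{\star}_\sigma$. This yields $V$ increasing in $\sigma$, consistent with Corollary~\ref{b1}(1).

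\emph{Jump intensities: the main obstacle.} The same generator comparison gives
\begin{equation*}
\Gamma_{\lambda'_{\tmn}}u-\Gamma_{\lambda_{\tmn}}u=(\lambda'_{\tmn}-\lambda_{\tmn})\int_0^\infty[u(x-z)-u(x)]\,\der P_{\tmn}(z),
\end{equation*}
and the analogous identity for $\lambda_{\tmp}$ with $u(x+z)$. Since $u$ is increasing, the sign of the $\lambda_{\tmn}$ term is $\le0$. A naive comparison therefore points in the direction opposite to the statement. So for $\lambda_{\tmn}$ and $\lambda_{\tmp}$ I would not rely on comparison. Instead I would work with the explicit formula: differentiate $u(x)=\sum_jK_{j-1}\expo^{r_{j-1}[x-b^{\star}]}$ with respect to the intensity. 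The ingredients are:
\begin{itemize}
\item the dependence of the positive roots $r_j$ given by Lemma~\ref{lemaraices}(3)--(4) and Remark~\ref{rem1};
\item the dependence of $b^{\star}$ given by Corollary~\ref{b1}(2);
\item the constraints \eqref{cond1K}--\eqref{cond3K}, used to express $\partial K_{j-1}/\partial\lambda$ through the derivatives of the $r_j$, by differentiating the linear system defining the $K_{j-1}$.
\end{itemize}
The step I expect to be hardest is controlling the sign of this sum. The $r_j$ do not all move in the same direction (the index $k_1$ of Remark~\ref{rem1}), and the $K_{j-1}$ change with them. I would first try to write $\partial_\lambda u$ for $x<b^{\star}$ as a combination that vanishes at $x=b^{\star}$ (using $C^1$-fit and \eqref{cond1K}--\eqref{cond2K}) and has a definite sign via the identities of Lemma~\ref{ru}. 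Along the way I would check this sign carefully against the comparison identity above, since the two must agree.
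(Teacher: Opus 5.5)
Your reduction $V(x,y)=\int_0^y u(x-\alpha y+\alpha s)\,\der s$, which follows from \eqref{st1.1} and $V(\cdot,0)=0$, is correct. So are your arguments for $\mu$ (pathwise comparison in \eqref{st1}) and for $\sigma$ (a subsolution argument for $u_\sigma$ along the $\sigma'$-dynamics up to the hitting time of $b^{\star}_\sigma$). These are self-contained, whereas the paper only refers to \cite{FK2021} for these two parameters.

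The genuine gap is the jump-intensity part, which you leave as an unexecuted differentiation programme. It cannot be completed. The comparison you call naive is conclusive, not heuristic, and it proves the opposite monotonicity. So the statement is wrong for $\lambda_{\tmn}$ and $\lambda_{\tmp}$.

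Take $\lambda_{\tmp}<\lambda'_{\tmp}$. On $\{x<b^{\star}_{\lambda_{\tmp}}\}$ we have $\Gamma_{\lambda_{\tmp}}u_{\lambda_{\tmp}}=0$. Your identity then gives $\Gamma_{\lambda'_{\tmp}}u_{\lambda_{\tmp}}=(\lambda'_{\tmp}-\lambda_{\tmp})\int_0^\infty[u_{\lambda_{\tmp}}(x+z)-u_{\lambda_{\tmp}}(x)]\,\der P_{\tmp}(z)>0$ there, because $u$ is strictly increasing on $\R$. Now run exactly your $\sigma$-argument: apply It\^o up to the first time the $\lambda'_{\tmp}$-price reaches $b^{\star}_{\lambda_{\tmp}}$, where $u_{\lambda_{\tmp}}(z)=z-c$. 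This yields $u_{\lambda_{\tmp}}(x)<U_{\lambda'_{\tmp}}(x)=u_{\lambda'_{\tmp}}(x)$ for $x<b^{\star}_{\lambda_{\tmp}}$.

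Symmetrically, take $\lambda_{\tmn}<\lambda'_{\tmn}$. Then $\Gamma_{\lambda_{\tmn}}u_{\lambda'_{\tmn}}=(\lambda'_{\tmn}-\lambda_{\tmn})\int_0^\infty[u_{\lambda'_{\tmn}}(x)-u_{\lambda'_{\tmn}}(x-z)]\,\der P_{\tmn}(z)>0$ below $b^{\star}_{\lambda'_{\tmn}}$. The same argument gives $u_{\lambda'_{\tmn}}<U_{\lambda_{\tmn}}=u_{\lambda_{\tmn}}$ there.

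Through your integral representation, $V$ is therefore increasing in $\lambda_{\tmp}$ and decreasing in $\lambda_{\tmn}$. The inequality is strict at every $(x,y)$ with $y>0$ and $x-\alpha y$ below the relevant threshold. This is also what a coupling predicts: adding independent upward jumps raises every price path. No sign analysis of $\sum_j K_{j-1}\expo^{r_{j-1}[x-b^{\star}]}$ can deliver the claimed directions, as the consistency check you planned would have revealed.

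The paper's proof runs this same comparison at the level of $V$. Its displays \eqref{lambdapmon} and \eqref{lambdanmon} assert $\mathcal{D}_{\lambda_{\tmp,1}}V_{\lambda_{\tmp,2}}\le 0$ and $\mathcal{D}_{\lambda_{\tmn,2}}V_{\lambda_{\tmn,1}}\le 0$, with signs that agree with yours. However, feeding these supersolution inequalities into the first half of Proposition \ref{TV} yields $V_{\lambda_{\tmp,2}}\ge V_{\lambda_{\tmp,1}}$ and $V_{\lambda_{\tmn,1}}\ge V_{\lambda_{\tmn,2}}$. These are the reverse of the inequalities the paper writes. What can actually be proved is that $V$ is increasing in $\mu$, $\sigma$ and $\lambda_{\tmp}$, and decreasing in $\lambda_{\tmn}$.
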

	 \begin{proof}
	 	To verify that $V$ is increasing with respect to  $\mu$  and $\sigma$, respectively, one can apply arguments  similar  to those  in the proof of Proposition 5.2 of \cite{FK2021}. Therefore, we focus on showing the monotonicity of $V$ with respect to $\lambda_{\tmn}$ and $\lambda_{\tmp}$, respectively.
	 	
	 	Let us denote the value function $V$ and  the operator $\mathcal{D}$  given in \eqref{ENPV} and \eqref{generador}   by $ \mathcal{D}_{\lambda_{\tmp},\lambda_{\tmn}}$ and   $V_{\lambda_{\tmp},\lambda_{\tmn}}$, respectively, to explicitly show their dependence on the parameters $\lambda_{\tmp}$ and $\lambda_{\tmn}$.  Notice that for any $\lambda_{\tmn}>0$ and $\lambda_{\tmp}>0$,    \eqref{eqHJB} holds, and $V_{\lambda_{\tmp},\lambda_{\tmn}}(x,0)=0$ with $x\in\R$. Meanwhile,  if  $\lambda_{u,1}<\lambda_{u,2}$, with $u\in\left\{\tmn,\tmp\right\}$, we have that 
	 	\begin{align}\label{lambdapmon}
	 		&\mathcal{D}_{\sigma,\lambda_{\tmp,1},\lambda_{\tmn}}V_{\sigma,\lambda_{\tmp,2},\lambda_{\tmn}}(x,y)\notag\\
	 		&\quad=\mathcal{D}_{\sigma,\lambda_{\tmp,2},\lambda_{\tmn}}V_{\sigma,\lambda_{\tmp,2},\lambda_{\tmn}}(x,y)\notag\\
	 		&\qquad+\left(\lambda_{\tmp,1}-\lambda_{\tmp,2}\right)\int_{0}^{\infty}[V_{\sigma,\lambda_{\tmp,2},\lambda_{\tmn}}(x+z,y)-V_{\sigma,\lambda_{\tmp,2},\lambda_{\tmn}}(x,y)]\der P_{\tmn}(z)\notag\\
	 		&\quad\leq \left(\lambda_{\tmp,1}-\lambda_{\tmp,2}\right)\int_{0}^{\infty}[V_{\sigma,\lambda_{\tmp,2},\lambda_{\tmn}}(x+z,y)-V_{\sigma,\lambda_{\tmp,2},\lambda_{\tmn}}(x,y)]\der P_{\tmn}(z)\leq 0,
	 	\end{align}
	 	due to $\mathcal{D}_{\sigma,\lambda_{\tmp},\lambda_{\tmn}}V_{\sigma,\lambda_{\tmp},\lambda_{\tmn}}(x,y)\leq 0$  and the increasing property of the mapping  $x\mapsto V_{\sigma,\lambda_{\tmn},\lambda_{\tmn}}(x,y)$, for each $y\geq0$ fixed.  Similarly,  
	 	\begin{align}\label{lambdanmon}
	 		&\mathcal{D}_{\sigma,\lambda_{\tmp},\lambda_{\tmn,2}}V_{\sigma,\lambda_{\tmp},\lambda_{\tmn,1}}(x,y)\notag\\
	 		&\quad=\mathcal{D}_{\sigma,\lambda_{\tmp},\lambda_{\tmn,1}}V_{\sigma,\lambda_{\tmp},\lambda_{\tmn,1}}(x,y)\notag\\
	 		&\qquad+\left(\lambda_{\tmn,2}-\lambda_{\tmn,1}\right)\int_{0}^{\infty}[V_{\sigma,\lambda_{\tmp},\lambda_{\tmn,1}}(x-z,y)-V_{\sigma,\lambda_{\tmp},\lambda_{\tmn,1}}(x,y)]\der P_{\tmn}(z)\notag\\
	 		&\quad\leq \left(\lambda_{\tmn,2}-\lambda_{\tmn,1}\right)\int_{0}^{\infty}[V_{\sigma,\lambda_{\tmp},\lambda_{\tmn,1}}(x-z,y)-V_{\sigma,\lambda_{\tmp},\lambda_{\tmn,1}}(x,y)]\der P_{\tmn}(z)\leq 0.
	 	\end{align}
	 	By arguments similar to those in  the first part of Proposition \ref{TV}, and using  \eqref{lambdapmon} and \eqref{lambdanmon}, together with $-\alpha \frac{\partial}{\partial x}V_{\lambda_{\tmp},\lambda_{\tmn}}(x,y)-\frac{\partial}{\partial y}V_{\lambda_{\tmp},\lambda_{\tmn}}(x,y)+x-c\leq 0$ for any $\lambda_{\tmn}>0$ and $\lambda_{\tmp}>0$, we conclude that  $V_{\lambda_{\tmp,2},\lambda_{\tmn}}\leq V_{\lambda_{\tmp,1},\lambda_{\tmn}}$  and $V_{\lambda_{\tmp},\lambda_{\tmn,1}}\leq V_{\lambda_{\tmp},\lambda_{\tmn,2}}$.\qed
	 \end{proof}
	 
	 \begin{rem}
	 	Letting $\alpha \downarrow 0$ in \eqref{v1.1},  the region for selling a proportional amount of the commodity disappears. Moreover,   by  applying L'H\^opital's rule,  we obtain
	 	\begin{equation*}
	 		\lim\limits_{\alpha\to 0}V(x,y)=\begin{cases}
	 			y\sum_{j=1}^{m_{\tmp}+1}K_{j-1}\expo^{r_{j-1}[x-b^{\star}]} & \text{ si } x<b^{\star},\\
	 			[x-c]y & \text{ si } x\geq b^{\star},
	 		\end{cases}
	 	\end{equation*}
	 	which belongs to  $\hol^{1,1}(\R,\R^{+})$. Notice that $\parcial{^{2}V}{x^{2}}$ has a discontinuity at $x=b^{\star}$. Furthermore, letting $\alpha\uparrow\infty$  in \eqref{v1.1} yields that $V\rightarrow 0$.
	 	
	 \end{rem}
	 
	 \section{Conclusions and future work} \label{S8}

	 In this work, we addressed an optimal extraction problem where an agent operates in a spot market, and their actions have an additive, proportional, and negative effect on the commodity price. The commodity price dynamics, in the absence of agent interventions, are modelled by a drifted Brownian motion with jumps, capturing both continuous fluctuations and sudden market shocks.
	 
	 Our study showed that the optimal extraction strategy is of barrier-type: the agent’s decision to sell is based on comparing the current commodity price to a constant critical threshold $b^{\star}$. When the price falls below this level, it is optimal to refrain from selling any commodity. Conversely, by linearly comparing the current price and the inventory level, it may be optimal to sell either all available commodities or a portion of them, resulting in a decrease in the commodity price.
	 
	 The barrier strategy was derived from explicit solutions to the HJB equation associated with the value function, and its validity was justified by imposing smooth-fit conditions at the boundary. Notably, the threshold $b^{\star}$ is independent of both the current inventory level and the current commodity price; it depends solely on the initial parameters governing the price dynamics.
	 
	 On the other hand, in some commodity markets, empirical evidence suggests that prices exhibit mean reversion behavior, driven, for example, by supply and demand trends. Optimal extraction under mean-reverting price dynamics remains an open problem when jumps are included in the price without any intervention from the agent. Although this topic is beyond the scope of this article, it represents an important direction for future research.
	 	
	 Analytically, introducing mean reversion with a jump component significantly complicates the structure of the associated HJB equation. In particular, the resulting equation becomes an integro-differential equation, requiring more sophisticated analytical techniques   as viscosity solutions. Furthermore, establishing smooth-fit conditions and a verification theorem in this context is more intricate due to the non-linearity introduced by mean reversion.
	 	
	 	
	 	Based on the results obtained in \cite{FK2021} for the case where price dynamics are driven by an Ornstein-Uhlenbeck process, we intuitively expect that if the commodity price follows a mean-reverting process with a jump component, the structure of the optimal strategy will differ from the barrier-type solution found in our results. Specifically, the optimal strategy will be triggered by a non-constant critical level (curve) that should depend  on  the remaining inventory. The agent can no longer base their extraction decision solely on whether the current price exceeds a fixed threshold. Instead, the optimal strategy depends jointly on both the current price and the current reservoir level. When the price is high relative to its long-term mean, the optimal strategy might be to sell, anticipating a downward reversion. Conversely, if the price is low, the agent might prefer to postpone selling, expecting prices to recover toward the mean.

	\appendix
	
	\section{Appendix}
	
	\subsection{Proof of Lemma  \ref{Exp1}}\label{A}
	\begin{proof}[Proof of Lemma \ref{Exp1}]
		Let   $Y$ and $X$ be  as in \eqref{e1} and  \eqref{e2}, respectively, with   $\xi\in\mathcal{A}(y)$. Applying integration by parts and  It\^o-Meyer-Tanaka formula on $\{\expo^{-\rho t}f(X_{t},Y_{t}): t\geq0\}$, we get
		\begin{align}\label{ETV1}
			\expo^{-\rho t}f\left(X_{t},Y_{t}\right)-f(x,y)&=\int_{0}^{t}\expo^{-\rho s}(\mathcal{L}-\rho)f(X_s,Y_s)\der s
			+M_{t}\notag\\
			&\quad +\int_{0}^{t}\expo^{-\rho s}\left(-\alpha \frac{\partial}{\partial x}f(X_s,Y_s)-f_{y}(X_s,Y_s)\right)\der \xi_s^{c} \notag\\
			&\quad+\sum_{0\leq s\leq t}\expo^{-\rho s}(f(X_s,Y_s)-f(X_{s-},Y_{s-})),
		\end{align}
		where $\mathcal{L}=\frac{\sigma^{2}}{2}\parcial{^{2}}{x^{2}}+\mu\parcial{}{x}$ and $M_{t}\eqdef\int_{0}^{t}\sigma \expo^{-\rho s}\frac{\partial}{\partial x}f(X_{s})\der W_s$. Meanwhile, taking into account the random   Poisson measure  $\widetilde{N}^{u}$ on the space given in \eqref{e6.0}, notice that  
		\begin{align} \label{ETV2}
			&\sum_{0\leq s\leq t}\expo^{-\rho s}(f(X_s,Y_s)-f(X_{s-},Y_{s-})) \notag \\	
			&\quad=\sum_{0\leq s\leq t}\expo^{-\rho s}(f(X_{s-}-\Delta S^{\tmn}_{s}+\Delta S^{\tmp}_{s}-\alpha \Delta \xi_s,Y_{s-}-\Delta\xi_s)-f(X_{s-},Y_{s-}))\notag \\
			&\quad= \sum_{0\leq s\leq t}\expo^{-\rho s}(f(X_{s-}-\Delta S^{\tmn}_{s}+\Delta S^{\tmp}_{s}-\alpha \Delta \xi_s,Y_{s-}-\Delta\xi_s)- f(X_{s-}-\Delta S^{\tmn}_{s}+\Delta S^{\tmp}_{s},Y_{s-}))\notag\\
			&\qquad+ J_{t}^{\tmn}[X,Y] + J_{t}^{\tmp}[X,Y] +  \int_{0}^{t}\int_{0}^{\infty}\expo^{-\rho s}\big(f(X_{s-}-z,Y_{s-})-f(X_{s-},Y_{s-})\big)\lambda_{\tmn} {\der P}_{\tmn}(z)\der s \notag\\
			& \qquad+ \int_{0}^{t}\int_{0}^{\infty}\expo^{-\rho s}\left(f(X_{s-}+z,Y_{s-})-f(X_{s-},Y_{s-})\right)\lambda_{\tmp} {\der P}_{\tmp}(z)\der s,
		\end{align}
		with 
		\begin{align*}
			J_{t}^{\tmn}[X,Y] &\eqdef \int_{0}^{t}\int_{0}^{\infty}\expo^{-\rho s}\big(f(X_{s-}-z,Y_{s-})-f(X_{s-},Y_{s-})\big)(\widetilde{N}^{\tmn}(dz\times \der s)-\lambda_{\tmn} {\der P}_{\tmn}(z)\der s),\\
			J_{t}^{\tmp}[X,Y] &\eqdef\int_{0}^{t}\int_{0}^{\infty}\expo^{-\rho s}\big(f(X_{s-}+z,Y_{s-})-f(X_{s-},Y_{s-})\big)(\widetilde{N}^{\tmp}(dz\times \der s)-\lambda_{\tmp} {\der P}_{\tmp}(z)\der s).
		\end{align*}
		On the other hand, observe that
		\begin{align}
			\label{ETV3}
			&f(X_{s-}-\Delta S^{\tmn}_{s}+\Delta S^{\tmp}_{s}-\alpha \Delta \xi_s,Y_{s-}-\Delta\xi_s)-f(X_{s-}-\Delta S^{\tmn}_{s}+\Delta S^{\tmp}_{s},Y_{s-})\notag\\
			&\quad=\int_{0}^{\Delta \xi_s}\bigg(-\alpha \frac{\partial}{\partial x}f(X_{s-}-\Delta S^{\tmn}_{s}+\Delta S^{\tmp}_{s}-\alpha u,Y_{s-}-u)\notag\\
			&\quad\qquad\qquad\qquad\qquad-f_{y}(X_{s-}-\Delta S^{\tmn}_{s}+\Delta S^{\tmp}_{s}-\alpha u,Y_{s-}-u)\bigg)\der u,
		\end{align}
		Applying\eqref{ETV2}-\eqref{ETV3} in \eqref{ETV1}, we obtain
		\begin{align}
			\label{ETV10}
			&\expo^{-\rho t}f\left(X_{t},Y_{t}\right)-f(x,y) \notag\\
			&=\int_{0}^{t}\expo^{-\rho s}\mathcal{D}f(X_s,Y_s)\der s +{\int_{0}^{t}\sigma \expo^{-\rho s}\frac{\partial}{\partial x}f(X_{s})\der W_s}+ J_{t}^{\tmn}[X,Y]+J_{t}^{\tmp}[X,Y]\notag\\
			&\quad+\int_{0}^{t}\expo^{-\rho s}\left(-\alpha \frac{\partial}{\partial x}f(X_{s},Y_s)-f_{y}(X_{s},Y_s)\right)\der \xi_s^{c} \notag\\
			&\quad+\sum_{0\leq s\leq t}\expo^{-\rho s}   \int_{0}^{\Delta \xi_s}\bigg(-\alpha \frac{\partial}{\partial x}f(X_{s-}-\Delta S^{\tmn}_{s}+\Delta S^{\tmp}_{s}-\alpha u,Y_{s-}-u) \notag \\
			&\quad- f_{y}(X_{s-}-\Delta S^{\tmn}_{s}+\Delta S^{\tmp}_{s}-\alpha u,Y_{s-}-u)\bigg)\der u. 
		\end{align}
		Therefore, taking expected value in \eqref{ETV10} and considering that $M_{t}$, $J_{ t}^{\tmn}[X,Y] $ and $J_{ t}^{\tmp}[X,Y] $ are zero-mean-square-integrable martingales (since $f$ fulfils \eqref{l1}), we obtain   \eqref{ETV10.1}.\qed
	\end{proof}
	
	\subsection{Proof of Proposition \ref{pro1} and Lemma \ref{ru}}\label{B}
	
	In order to prove Proposition \ref{pro1} and Lemma \ref{ru}, we introduce some results and notation essential for these proofs. Take
	\begin{align}\label{co1}
		\begin{split}
			&\bar{\eta}_{j}\eqdef  \prod_{k=2}^{j}[r_{j-1}-\beta^{ {\tmp}}_{k-1}], \quad\underbar{$\eta$}^{n}_{j}\eqdef\prod_{k=j+1}^{n+1}[\beta^{ {\tmp}}_{k-1}-r_{j-1}],\\
			&\bar{\gamma}_{j}\eqdef  \prod_{k=1}^{j-1}[r_{j-1}-r_{k-1}], \quad\underbar{$\gamma$}^{n}_{j}\eqdef\prod_{k=j+1}^{n+1}[r_{k-1}-r_{j-1}]\\
			&\underbar{$\nu$}_{i}\eqdef\prod_{k= {1}}^{i-1}[\beta^{\tmp}_{i-1}-r_{k-1}],\quad \bar{\nu}^{n}_{i}\eqdef\prod_{k=i}^{n+1}[r_{k-1}-\beta^{\tmp}_{i-1}],\\ 
			&\underbar{$\theta$}_{i}\eqdef\prod_{k=2}^{i-1}[\beta^{\tmp}_{i-1}-\beta^{\tmp}_{k-1}],\quad \bar{\theta}^{n}_{i}\eqdef\prod_{k=i+1}^{n+1}[\beta^{\tmp}_{k-1}-\beta^{\tmp}_{i-1}].
		\end{split}
	\end{align} 
	We set $\bar{\eta}_{j}$ and $\bar{\gamma}_{j}$ to be one  if $j=1$,  {and $\underbar{$\eta$}^{n}_{j}$ and $\underbar{$\gamma$}^{n}_{j}$ to be one if $n=1$ and $j=2$}.  Additionally, we  take  $\underbar{$\theta$}_{i}$ identically equal to one if $i=2$,  {and   $\bar{\theta}^{n}_{i}$ to be one if $n=1$ and $i=2$}. Due to \eqref{des}, $\underbar{$\nu$}_{i}$, $\bar{\nu}^{n}_{i}$, $\underbar{$\theta$}_{i}$, and $\bar{\theta}^{n}_{i}$ are positive. The proofs of the following complementary  results can be  found in Appendix \ref{C}
	\begin{lema}\label{p2A2}
		For each $j\in\{1,\dots,n+1\}$ fixed,
		\begin{equation}\label{cof1}
			\frac{\Cof_{1, j}[A_n]}{\det[A_n]}=\frac{\bar{\eta}_{j}\,\underbar{$\eta$}^{n}_{j}}{\bar{\gamma}_{j}\,\underbar{$\gamma$}^{n}_{j}},
		\end{equation}
		Furthermore, for each $i\in\{2,\dots,n+1\}$ fixed,
		\begin{equation}\label{cof2}
			\sum_{k=1}^{n+1}\frac{{\beta^{\tmn}_{i-1}}\Cof_{1,k}[A_{n}]}{\beta^{\tmp}_{i-1}-r_{k-1}}=0.
		\end{equation}
	\end{lema}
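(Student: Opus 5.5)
The plan is to read the cofactor ratios in \eqref{cof1} as the solution of a linear system, via Cramer's rule, and then solve that system with a partial-fraction identity. The second identity \eqref{cof2} will follow from the ``alien cofactor'' expansion. I read $\beta^{\tmn}_{i-1}$ in \eqref{cof2} as $\beta^{\tmp}_{i-1}$; this is presumably a typo, and in any case the constant factor is irrelevant to the vanishing of the sum.

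\textbf{Step 1: $\det[A_n]\neq0$.} Suppose $A_n x=0$ for some $x=(x_1,\dots,x_{n+1})^{\trans}$. Define $G(z)\eqdef\sum_{j=1}^{n+1}x_j/(z-r_{j-1})$. The first row gives $\sum_j x_j=0$, so $G(z)=P(z)/\prod_{j}(z-r_{j-1})$ with $\deg P\leq n-1$. Row $i+1$ of $A_n$ says $\beta^{\tmp}_i G(\beta^{\tmp}_i)=0$, so $G(\beta^{\tmp}_i)=0$ for $i=1,\dots,n$. By \eqref{des} the $\beta^{\tmp}_i$ are distinct and differ from every $r_{j-1}$. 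Hence $P$ has $n$ distinct zeros, so $P\equiv0$, all residues $x_j$ vanish, and $A_n$ is invertible.

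\textbf{Step 2: Cramer's rule and partial fractions.} Let $x$ solve $A_nx=e_1$. Cramer's rule gives $x_j=\Cof_{1,j}[A_n]/\det[A_n]$, because replacing column $j$ by $e_1$ and expanding along that column gives exactly the $(1,j)$ cofactor. Now take the proper rational function
\begin{equation*}
F(z)\eqdef\frac{\prod_{k=1}^{n}(z-\beta^{\tmp}_k)}{\prod_{l=1}^{n+1}(z-r_{l-1})}=\sum_{j=1}^{n+1}\frac{c_j}{z-r_{j-1}},\qquad c_j=\frac{\prod_{k=1}^{n}(r_{j-1}-\beta^{\tmp}_k)}{\prod_{l\neq j}(r_{j-1}-r_{l-1})}.
\end{equation*}
Since $zF(z)\to1$ as $z\to\infty$, we get $\sum_jc_j=1$. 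Since $F(\beta^{\tmp}_i)=0$, we get $\sum_j c_j\beta^{\tmp}_i/(\beta^{\tmp}_i-r_{j-1})=0$ for each $i$. So $c$ solves $A_nc=e_1$, and by uniqueness (Step 1) $x=c$.

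\textbf{Step 3: sign bookkeeping.} It remains to match $c_j$ with \eqref{co1}. The numerator of $c_j$ splits into the factors with $k\leq j-1$, which give $\bar\eta_j$, and those with $k\geq j$, which give $(-1)^{n-j+1}\underbar{$\eta$}^n_j$. The denominator splits into $l<j$, giving $\bar\gamma_j$, and $l>j$, giving $(-1)^{n+1-j}\underbar{$\gamma$}^n_j$. The two sign factors are equal and cancel, which yields \eqref{cof1}. This index and sign check, including the degenerate conventions of empty products when $j=1$ or $j=n+1$, is the only delicate point; the rest is mechanical.

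\textbf{Step 4: identity \eqref{cof2}.} For $i\in\{2,\dots,n+1\}$, the entries of row $i$ of $A_n$ are $\beta^{\tmp}_{i-1}/(\beta^{\tmp}_{i-1}-r_{k-1})$. Therefore $\sum_k \beta^{\tmp}_{i-1}\Cof_{1,k}[A_n]/(\beta^{\tmp}_{i-1}-r_{k-1})$ is the Laplace expansion along the first row of the matrix obtained from $A_n$ by replacing row $1$ with row $i$. That matrix has two equal rows, so its determinant, and hence the sum, is $0$.

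Equivalently, dividing by $\det[A_n]\neq0$ (Step 1), this is row $i$ of the relation $A_nc=e_1$ from Step 2.
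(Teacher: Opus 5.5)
Your proof is correct. For \eqref{cof2} it is the paper's argument: the sum is the first-row expansion of a matrix with two equal rows. For \eqref{cof1} you take a different route.

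The paper works directly on the determinant. Subtracting column $j$ of $A_n$ from every other column turns the first row into $e_j^{\trans}$. It then extracts $r_{j'-1}-r_{j-1}$ from each column $j'\neq j$ and $(\beta^{\tmp}_{i-1}-r_{j-1})^{-1}$ from each row $i\geq 2$, and what remains is exactly $\Cof_{1,j}[A_n]$. This gives the unconditional identity $\det[A_n]\prod_{k=2}^{n+1}(\beta^{\tmp}_{k-1}-r_{j-1})=\Cof_{1,j}[A_n]\prod_{k\neq j}(r_{k-1}-r_{j-1})$. The sign bookkeeping afterwards is the same as yours.

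You instead recognize the vector $(\Cof_{1,j}[A_n]/\det[A_n])_j$ as $A_n^{-1}e_1$. You then exhibit that solution as the residues of $\prod_{k}(z-\beta^{\tmp}_k)/\prod_{l}(z-r_{l-1})$.

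The paper's column reduction needs no uniqueness argument. It is also the template reused for $\Cof_{i,j}[A_n]$, $i\geq 2$, in Lemma \ref{p2A3}. However, it never shows $\det[A_n]\neq 0$: its identity is equally consistent with both sides vanishing. Your Step~1 supplies that nonsingularity, which the statement tacitly requires.

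Your residue viewpoint also reaches further than this lemma. By \eqref{K2}, $K_{j-1}=\mathcal{R}^{m_{\tmp}}c_j/r_{j-1}^{2}$ with your residues $c_j$. Evaluating $F$ and its logarithmic derivative at $z=0$, where $F(0)=-1/\mathcal{R}^{m_{\tmp}}$, gives \eqref{cond2K} at once. It also gives $\sum_j K_{j-1}=\sum_{i=1}^{m_{\tmp}+1}1/r_{i-1}-\sum_{i=1}^{m_{\tmp}}1/\beta^{\tmp}_{i}$, i.e.\ the closed form \eqref{b}, without the induction behind \eqref{Cof5}.
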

	The following corollary is a direct result of Lemma \ref{p2A2}, therefore its proof will be omitted.   
	\begin{corol}
			For each $j\in\{1,\dots,n+1\}$ fixed, let $A^{j}_{n}[I_{n+1}^{\trans}]$ be the matrix formed by replacing the $j$-th column of $A_{n}$ by $I^{\trans}_{n+1}$, where $I_{n+1}$ is as in \eqref{ma1}.  Then,
		\begin{equation}\label{cof3}
			\det [A^{j}_{n}[I_{n+1}^{\trans}]]=\frac{\mathcal{R}^{n}}{r_{j-1}}\Cof_{1, j}[A_n],
		\end{equation}
		where $\mathcal{R}^{n}$ is defined as in \eqref{Cof3}.
	\end{corol}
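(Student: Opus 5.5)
The identity \eqref{cof3} should follow from Cramer's rule combined with the cofactor formula \eqref{cof1} of Lemma \ref{p2A2}. The idea is to use multilinearity of the determinant in the $j$-th column, rewriting $I^{\trans}_{n+1}$ in terms of the columns of $A_n$.

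The $k$-th column of $A_n$ is $c_k=(1,\beta^{\tmp}_1/(\beta^{\tmp}_1-r_{k-1}),\dots,\beta^{\tmp}_n/(\beta^{\tmp}_n-r_{k-1}))^{\trans}$. Write $g(r)=(1,\beta^{\tmp}_1/(\beta^{\tmp}_1-r),\dots,\beta^{\tmp}_n/(\beta^{\tmp}_n-r))^{\trans}$, so that $c_k=g(r_{k-1})$ and $I^{\trans}_{n+1}=g(0)$.

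By Cramer's rule, $\det[A^{j}_{n}[I^{\trans}_{n+1}]]=\det[A_n]\,x_j$, where $x$ solves $A_nx=g(0)$. This means finding coefficients $x_k$ with
\[
\sum_k x_k\frac{\beta^{\tmp}_i}{\beta^{\tmp}_i-r_{k-1}}=1 \quad\text{for } i=1,\dots,n, \qquad \sum_k x_k=1.
\]

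I would obtain $x$ by partial fractions. Consider the rational function
\[
F(r)=\frac{\prod_{i=1}^{n}(\beta^{\tmp}_i-r)}{\prod_{k=1}^{n+1}(r_{k-1}-r)}\cdot\frac{\prod_k r_{k-1}}{\prod_i\beta^{\tmp}_i}.
\]
The denominator has degree $n+1$ and the numerator degree $n$, so $F$ decomposes as $F(r)=\sum_k x_k\, r_{k-1}/(r_{k-1}-r)$, with residue-type coefficients $x_k$. Evaluating this at $r=0$ gives $F(0)=1$. Evaluating at $r=\beta^{\tmp}_i$ gives $F(\beta^{\tmp}_i)=0$.

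This is not yet the required system, so I would rather follow the paper's setup directly. Using \eqref{cof2} and $\sum_k \Cof_{1,k}[A_n]\,c_k=\det[A_n]e_1$, the vector $g(0)$ should be expressed through $\sum_k \Cof_{1,k}[A_n]/r_{k-1}\cdot c_k$. This requires checking, for each row $i$, that
\[
\sum_k \frac{\Cof_{1,k}}{r_{k-1}}\cdot\frac{\beta_i}{\beta_i-r_{k-1}}=\sum_k\frac{\Cof_{1,k}}{r_{k-1}}+\sum_k\frac{\Cof_{1,k}}{\beta_i-r_{k-1}}.
\]
The second sum vanishes by \eqref{cof2}. Hence every row of $\sum_k(\Cof_{1,k}/r_{k-1})c_k$ equals the common scalar $S=\sum_k \Cof_{1,k}[A_n]/r_{k-1}$. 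Therefore $x_k=\Cof_{1,k}[A_n]/(r_{k-1}S)$ solves $A_nx=g(0)$, provided $S\neq0$. Cramer's rule then gives
\[
\det[A^j_n[I^{\trans}]]=\frac{\det[A_n]\,\Cof_{1,j}[A_n]}{r_{j-1}\,S}.
\]

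It remains to show $\det[A_n]/S=\mathcal{R}^n$, i.e. $\sum_k \Cof_{1,k}/(r_{k-1}\det A_n)=1/\mathcal{R}^n$. This is the main obstacle. By \eqref{cof1} the summand equals $\bar\eta_k\underbar{$\eta$}^n_k/(r_{k-1}\bar\gamma_k\underbar{$\gamma$}^n_k)$. Up to sign, this is the residue at $r=r_{k-1}$ of
\[
G(r)=\frac{\prod_{i=1}^n(\beta^{\tmp}_i-r)}{r\prod_{k}(r_{k-1}-r)}.
\]
Since $\deg$ numerator $<\deg$ denominator $-1$, the sum of all residues of $G$ vanishes. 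The residue at $r=0$ is $\prod\beta^{\tmp}_i/\prod r_{k-1}=1/\mathcal{R}^n$. So the sum over the $r_{k-1}$ residues equals $1/\mathcal{R}^n$ after the appropriate sign. I would check the signs carefully against the orderings in \eqref{co1}, since the products there are arranged so that all factors are positive, and this sign bookkeeping is where errors would arise. The same residue identity also gives $S\neq0$.
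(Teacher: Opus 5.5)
Your proof is correct once the sign you left open is settled (see below), but it follows a different route from the short argument the paper alludes to.

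\textbf{The paper's route.} The paper gives no proof and calls \eqref{cof3} a direct consequence of Lemma \ref{p2A2}. The point is that the computation \eqref{m1} is an identity in the free variable $r_{j-1}$:
$\det[A_n]=\frac{\varrho^{1}_{j}}{\varrho^{2}_{j}}\Cof_{1,j}[A_n]$, with $\varrho^{1}_{j}=\prod_{k\neq j}(r_{k-1}-r_{j-1})$ and $\varrho^{2}_{j}=\prod_{k=2}^{n+1}(\beta^{\tmp}_{k-1}-r_{j-1})$.
Here $\Cof_{1,j}[A_n]$ does not involve column $j$, and the derivation uses neither \eqref{eqaux} nor the ordering \eqref{des}. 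The column $I^{\trans}_{n+1}$ is exactly the $j$-th column of $A_n$ with $r_{j-1}$ replaced by $0$. So the same column operations give
$\det[A^{j}_{n}[I^{\trans}_{n+1}]]=\frac{\prod_{k\neq j}r_{k-1}}{\prod_{k=2}^{n+1}\beta^{\tmp}_{k-1}}\Cof_{1,j}[A_n]=\frac{\mathcal{R}^{n}}{r_{j-1}}\Cof_{1,j}[A_n]$.

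\textbf{Your route.} You solve $A_nx=I^{\trans}_{n+1}$ by Cramer's rule. The alien-cofactor relation \eqref{cof2} shows $x_k\propto\Cof_{1,k}[A_n]/r_{k-1}$. You then need a separate residue computation to get the normalisation $S=\det[A_n]/\mathcal{R}^{n}$. This is longer, but it uses Lemma \ref{p2A2} only through its stated conclusions. It also yields the whole solution vector and the identity $\sum_k\Cof_{1,k}[A_n]/r_{k-1}=\det[A_n]/\mathcal{R}^{n}$ as by-products.

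\textbf{The sign.} The sign you left open causes no trouble. By \eqref{m2}, the factors $(-1)^{k-1}$ cancel in \eqref{cof1}, so for every $k$,
$\frac{\Cof_{1,k}[A_n]}{r_{k-1}\det[A_n]}=\frac{\varrho^{2}_{k}}{r_{k-1}\varrho^{1}_{k}}=-\mathrm{Res}_{r=r_{k-1}}G$
exactly, with no $k$-dependent sign. The vanishing of the total residue of $G$ then gives that the sum equals $\mathrm{Res}_{r=0}G=1/\mathcal{R}^{n}$. In particular $S\neq0$.

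\textbf{The discarded attempt.} The partial-fraction attempt you abandoned was already a complete proof. Since $\frac{\beta^{\tmp}_i}{\beta^{\tmp}_i-r}=1-\frac{r}{r-\beta^{\tmp}_i}$, you have $\sum_k x_k\frac{\beta^{\tmp}_i}{\beta^{\tmp}_i-r_{k-1}}=F(0)-F(\beta^{\tmp}_i)=1-0$. The first equation is $\sum_k x_k=F(0)=1$. So the partial-fraction coefficients do solve the system. Reading them off gives
$x_j=\frac{\mathcal{R}^{n}}{r_{j-1}}\frac{\varrho^{2}_{j}}{\varrho^{1}_{j}}=\frac{\mathcal{R}^{n}}{r_{j-1}}\frac{\Cof_{1,j}[A_n]}{\det[A_n]}$.
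Cramer's rule then gives \eqref{cof3} directly, without \eqref{cof2} or $S$.
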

	
	\begin{lema}\label{p2A3}
		For each $(i,j)\in\{2,\dots,n+1\}\times\{1,\dots,n+1\}$ fixed,
		\begin{align}
			&\Cof_{i,j}[A_{n}]=\frac{\mathcal{M}^{n}_{i-1}}{\beta^{\tmp}_{i-1}-r_{j-1}}\Cof_{1,j}[A_{n}],\label{Cof2}\\
			&\mathcal{R}^{n}=r_{j-1}\Bigg[1+\sum_{i=2}^{n+1}\frac{\mathcal{M}_{i-1}^{n}}{\beta_{i-1}^{\tmp}-r_{j-1}}\Bigg],\label{Rn}\\
			&\frac{1}{\mathcal{R}^{n}}\left[1+\sum_{i=2}^{n+1}\frac{\mathcal{M}_{i-1}^{n}}{\beta_{i-1}^{\tmp}}\right]=\sum_{i=1}^{n+1}\frac{1}{r_{i-1}}-\sum_{i=2}^{n+1}\frac{1}{\beta_{i-1}^{\tmp}}.\label{Cof5}
		\end{align}
		where  $\mathcal{R}^{n}$  is given in  \eqref{Cof3} and  $\mathcal{M}^{n}_{{i-1}}$ is defined as follows
		\begin{equation}\label{Cof3.1}
			\mathcal{M}^{n}_{{i-1}}=\frac{\underbar{$\nu$}_{i}\,\bar{\nu}^{n}_{i}}{\beta^{\tmp}_{i-1}\,\underbar{$\theta$}_{i}\,\bar{\theta}^{n}_{i}}.
		\end{equation}
	\end{lema}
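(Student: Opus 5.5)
The plan is to exploit the Cauchy-type structure of $A_n$: its first row is $I_{n+1}$, and its remaining rows are evaluations of the rational functions $r\mapsto \beta^{\tmp}_{i-1}/(\beta^{\tmp}_{i-1}-r)$ at the points $r=r_0,\dots,r_n$. Throughout I take $\det[A_n]\neq0$ as given, since it is already implicit in \eqref{cof1}.

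\emph{Step 1: proving \eqref{Cof2}.} For fixed $i\in\{2,\dots,n+1\}$, the cofactor vector $(\Cof_{i,j}[A_n])_{j}$ is characterized, up to a scalar, as the null vector of the $n\times(n+1)$ matrix obtained from $A_n$ by deleting row $i$. This holds because, for every row $k\neq i$, the alternative expansion gives $\sum_j (A_n)_{k,j}\Cof_{i,j}[A_n]=0$, and the deleted matrix has rank $n$ since $\det[A_n]\neq 0$. I would then check that the vector
\[
w_j=\frac{\Cof_{1,j}[A_n]}{\beta^{\tmp}_{i-1}-r_{j-1}}
\]
is orthogonal to all rows except row $i$.
\begin{itemize}
\item \emph{First row.} $\sum_j w_j = 0$ is exactly \eqref{cof2} (after dividing by the constant factor there).
\item \emph{Row $k\neq 1,i$.} Use the partial fraction decomposition
\[
\frac{1}{(\beta^{\tmp}_{i-1}-r)(\beta^{\tmp}_{k-1}-r)}=\frac{1}{\beta^{\tmp}_{k-1}-\beta^{\tmp}_{i-1}}\Big[\frac{1}{\beta^{\tmp}_{i-1}-r}-\frac{1}{\beta^{\tmp}_{k-1}-r}\Big].
\]
This reduces $\sum_j w_j\,\beta^{\tmp}_{k-1}/(\beta^{\tmp}_{k-1}-r_{j-1})$ to a combination of the two sums in \eqref{cof2}, both of which vanish.
\end{itemize}
Hence $\Cof_{i,j}[A_n]=\mathcal{M}^n_{i-1}\,w_j$ for some scalar $\mathcal{M}^n_{i-1}$.

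\emph{Step 2: identifying the constant $\mathcal{M}^n_{i-1}$.} To show that the scalar equals \eqref{Cof3.1}, I would compute a single cofactor explicitly, say $j=1$. Expand the minor of $A_n$ with row $i$ and column $1$ deleted by the same Cauchy-determinant device used for Lemma \ref{p2A2}:
\begin{itemize}
\item subtract columns to kill the row of ones;
\item factor out the differences;
\item apply the classical formula $\det[1/(x_a-y_b)]=\prod_{a<b}(x_b-x_a)(y_a-y_b)/\prod_{a,b}(x_a-y_b)$.
\end{itemize}
Divide the result by the expression for $\Cof_{1,1}[A_n]$ from \eqref{cof1}. The products should telescope into $\underline{\nu}_i\bar\nu^n_i/(\beta^{\tmp}_{i-1}\underline\theta_i\bar\theta^n_i)$. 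This bookkeeping of signs and of the boundary conventions for empty products is the step I expect to be the main obstacle. As a cross-check on the constant, I would use the residue interpretation from Step 4 below: $\mathcal{M}^n_{i-1}$ must equal $\beta^{\tmp}_{i-1}$ times the residue-type expression $\prod_j(\beta^{\tmp}_{i-1}-r_{j-1})/\big(\beta^{\tmp}_{i-1}\prod_{k\neq i}(\beta^{\tmp}_{k-1}-\beta^{\tmp}_{i-1})\big)$, suitably signed.

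\emph{Step 3: proving \eqref{Rn}.} Expand $\det[A^j_n[I^{\trans}_{n+1}]]$ along its $j$-th column, which consists entirely of ones. This gives $\sum_{i=1}^{n+1}\Cof_{i,j}[A_n]$. By \eqref{Cof2} the sum equals
\[
\Cof_{1,j}[A_n]\Big[1+\sum_{i=2}^{n+1}\frac{\mathcal{M}^n_{i-1}}{\beta^{\tmp}_{i-1}-r_{j-1}}\Big].
\]
Comparing with \eqref{cof3} and cancelling $\Cof_{1,j}[A_n]\neq0$ (it is nonzero by \eqref{cof1} and \eqref{des}) yields \eqref{Rn}.

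\emph{Step 4: proving \eqref{Cof5}.} Define
\[
g(r)=1+\sum_{i=2}^{n+1}\frac{\mathcal{M}^n_{i-1}}{\beta^{\tmp}_{i-1}-r}-\frac{\mathcal{R}^n}{r}.
\]
Then $r\prod_i(\beta^{\tmp}_{i-1}-r)\,g(r)$ is a polynomial of degree $n+1$ with leading coefficient $(-1)^n$. By \eqref{Rn} it vanishes at the $n+1$ roots $r_0,\dots,r_n$, so
\[
g(r)=\frac{\prod_j(r-r_{j-1})}{r\prod_i(r-\beta^{\tmp}_{i-1})}.
\]
Expanding the right-hand side near $r=0$ gives
\[
g(r)=-\frac{\mathcal{R}^n}{r}\Big(1+r\Big[\sum_i\tfrac{1}{\beta^{\tmp}_{i-1}}-\sum_j\tfrac{1}{r_{j-1}}\Big]+O(r^2)\Big),
\]
using $\prod_j r_{j-1}/\prod_i\beta^{\tmp}_{i-1}=\mathcal{R}^n$. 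The $1/r$ terms cancel against $-\mathcal{R}^n/r$. Letting $r\to0$ in $g(r)+\mathcal{R}^n/r$ then gives
\[
1+\sum_i\mathcal{M}^n_{i-1}/\beta^{\tmp}_{i-1}=\mathcal{R}^n\Big(\sum_j 1/r_{j-1}-\sum_i 1/\beta^{\tmp}_{i-1}\Big),
\]
which is \eqref{Cof5}.
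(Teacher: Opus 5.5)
\textbf{Gap in Step 2.} Your Steps 1, 3 and 4 are correct. Step 2, however, is a genuine gap.

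Step 1 only gives
\[
\Cof_{i,j}[A_n]=\frac{\kappa_i}{\beta^{\tmp}_{i-1}-r_{j-1}}\,\Cof_{1,j}[A_n]
\]
for some scalar $\kappa_i$ independent of $j$. The lemma asserts this with the specific constant $\kappa_i=\mathcal{M}^n_{i-1}$ from \eqref{Cof3.1}, and \eqref{Rn} and \eqref{Cof5} are statements about that particular constant.

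Step 2 is meant to supply the identification, but it is never carried out. ``The products should telescope'' is not an argument, and you flag it yourself as the main obstacle.

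The cross-check you offer is also off by a factor. $\mathcal{M}^n_{i-1}$ equals, up to sign, the expression $\prod_j(\beta^{\tmp}_{i-1}-r_{j-1})/\big(\beta^{\tmp}_{i-1}\prod_{k\neq i}(\beta^{\tmp}_{k-1}-\beta^{\tmp}_{i-1})\big)$ itself, not $\beta^{\tmp}_{i-1}$ times it. For $n=1$, the values $\Cof_{2,1}[A_1]=-1$ and $\Cof_{1,1}[A_1]=\beta^{\tmp}_1/(\beta^{\tmp}_1-r_1)$ force $\kappa_2=(\beta^{\tmp}_1-r_0)(r_1-\beta^{\tmp}_1)/\beta^{\tmp}_1$, which is \eqref{Cof3.1} and carries the factor $1/\beta^{\tmp}_1$. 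As written, your proposal therefore proves the three identities only for an unidentified constant.

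\textbf{How to close it.} The fix is already in your proposal; you only need to reorder the steps. Steps 3 and 4 never use the value of $\kappa_i$. Step 3 needs only the $j$-independence of $\kappa_i$ together with \eqref{cof3}, and Step 4 needs only \eqref{Rn}.

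So run Step 1, then Step 3, then the interpolation part of Step 4, all with the unknown $\kappa_i$. This gives
\[
1+\sum_{i=2}^{n+1}\frac{\kappa_i}{\beta^{\tmp}_{i-1}-r}-\frac{\mathcal{R}^n}{r}=\frac{\prod_{j=1}^{n+1}(r-r_{j-1})}{r\prod_{i=2}^{n+1}(r-\beta^{\tmp}_{i-1})}.
\]
Comparing coefficients of the simple pole at $r=\beta^{\tmp}_{i-1}$ yields
\[
\kappa_i=-\frac{\prod_{j=1}^{n+1}(\beta^{\tmp}_{i-1}-r_{j-1})}{\beta^{\tmp}_{i-1}\prod_{k\neq i}(\beta^{\tmp}_{i-1}-\beta^{\tmp}_{k-1})}.
\]
By \eqref{des}, exactly $n+2-i$ factors in the numerator and $n+1-i$ factors in the denominator are negative. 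Hence $\kappa_i=\underline{\nu}_i\bar{\nu}^n_i/(\beta^{\tmp}_{i-1}\underline{\theta}_i\bar{\theta}^n_i)=\mathcal{M}^n_{i-1}$. The $r\to0$ expansion you already wrote then gives \eqref{Cof5} for this constant.

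\textbf{Comparison with the paper.} With this reordering your argument is complete, and it is genuinely different from the paper's.
\begin{itemize}
\item For \eqref{Cof2}, the paper obtains the explicit constant directly by row and column reductions of $A_n$, compared against \eqref{cof1}. Your null-space characterization of the cofactor row, plus rational interpolation, avoids the sign bookkeeping of those reductions.
\item For \eqref{Cof5}, the paper uses induction on $n$; your interpolation argument replaces the induction.
\item Your route also explains where $\mathcal{M}^n_{i-1}$ comes from: it is (minus) a residue.
\item Your Step 3 coincides with the paper's derivation of \eqref{Rn}.
\end{itemize}
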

	
				
	
	\subsubsection{Proof of Proposition \ref{pro1}}
	\begin{proof}[Proof of Proposition \ref{pro1}]
		Let $(x,y)$ belong in $\mathcal{W}$. By applying \eqref{cond3.0},  \eqref{generador}, \eqref{cond1}, and  \eqref{cond2} in \eqref{cond2.1}, we deduce the following  partial integro-differential equation
		
		\begin{align}\label{eqfund}
			&\frac{\sigma^{2}}{2}\parcial{^{2}}{x^{2}}v_{b^{\star}}(x,y)+\mu\parcial{}{x}v_{b^{\star}}(x,y)-[\rho+\lambda_{\tmp}+\lambda_{\tmp}]v_{b^{\star}}(x,y)\notag\\
			&\quad+\lambda_{\tmn}\sum_{k=2}^{m_{\tmn}+1}\beta_{k-1}^{\tmn}\omega^{\tmn}_{k-1}\expo^{-\beta_{k-1}^{\tmn}x}\int_{-\infty}^{x}v_{b^{\star}}(z,y)\expo^{\beta_{k-1}^{\tmn}z}\der z\notag\\
			&\quad+\lambda_{\tmp}\sum_{k=2}^{m_{\tmp}+2}\beta_{k-1}^{\tmp}\omega^{\tmp}_{k-1}\expo^{\beta_{k-1}^{\tmp}x}\bigg\{\int_{x}^{b^{\star}}v_{b^{\star}}(z,y)\expo^{-\beta_{k-1}^{\tmp}z}\der z +C_{k-1}(y;b^{\star})\bigg\}=0,
		\end{align}
		where $C_{k-1}(y; b^{\star})$ is defined as
		\begin{align}\label{eqfund0.1}
			&C_{k-1}(y;b^{\star})\notag\\
			&=\int_{b^{\star}}^{\alpha y+b^{\star}}v_{b^{\star}}(b^{\star},y-\mathbb{Y}_{b^{\star}}(z))\expo^{-\beta_{k-1}^{\tmp}z}\der z\notag\\
			&\quad+\int_{b^{\star}}^{\alpha y+b^{\star}}\bigg[\frac{[z-c]^{2}-[c-b^{\star}]^{2}}{2\alpha}\bigg]\expo^{-\beta_{k-1}^{\tmp}z}\der z+\int_{\alpha y+b^{\star}}^{\infty}\bigg[[z-c]y-\alpha\frac{y^{2}}{2}\bigg]\expo^{-\beta_{k-1}^{\tmp}z}\der z\notag\\
			&=\int_{b^{\star}}^{\alpha y+b^{\star}}v_{b^{\star}}(b^{\star},y-\mathbb{Y}_{b^{\star}}(z))\expo^{-\beta_{k-1}^{\tmp}z}\der z+\frac{[1+\beta_{k-1}^{\tmp}[b^{\star}-c]] [1-\expo^{-\alpha  \beta_{k-1}^{\tmp}   y}] \expo^{-\beta_{k-1}^{\tmp} b^{\star}}}{\alpha  [\beta_{k-1}^{\tmp} ]^{3}}.
		\end{align} 	
		By applying the differential operator $\prod_{j=2}^{m_{\tmn}+1}\left(\parcial{}{x}+\beta_{j-1}^{\tmn}\right)\prod_{i=2}^{m_{\tmp}+1}\left(\parcial{}{x}-\beta_{i-1}^{\tmp}\right)$ on the integro-dif\-fe\-ren\-tial equation \eqref{eqfund}, we determine that $v_{b^{\star}}$ must satisfy a linear homogenous differential equation with constant coefficients of order $m_{\tmp}+m_{\tmn}+2$, whose auxiliary equation is provided by \eqref{eqaux}. Utilizing Theorem 3.2 in \cite{CK2011},  we have that 
		$$v_{b^{\star}}(x,y)=\sum_{i=1}^{m_{\tmp}+1}A^{\tmp}_{i-1}(y)\expo^{r_{i-1} x}+\sum_{j=1}^{m_{\tmn}+1}A^{\tmn}_{j-1}(y)\expo^{r^{\tmn}_{j-1} x}$$ 
		is a solution of \eqref{eqfund} for $(x,y)\in\mathcal{W}$, where $A^{\tmp}_{i-1}$ and $A^{\tmn}_{j-1}$ are functions that do not depend on   $x$. However, since the solution to the HJB equation must satisfies  \eqref{l1}, it follows that $A^{\tmn}_{j-1}$ must be zero for $j \in \{1, \dots, m_{\tmn}+1\}$.   Then,
		\begin{equation}\label{v1}
			v_{b^{\star}}(x,y)=\sum_{j=1}^{m_{\tmp}+1}A^{\tmp}_{j-1}(y)\expo^{r_{j-1} x}\quad \text{for}\ (x,y)\in\mathcal{W}.
		\end{equation} 
		Since we look for a solution that is continuous differentiable at the boundary $x=b^{\star}$, by \eqref{v1} and \eqref{v2}, we get 
		$\sum_{j=1}^{m_{\tmp}+1}\expo^{r_{j-1} b^{\star}}[\alpha r_{j-1}A^{\tmp}_{j-1}(y)+A_{j-1}^{\tmp\,\prime}(y)]=b^{\star}-c$.
		This equation implies the existence of constants $K_{j-1}$ (with $j\in\{1, \dots, m_{\tmp}+1\}$) such that  \eqref{cond1K} holds, and
		\begin{equation}\label{Ai}
			\alpha r_{j-1}A^{\tmp}_{j-1}(y)+A_{j-1}^{\tmp\,\prime}(y)=K_{j-1}\expo^{-r_{j-1} b^{\star}}\quad \text{for}\  j\in\{1,\dots ,m_{\tmp}+1\}.
		\end{equation}
		Solving \eqref{Ai}, we obtain $A^{\tmp}_{j-1}(y)=\frac{K_{j-1}}{\alpha r_{j-1}}\left(1- {\ell_{j-1}}\expo^{-\alpha r_{j-1} y}\right)\expo^{-r_{j-1}b^{\star}}$.
		Since 
		$$\sum_{j=1}^{m_{\tmp}+1}\frac{K_{j-1}}{\alpha r_{j-1}}(1- {\ell_{j-1}})\expo^{-r_{j-1 }b^{\star}}\expo^{r_{j-1}x}=v_{b^{\star}}(x,0)=0\quad \text{for all $x\in\R$,}$$ 
		we conclude that $\ell_{j-1}=1$ for all $j\in\{1,\dots,m_{\tmp}+1\}$. Consequently,
		\begin{equation}\label{v3}
			A^{\tmp}_{j-1}(y)=\frac{K_{j-1}}{\alpha r_{j-1}}\left(1-\expo^{-\alpha r_{j-1} y}\right)\expo^{-r_{j-1}b^{\star}}.
		\end{equation}
		From \eqref{cond1}--\eqref{cond2.1}, \eqref{v1} and \eqref{v3}, we derive that $v_{b^{\star}}$ is given by \eqref{v1.1}. Now, through   a smooth fit analysis, we shall establish that  $b^{\star}$  and $K_{i}$ are as in \eqref{b} and \eqref{K2}, respectively. By differentiating  \eqref{v2} w.r.t. $x$,  we observe $\alpha\frac{\partial^2}{\partial x^{2}}v_{b^{\star}}(x,y)+\frac{\partial^2}{\partial x\partial y}v_{b^{\star}}(x,y)=1$ for $(x,y)\in\mathcal{S}_{1}$.
		For $v_{b^{\star}}$ to have continuous second derivatives at the boundary $x = b^{\star}$, it must satisfy
		$\lim_{x\uparrow b^{\star}}\bigg[\alpha\frac{\partial^2}{\partial x^{2}}v_{b^{\star}}(x,y)+\frac{\partial^2}{\partial x\partial y}v_{b^{\star}}(x,y)\bigg]=1$.
		Hence, based on this and utilizing \eqref{v1.1}, the parameters $K_{j-1}$ must satisfy the condition \eqref{cond2K}. Furthermore, considering \eqref{v1.1} again, for each $k \in {2, \ldots, m_{\tmp}+1}$,
		\begin{align*}
			&\int_{-\infty}^{x}v_{b^{\star}}(z,y)\expo^{\beta_{k-1}^{\tmn}z}\der z=\sum_{j=1}^{m_{\tmp}+1}\frac{A^{\tmp}_{j-1}(y)}{r_{j-1}+\beta_{k-1}^{\tmp}}\expo^{[r_{j-1}+\beta_{k-1}^{\tmp}]x},\\
			&\int_{x}^{b^{\star}}v_{b^{\star}}(z,y)\expo^{-\beta_{k-1}^{\tmp}z}\der z=\sum_{j=1}^{m_{\tmp}+1}\frac{A^{\tmp}_{j-1}(y)}{r_{j-1} -\beta_{k-1}^{\tmp}}\expo^{\beta_{k-1}^{\tmp}x}[\expo^{[r_{j-1} -\beta_{k-1}^{\tmp}]b^{\star}}-\expo^{[r_{j-1} -\beta_{k-1}^{\tmp}]x}],\\
			&\int_{b^{\star}}^{\alpha y+b^{\star}}v_{b^{\star}}(b^{\star},y-\mathbb{Y}_{b^{\star}}(z))\expo^{-\beta_{k-1}^{\tmp}z}\der z\notag\\
			&\quad=\sum_{j=1}^{m_{\tmp}+1}\frac{K_{j-1}\expo^{-\beta_{k-1}^{\tmp}b^{\star}}\Big[r_{j-1}[1-\expo^{-\beta_{k-1}^{\tmp}\alpha  y}]-\beta_{k-1}^{\tmp}[ 1-\expo^{-\alpha r_{j-1} y}]\Big] }{\alpha  \beta_{k-1}^{\tmp}  r_{j-1} (r_{j-1}-\beta_{k-1}^{\tmp} )}.
		\end{align*}	
		When applying the equations above  to \eqref{eqfund}--\eqref{eqfund0.1}, it can be verified that  for $(x,y)\in\mathcal{W}$, the following equation hods
		\begin{align}\label{eqfund2}
			&\sum_{j=1}^{m_{\tmp}+1}A^{\tmp}_{j-1}(y)\expo^{r_{j-1} x}\bigg[\frac{\sigma^{2}}{2}r^{2}_{j-1}+\mu r_{j-1}-[\rho+\lambda_{\tmp}+\lambda_{\tmn}]\notag\\
			&\quad+\lambda_{\tmn}\sum_{k=2}^{m_{\tmn}+1}\frac{\beta_{k-1}^{\tmn}\omega^{\tmn}_{k-1}}{r_{j-1}+\beta_{k-1}^{\tmn}}+\lambda_{\tmp}\sum_{k=2}^{m_{\tmp}+1}\frac{\beta_{k-1}^{\tmp}\omega^{\tmp}_{k-1}}{\beta_{k-1}^{\tmp}-r_{j-1}}\bigg]\notag\\
			&\quad +\frac{\lambda_{\tmp}}{\alpha}\sum_{k=2}^{m_{\tmp}+2}\omega^{\tmp}_{k-1}\expo^{\beta_{k-1}^{\tmp}[x-b^{\star}]}[1-\expo^{-\beta_{k-1}^{\tmp}\alpha  y}]\bigg[\sum_{j=1}^{m_{\tmp}+1}\frac{K_{j-1}}{r_{j-1} -\beta_{k-1}^{\tmp}}+\frac{[1+\beta_{k-1}^{\tmp}[b^{\star}-c]] }{ [\beta_{k-1}^{\tmp} ]^{2}}\bigg]=0.
		\end{align}
		Using \eqref{eqaux}, it follows that  \eqref{eqfund2}  holds  true if and only if for each $k\in\{2,\dots,m_{\tmp}+1\}$,  the  condition \eqref{cond3K} is met.
		Based on  the previously analysis, we get that  the constants $K_{j-1}$ and $b^{\star}$ must satisfy the system of equations  $\eqref{cond1K}$--$\eqref{cond3K}$. First, we    express $K_{j-1}$ in terms   of $b^{\star}$ by solving the quadratic system of equations  given by \eqref{cond1K} and \eqref{cond3K},  which is of size $m_{\tmp}+1$. This system of equations can be represented in  matrix form as  $A_{m_{\tmp}} K-C=0$,
		where $A_{m_{\tmp}}$ is as in \eqref{ma1}, $K\eqdef(K_{0},K_{1},\dots,K_{m_{\tmp}+1})^{\trans}$, and $C=\bigg(b^{\star}-c,b^{\star}-c+\frac{1}{\beta^{\tmp}_{1}},b^{\star}-c+\frac{1}{\beta^{\tmp}_{2}},\dots,b^{\star}-c+\frac{1}{\beta^{\tmp}_{m_{\tmp}}}\bigg)^{\trans}$. The notation $M^{\trans}$ signifies the transpose of any matrix $M$. By Cramer's rule, we obtain $K_{j-1}=\frac{\det[A^{j}_{m_{\tmp}}[C]]}{\det [A_{m_{\tmp}}]}$ for  $j\in\{1,\dots,m_{\tmp}+1\}$, 
		where $A^{j}_{m_{\tmp}}[C]$ is the matrix formed by replacing the $j$-th column of $A_{m_{\tmp}}$ by the column vector $C$.  Noticing that $C=[b^{\star}-c]I_{m_{\tmp}+1}^{\trans}+D$, with $D=\Big(0,\frac{1}{\beta^{\tmp}_{1}},\frac{1}{\beta^{\tmp}_{2}},\dots,\frac{1}{\beta^{\tmp}_{m_{\tmp}}}\Big)^{\trans}$, we have for $j\in\{1,\dots,m_{\tmp}+1\}$,
		\begin{align}\label{l2}
			K_{j-1}&=\frac{1}{\det [A_{m_{\tmp}}]}\{[b^{\star}-c]\det[A^{j}_{m_{p}}[I_{m_{\tmp+1}}^{\trans}]]+\det[A^{j}_{m_{p}}[D]]\}.
		\end{align}
		Using \eqref{Cof2}, notice that 
		\begin{align}\label{l3}
			\det[A^{j}_{m_{p}}[D]]=\sum_{k=2}^{m_{\tmp}+1}\frac{1}{\beta^{\tmp}_{k-1}}\Cof_{k,j}[A_{m_{\tmp}}]=\Cof_{1,j}[A_{m_{\tmp}}]\sum_{k=2}^{m_{\tmp}+1}\frac{\mathcal{M}_{k-1}^{m_{\tmp}}}{\beta^{\tmp}_{k-1}[\beta^{\tmp}_{k-1}-r_{j-1}]}.
		\end{align}
		Applying  \eqref{l3} in \eqref{l2} an considering \eqref{cof3}, we see that
		\begin{equation}\label{K1}
			K_{j-1}=\left[(b^{\star}-c)\frac{\mathcal{R}^{m_{\tmp}}}{r_{j-1}}+\sum_{k=2}^{{m_{\tmp}+1}}\frac{\mathcal{M}_{k-1}^{m_{\tmp}}}{\beta_{k-1}^{\tmp}[\beta_{k-1}^{\tmp}-r_{j-1}]}\right]\frac{\Cof_{1,j}[A_{m_{\tmp}}]}{\det [A_{m_{\tmp}}]}.
		\end{equation}
		Then, substituting \eqref{K1} into \eqref{cond2K} and using  the fact that 
		$\det [ A_{m_{\tmp}}]=\sum_{k=1}^{{m_{\tmp}}+1}\Cof_{1,k}[A_{m_{\tmp}}]$
		and 
		\begin{equation}\label{K1.1}
			-\frac{r_{j-1}}{\beta_{k-1}^{\tmp}-r_{j-1}}=1-\frac{\beta_{k-1}^{\tmp}}{\beta_{k-1}^{\tmp}-r_{j-1}}, 
		\end{equation}
		we obtain
		\begin{align*}
			1&=\sum_{j=1}^{{m_{\tmp}}+1} r_{j-1}\left[\left((b^{\star}-c)\frac{\mathcal{R}^{m_{\tmp}}}{r_{j-1}}+\sum_{k=2}^{{m_{\tmp}}+1}\frac{\mathcal{M}_{k-1}^{m_{\tmp}}}{\beta_{k-1}^{\tmp}(\beta_{k-1}^{\tmp}-r_{j-1})}\right)\frac{\Cof_{1,j}[A_{m_{\tmp}}]}{\det [A_{m_{\tmp}}]}\right]\\
			&=(b^{\star}-c)\mathcal{R}^{m_{\tmp}}-\sum_{k=2}^{{m_{\tmp}}+1}\frac{\mathcal{M}_{k-1}^{m_{\tmp}}}{\beta_{k-1}^{\tmp}\det [A_{m_{\tmp}}]}\left(\det[A_{m_{\tmp}}]-\beta_{k-1}^{\tmp}\sum_{j=1}^{{m_{\tmp}}+1}\frac{\Cof_{1,j}[A_{m_{\tmp}}]}{\beta_{k-1}^{\tmp}-r_{j-1}}\right)
			.		
		\end{align*}
		By \eqref{cof2}, we deduce that 
		\begin{equation}\label{b2}
		b^{\star}=c+\frac{1}{\mathcal{R}^{m_{\tmp}}}\bigg[1+\sum_{k=2}^{m_{\tmp}+1}\frac{\mathcal{M}_{k-1}^{m_{\tmp}}}{\beta_{k-1}^{\tmp}}\bigg].
		\end{equation}
		 Consequently, substituting \eqref{b2} in \eqref{K1} and utilizing \eqref{K1.1},  we find that $K_{j-1}$ is given in \eqref{K2}. Moreover, by \eqref{Cof3} and \eqref{cof1}, we conclude  that  $b^{\star}$ and $K_{j-1}$ are positive. Finally, $b^{\star}$ is given as in \eqref{b}, due to \eqref{Cof5}.\qed
	\end{proof}
	
	\subsubsection{Proof of Lemma \ref{ru}}
	\begin{proof}[Proof of Lemma \ref{ru}]
		By \eqref{eqaux}, observe that $K_{j-1}\,p(r_{j-1})=0$ for $j\in\{1,\dots,m_{\tmp}+1\}$. This implies that
		\begin{multline*}
			\frac{\sigma^{2}}{2}\sum_{j=1}^{m_{\tmp}+1}r_{j-1}^{2}K_{j-1}+\mu\sum_{j=1}^{m_{\tmp}+1}r_{j-1} K_{j-1}-(\rho+\lambda_{\tmn}+\lambda_{\tmp})\sum_{j=1}^{m_{\tmp}+1}K_{j-1}\\
			+\lambda_{\tmn}\sum_{k=2}^{m_{\tmn}+1}\sum_{j=1}^{m_{\tmp}+1}\frac{\omega_{k-1}^{\tmn}\beta_{k-1}^{\tmn}}{\beta_{k-1}^{\tmn}+r_{j-1}}K_{j-1}+\lambda_{\tmp}\sum_{k=2}^{m_{\tmn}+1}\omega_{k-1}^{\tmp}\sum_{j=1}^{m_{\tmp}+1}\frac{\beta_{k-1}^{\tmp}}{\beta_{k-1}^{\tmp}-r_{j-1}}K_{j-1}=0.
		\end{multline*}
		Using \eqref{K2} and considering that $\det[A_{m_{\tmp}}]=\sum_{j=1}^{m_{\tmp}+1}\Cof_{1,j}[A_{m_{\tmp}}]$,  it is straightforward to see that $\sum_{j=1}^{m_{\tmp}+1}r_{j-1}^{2}K_{j-1}=\mathcal{R}^{m_{\tmp}}$. Applying \eqref{cond1K}--\eqref{cond3K} to the  equation above, we get
		\begin{multline*}
			\frac{\sigma^{2}}{2}\mathcal{R}^{m_{\tmp}}+\mu-(\rho+\lambda_{\tmn}+\lambda_{\tmp})(b^{\star}-c)\\
			+\lambda_{\tmn}\sum_{k=2}^{m_{\tmn}+1}\sum_{j=1}^{m_{\tmp}+1}\frac{\omega_{k-1}^{\tmn}\beta_{k-1}^{\tmn}}{\beta_{k-1}^{\tmn}+r_{j-1}}K_{j-1}+
			\lambda_{\tmp}\sum_{k=2}^{m_{\tmn}}\omega_{k-1}^{\tmp}\left(b^{\star}-c+\frac{1}{\beta_{k-1}^{\tmp}}\right)=0.
		\end{multline*}
		Consequently, it follows that   \eqref{equ1} holds true, as per \eqref{cond3}. Using  \eqref{eqaux} again, we have that $\frac{K_{j-1}}{r_{j-1}}p(r_{j-1})=0$. Then, from here and taking into account \eqref{K1.1} and 
		\begin{equation}\label{K3}
			\frac{r_{j-1}}{\beta_{k-1}^{\tmn}+r_{j-1}}=1-\frac{\beta_{k-1}^{\tmn}}{\beta_{k-1}^{\tmn}+r_{j-1}},
		\end{equation}
		it follows that
		\begin{multline*}
			\frac{\sigma^{2}}{2}\sum_{j=1}^{m_{\tmp}+1}r_{j-1}K_{j-1}+\mu \sum_{j=1}^{m_{\tmp}+1}K_{j-1}-\rho\sum_{j=1}^{m_{\tmp}+1}\frac{K_{j-1}}{r_{j-1}}\\
			-\lambda_{\tmn}\sum_{k=2}^{m_{\tmn}+1}\sum_{j=1}^{m_{\tmp}+1}\frac{\omega_{k-1}^{\tmn}}{\beta_{k-1}^{\tmn}+r_{j-1}}K_{j-1}+\lambda_{\tmp}\sum_{k=2}^{m_{\tmp}+1}\sum_{j=1}^{m_{\tmp}+1}\frac{\omega_{k-1}^{\tmp}}{\beta_{k-1}^{\tmp}-r_{j-1}}K_{j-1}=0.
		\end{multline*}
		Hence, from \eqref{cond1K} and \eqref{cond2K} we obtain \eqref{equ2}. Finally, by \eqref{cond1K} and \eqref{cond2K}, $$\Xi^{\tmn}_{k-1}=\sum_{j=1}^{m_{\tmp}+1}\left(\frac{\beta_{k-1}^{\tmn}}{\beta_{k-1}^{\tmn}+r_{j-1}}-1+\frac{r_{j-1}}{\beta_{k-1}^{\tmn}}\right)K_{j-1}.$$
		From here and \eqref{K3}, we obtain the right-hand side of \eqref{equ3}.\qed
	\end{proof}
	
	\subsection{Proofs of some technical results discussed in Subsection \ref{B} }\label{C}
	\begin{proof}[Proof of Lemma \ref{p2A2}]
		Let $j\in\{1,\dots,n+1\}$ be fixed. By subtracting the column $j$ of $A_{n}$ from each of its  other  columns $j'\in\{1,\dots,n+1\}\setminus\{j\}$,  we obtain a matrix $C=(c_{i,j'})$ with entries given by
		\begin{align*}
			c_{i,j'}
			&=\begin{cases}
				0&\text{if}\ i=1\ \text{and}\ j'\in\{1,\dots,n+1\}\setminus\{j\},\\
				1& \text{if}\ i=1\ \text{and}\ j'=j,\\
				\frac{\beta^{\tmp}_{i-1}}{\beta^{\tmp}_{i-1}-r_{j-1}}& \text{if}\ i\in\{2,\dots,n+1\}\ \text{and}\ j'=j,\\
				\frac{\beta^{\tmp}_{i-1}[r_{j'-1}-r_{j-1}]}{[\beta^{\tmp}_{i-1}-r_{j-1}][\beta^{\tmp}_{i-1}-r_{j'-1}]}& \text{if}\ i\in\{2,\dots,n+1\}\ \text{and}\ j'\in\{1,\dots,n+1\}\setminus\{j\},
			\end{cases}
		\end{align*}
				that satisfies $\det[A_n]=\det[C] $. By extracting $r_{j'-1}-r_{j-1}$ from  each column $j'\in\{1,\dots,n+1\}\setminus\{j\}$,  and then extracting $\frac{1}{\beta_{i-1}-r_{j-1}}$ from each row $i\in\{2,\dots,n+1\}$ we have  that
				\begin{align}\label{m1}
					\det[A_n]&={\frac{\varrho^{1}_{j}}{ \varrho^{2}_{j}}}\det\left(\begin{matrix}
						0 & \cdots & 0 & 1 & 0 & \cdots  & 0\\
						\frac{\beta^{\tmp}_1}{\beta^{\tmp}_1-r_{0}} & \cdots& 	\frac{\beta^{\tmp}_1}{\beta^{\tmp}_1-r_{j-2}} & 	\beta^{\tmp}_1 & 	\frac{\beta^{\tmp}_1}{\beta^{\tmp}_1-r_{j}} & \cdots & 	\frac{\beta^{\tmp}_1}{\beta^{\tmp}_1-r_{n}} \\
						\vdots & \ddots & \vdots & \vdots &\vdots & \ddots & \vdots \\
						\frac{\beta^{\tmp}_n}{\beta^{\tmp}_n-r_{0}} & \cdots& 	\frac{\beta^{\tmp}_n}{\beta^{\tmp}_n-r_{j-2}} & \beta^{\tmp}_n & 	\frac{\beta^{\tmp}_n}{\beta^{\tmp}_n-r_{j}} & \cdots & 	\frac{\beta^{\tmp}_n}{\beta^{\tmp}_n-r_{n}} 
					\end{matrix}\right)
					=\frac{\varrho^{1}_{j}}{ \varrho^{2}_{j}}\Cof_{1,j}[ A_{n}],
				\end{align}
				with $\varrho^{1}_{j}\eqdef \prod_{\substack{k=1\\
							k\neq j}}^{n+1}(r_{k-1}-r_{j-1})$ and $\varrho^{2}_{j}\eqdef \prod_{k=2}^{n+1}(\beta^{\tmp}_{k-1}-r_{j-1})$. Observe that 
				\begin{align}\label{m2}
					\varrho^{1}_{j}=[-1]^{j-1} {\bar{\gamma}_{j}\, \underbar{$\gamma$}_{j}}\quad \text{and}\quad
					\varrho^{2}_{j}=[-1]^{j-1} {\bar{\eta}_{j}\,\underbar{$\eta$}_{j}},
				\end{align}
				because of \eqref{des}. Recall that $\bar{\gamma}_{j}$, $\underbar{$\gamma$}_{j}$, $\bar{\eta}_{j}$, and $\underbar{$\eta$}_{j}$ are defined in \eqref{co1}. Therefore,  by applying \eqref{m2} in \eqref{m1}, we obtain \eqref{cof1}. It can be easily verified using \eqref{ma1} that $\sum_{k=1}^{n+1}\frac{{\beta^{\tmn}_{i-1}}\Cof_{1,k}[A_{n}]}{\beta^{\tmp}_{i-1}-r_{k-1}}$ equals  the determinant of a  matrix whose first and $i$-th raw are identical, indicating that this determinant is zero,  thus  \eqref{cof2} holds. 
			\end{proof}
			
			\begin{proof}[Proof of Lemma \ref{p2A3}]
				 {For $n=1$, by \eqref{ma1}, it is straightforward to check the validity of \eqref{Cof2}. Let $n\geq2$ and}  fix $(i,j)\in\{2,\dots,n+1\}\times\{1,\dots,n+1\}$. By subtracting the $i$-th row  of $A_{n}$ from each of its  other  rows $i'\in\{1,\dots,n+1\}\setminus\{i\}$,  we obtain a matrix $C^{1}=(c^{1}_{i',j'})$ with entries given by
				\begin{align*}
					&c^{1}_{i',j'}=
					\begin{cases}
						\frac{-r_{j'-1}}{\beta^{\tmp}_{i-1}-r_{j'-1}}&\text{if}\  i'=1\ \text{and}\ j'\in\{1,\dots,n+1\},\\
						\frac{\beta^{\tmp}_{i-1}}{\beta^{\tmp}_{i-1}-r_{j'-1}}&\text{if}\ i'=i\ \text{and}\ j'\in\{1,\dots,n+1\},\\
						\frac{r_{j'-1}[\beta^{\tmp}_{i-1}-\beta^{\tmp}_{i'-1}]}{[\beta^{\tmp}_{i'-1}-r_{j'-1}][\beta^{\tmp}_{i-1}-r_{j'-1}]} &\text{if}\ i'\in\{2,\dots,n+1\}\setminus\{i\}\\ &\text{and}\ j'\in\{1,\dots,n+1\},\\
					\end{cases}
				\end{align*}
						that satisfies $\det[A_n]=\det[C^1] $. By extracting $\beta^{\tmp}_{i-1}-\beta^{\tmp}_{i'-1}$ from  each row $i'\in\{2,\dots,n+1\}\setminus\{i\}$,  and then extracting $\frac{1}{\beta^{\tmp}_{i-1}-r_{j'-1}}$ from each column $j'\in\{1,\dots,n+1\}$ we have  that
						\begin{align}\label{m3}
							\det[A_n]&={\frac{ \varrho^{3}_{i}}{\varrho^{4}_{i}}}\det[C^{2}],
							\end{align}
							with $\varrho^{3}_{i}\eqdef \prod_{\substack{k=2\\
											k\neq i}}^{n+1}[\beta^{\tmp}_{i-1}-\beta^{\tmp}_{k-1}]$,   $ \varrho^{4}_{i}\eqdef \prod_{k=1}^{n+1}[\beta^{\tmp}_{i-1}-r_{k-1}],$ 
							and $C^{2}=(c^{2}_{i',j'})$ where its entries are defined as
							\begin{equation*}
								c^{2}_{i',j'}=
								\begin{cases}
									-r_{j'-1}&\text{if}\  i'=1\ \text{and}\ j'\in\{1,\dots,n+1\},\\
									\beta^{\tmp}_{i-1}&\text{if}\ i'=i\ \text{and}\ j'\in\{1,\dots,n+1\},\\
									\frac{r_{j'-1}}{\beta^{\tmp}_{i'-1}-r_{j'-1}} &\text{if}\ i'\in\{2,\dots,n+1\}\setminus\{i\}\ \text{and}\ j'\in\{1,\dots,n+1\}.
								\end{cases}
							\end{equation*}
							By subtracting the column $j$ of $C^{2}$ from each of its  other  columns $j'\in\{1,\dots,n+1\}\setminus\{j\}$,  we have a matrix $C^{3}=(c^{3}_{i',j'})$ such that
							\begin{align*}
								&c^{3}_{i',j'}=
								\begin{cases}
									r_{j-1}-r_{j'-1}&\text{if}\  i'=1\ \text{and}\ j'\in\{1,\dots,n+1\}\setminus\{j\},\\
									-r_{j-1}&\text{if}\  i'=1\ \text{and}\ j'=j,\\
									\frac{r_{j-1}}{\beta^{\tmp}_{i'-1}-r_{j-1}} &\text{if}\ i'\in\{2,\dots,n+1\}\setminus\{i\}\ \text{and}\ j'=j\\
									0&\text{if}\ i'=i\ \text{and}\ j'\in\{1,\dots,n+1\}\setminus\{j\},\\
									\beta^{\tmp}_{i-1}&\text{if}\ i'=i\ \text{and}\ j'=j,\\
									\frac{-\beta^{\tmp}_{i'-1}[r_{j-1}-r_{j'-1}]}{[\beta^{\tmp}_{i'-1}-r_{j'-1}][\beta^{\tmp}_{i'-1}-r_{j-1}]} &\text{if}\ i'\in\{2,\dots,n+1\}\setminus\{i\}\\
									& \text{and}\ j'\in\{1,\dots,n+1\}\setminus\{j\}.
								\end{cases}
							\end{align*}
							Afterward, by extracting first $r_{j-1}-r_{j'-1}$ from  each column $j'\in\{2,\dots,n+1\}\setminus\{j\}$ {of $C^{3}$},  and then extracting $\frac{-1}{\beta^{\tmp}_{i'-1}-r_{j-1}}$ from each row $i'\in\{2,\dots,n+1\}\setminus\{i\}$ we rewrite \eqref{m3} as follows
							\begin{align*}
								\det[A_n]
								&={[-1]^{2n-1}\frac{ \varrho^{3}_{i}\varrho^{1}_{j}}{ \varrho^{4}_{i}\varrho^{5}_{i,j}}}\det\left(\begin{matrix}
									1 & \cdots  & 1 & -r_{j-1} &1 &\cdots  & 1\\
									\frac{\beta^{\tmp}_1}{\beta^{\tmp}_1-r_{0}} & \cdots& 	\frac{\beta^{\tmp}_1}{\beta^{\tmp}_1-r_{j-2}} & 	-r_{j-1} & 	\frac{\beta^{\tmp}_1}{\beta^{\tmp}_1-r_{j}} & \cdots & 	\frac{\beta^{\tmp}_1}{\beta^{\tmp}_1-r_{n}} \\
									\vdots & \ddots & \vdots & \vdots &\vdots & \ddots & \vdots \\
									\frac{\beta^{\tmp}_{i-2}}{\beta^{\tmp}_{i-2}-r_{0}} & \cdots& 	\frac{\beta^{\tmp}_{i-2}}{\beta^{\tmp}_{i-2}-r_{j-2}} & 	-r_{j-1} & 	\frac{\beta^{\tmp}_{i-2}}{\beta^{\tmp}_{i-2}-r_{j}} & \cdots & 	\frac{\beta^{\tmp}_{i-2}}{\beta^{\tmp}_{i-2}-r_{n}} \\
									0& \cdots& 	0 & 	\beta^{\tmp}_{i-1} & 	0& \cdots & 	0 \\
									\frac{\beta^{\tmp}_{i}}{\beta^{\tmp}_{i}-r_{0}} & \cdots& 	\frac{\beta^{\tmp}_{i}}{\beta^{\tmp}_{i}-r_{j-2}} & 	-r_{j-1} & 	\frac{\beta^{\tmp}_{i}}{\beta^{\tmp}_{i}-r_{j}} & \cdots & 	\frac{\beta^{\tmp}_{i}}{\beta^{\tmp}_{i}-r_{n}} \\
									\vdots & \ddots & \vdots & \vdots &\vdots & \ddots & \vdots \\
									\frac{\beta^{\tmp}_n}{\beta^{\tmp}_n-r_{0}} & \cdots& 	\frac{\beta^{\tmp}_n}{\beta^{\tmp}_n-r_{j-2}} & -r_{j-1} & 	\frac{\beta^{\tmp}_n}{\beta^{\tmp}_n-r_{j}} & \cdots & 	\frac{\beta^{\tmp}_n}{\beta^{\tmp}_n-r_{n}} 
								\end{matrix}\right)\notag\\
								&=[-1]^{2n-1}\beta^{\tmp}_{i-1}\frac{ \varrho^{3}_{i}\varrho^{1}_{j}}{ \varrho^{4}_{i}\varrho^{5}_{i,j}}\Cof_{i,j}[ A_{n}]=[-1]^{2n-1}\beta^{\tmp}_{i-1}[\beta^{\tmp}_{i-1}-r_{j-1}]\frac{ \varrho^{3}_{i}\varrho^{1}_{j}}{\varrho^{4}_{i}\varrho^{2}_{j}}\Cof_{i,j}[ A_{n}],		
							\end{align*}
							where ${\varrho^{5}_{i,j}\eqdef \prod_{\substack{k=2\\
											k\neq i}}^{n+1}[\beta^{\tmp}_{k-1}-r_{j-1}]}=\dfrac{\varrho^{2}_{j}}{\beta^{\tmp}_{i-1}-r_{j-1}}$. Notice that  $\varrho^{3}_{i}=[-1]^{n-i+1}\underbar{$\theta$}_{i}\,\bar{\theta}^{n}_{i}$, and $\varrho^{4}_{i}=[-1]^{n-i+2} \underbar{$\nu$}_{i}\,\bar{\nu}^{n}_{i}$,
							where $\underbar{$\theta$}_{i}$, $\bar{\theta}^{n}_{i}$,  $\underbar{$\nu$}_{i}$, and $\bar{\nu}^{n}_{i}$ are given by \eqref{co1}. From here and using \eqref{cof1} and \eqref{m2}, we get that $\det[A_{n}]
								=\beta^{\tmp}_{i-1}[\beta^{\tmp}_{i-1}-r_{j-1}]\frac{\underbar{$\theta$}_{i}\,\bar{\theta}^{n}_{i}\det[A_{n}]\Cof_{i,j}[ A_{n}]}{ \underbar{$\nu$}_{i}\,\bar{\nu}^{n}_{i}\Cof_{1,j}[A_{n}]}$. From this point, it becomes evident that \eqref{Cof2} is valid. 
								
								 {Let us now verify the validity of \eqref{Rn}. By \eqref{Cof2}, we have  
								\begin{equation}\label{m4}
									\sum_{i=2}^{n+1}\frac{\mathcal{M}^{n}_{i-1}}{\beta^{\tmp}_{i-1}-r_{j-1}}=\frac{1}{\Cof_{1,j}[A_{n}]}\sum_{i=1}^{n+1}\Cof_{i,j}[A_{n}]-1.
								\end{equation} 
								Using \eqref{Cof2} and the fact that  $\sum_{i=1}^{n+1}\Cof_{i,j}[A_{n}]=\det[A^{j}_{n}[I^{\trans}_{n+1}]]$, it follows immediately that \eqref{Rn} holds. }
						
							 {
								To conclude, we  prove by induction that \eqref{Cof5} is true.  For $n=1$,  \eqref{Cof5} is verified by direct calculation using \eqref{Rn}, \eqref{Cof3} and \eqref{Cof3.1}. Suppose that \eqref{Cof5} holds  for $n$. Using \eqref{m4} and taking into account the following identity, which can be verified similarly that \eqref{m4}, 
								\begin{equation*}
									1+\sum_{i=2}^{n+1}\frac{\mathcal{M}^{n}_{i-1}}{\beta^{\tmp}_{i-1}}=\frac{\mathcal{R}^{n}}{r_{j-1}}-\frac{r_{j-1}}{\Cof_{1,j}[A_{n}]}\sum_{i=2}^{n+1}\frac{\Cof_{i,j}[A_{n}]}{\beta^{\tmp}_{i-1}},
								\end{equation*} 
								it follows that
								\begin{align*}
									\frac{1}{\mathcal{R}^{n+1}}\left[1+\sum_{i=2}^{n+2}\frac{\mathcal{M}_{i-1}^{n+1}}{\beta_{i-1}^{\tmp}}\right]=&\frac{1}{r_{n+1}}-\frac{\sum_{i=2}^{n+2}\frac{\Cof_{i,n+2}[A_{n+1}]}{\beta^{\tmp}_{i-1}}}{\sum_{i=1}^{n+2}\Cof_{i, n+2}[A_{n+1}]}\notag\\
									&=\frac{1}{r_{n+1}}-\frac{1}{\beta_{n+1}^{\tmp}}\notag\\
									&\quad+\frac{\Cof_{1, n+2}[A_{n+1}]\bigg[1+\sum_{i=2}^{n+1}\bigg[\frac{\beta_{i-1}^{\tmp}-\beta_{n+1}^{\tmp}}{\beta_{i-1}^{\tmp}}\bigg]\frac{\Cof_{i,n+2}[A_{n+1}]}{\Cof_{1, n+2}[A_{n+1}]}\bigg]}{\beta_{n+1}^{\tmp}\sum_{i=1}^{n+2}\Cof_{i, n+2}[A_{n+1}]}.
									\end{align*}
									The last equality is obtained by adding and subtracting $1/\beta_{n+1}^{\tmp}$. Then, from here, by \eqref{Cof3}, \eqref{co1} and  \eqref{Cof2}, and noting that $\mathcal{R}^{n+1}=\mathcal{R}^{n}r_{n+1}/\beta_{n+1}^{\tmp}$, it can be verified that
									\begin{align*}
									\frac{1}{\mathcal{R}^{n+1}}\left[1+\sum_{i=2}^{n+2}\frac{\mathcal{M}_{i-1}^{n+1}}{\beta_{i-1}^{\tmp}}\right]&=\frac{1}{r_{n+1}}-\frac{1}{\beta_{n+1}^{\tmp}}+\frac{1}{\mathcal{R}^{n}}\left[1+\sum_{i=2}^{n+1}\Bigg[\frac{\beta_{i-1}^{\tmp}-\beta_{n+1}^{\tmp}}{\beta_{i}^{\tmp}}\Bigg]\frac{\Cof_{i,n+2}A_{n+1}}{\Cof_{1,n+2}A_{n+1}}\right],\notag\\
									&=\frac{1}{r_{n+1}}-\frac{1}{\beta_{n+1}^{\tmp}}+\frac{1}{\mathcal{R}^{n}}\left[1+\sum_{i=2}^{n+1}\frac{\mathcal{M}_{i-1}^{n}}{\beta_{i-1}^{\tmp}}\right].
								\end{align*}
								Hence, by the induction hypothesis, we conclude that\eqref{Cof5}holds for $n+1$.
							}\qed
						\end{proof}

						\noindent {\bf Acknowledgements} The authors thank the anonymous reviewer for their constructive comments and suggestions that substantially improved this paper.
						
						J. S. Mora Rodr\'iguez acknowledges the scholarship from the Faculty of Sciences, Universidad Nacional de Colombia, that supported his PhD studies.
						
				H. A. Moreno-Franco acknowledges the support received within the framework of the Basic Research Program at HSE University. \\
						
				\noindent 	{\bf Funding}  The research of J. Garz\'on was partially supported by the Project HERMES 58557.

						\section*{Declarations}

						\noindent	{\bf Conflict of interest} The authors have no competing interests to declare that are relevant to the content of this article.

\end{document}